
\documentclass[10pt,a4paper,twoside,reqno]{amsart}

\usepackage[a4paper,
left=1in,right=1in,top=1in,bottom=1.2in,%
footskip=.25in]{geometry}



\usepackage{bm} 
\usepackage{latexsym, amsmath, amstext, amssymb, amsfonts, amscd, bm, array, multirow, amsbsy, mathrsfs}
\usepackage{amsthm}
\usepackage{t1enc}
\usepackage[mathscr]{eucal}
\usepackage{indentfirst}
\usepackage{pb-diagram}
\usepackage{graphicx}
\usepackage{fancyhdr}
\usepackage{fancybox}
\usepackage{color}
\usepackage{tikz-cd}
\usetikzlibrary{arrows}
\usepackage[all]{xy}
\usepackage{tikz}
\usepackage{xparse}
\usepackage{mathtools}
\usetikzlibrary{matrix}
\usepackage{upgreek}

\newcommand{\arxiv}[1]{{\tt
    \href{http://www.arXiv.org/abs/#1}{arXiv:#1}}}

\usepackage{url}
\usepackage[sort&compress,comma]{natbib}
\bibpunct{[}{]}{,}{n}{}{,}
\theoremstyle{plain}
\newtheorem{thm}{Theorem}[section]

\newtheorem{prop}[thm]{Proposition}
\usepackage{hyperref}
\hypersetup{colorlinks=false,pdfborderstyle={/S/U/W 0}}

\newtheorem{lemma}[thm]{Lemma}
\newtheorem{cor}[thm]{Corollary}

\theoremstyle{definition}
\newtheorem{definition}[thm]{Definition}

\theoremstyle{remark}
\newtheorem{remark}[thm]{Remark}

\newtheorem*{ack}{Acknowledgements}

\newcommand{\Alt}{\mathrm{Alt}}

\newcommand{\End}{\mathrm{End}}
\newcommand{\Aut}{\mathrm{Aut}}

\DeclareFontFamily{U}{rsf}{}
\DeclareFontShape{U}{rsf}{m}{n}{<5> <6> rsfs5 <7> <8> <9> rsfs7 <10-> rsfs10}{}
\DeclareMathAlphabet\Scr{U}{rsf}{m}{n}

\newcommand{\KA}{K\"{a}hler-Atiyah~}

\def\Z{\mathbb{Z}}

\def\R{\mathbb{R}}

\def\GL{\mathrm{GL}}
\def\dd{\mathrm{d}}

\def\fS{\mathfrak{S}}
\def\P{\mathbb{P}}
\def\even{\mathrm{even}}

\def\L{\mathrm{L}}
\def\Crit{\mathrm{Crit}}

\newcommand{\be}{\begin{equation*}}
\newcommand{\ee}{\end{equation*}}
\newcommand{\ben}{\begin{equation}}
\newcommand{\een}{\end{equation}}
\newcommand{\beqa}{\begin{eqnarray*}}
	\newcommand{\eeqa}{\end{eqnarray*}}
\newcommand{\beqan}{\begin{eqnarray}}
\newcommand{\eeqan}{\end{eqnarray}}

\newcommand{\nn}{\nonumber}

\newcommand{\threepartdef}[6]
{
	\left\{
	\begin{array}{ll}
		#1 & \mbox{~} #2 \\
		#3 & \mbox{~} #4 \\
		#5 & \mbox{~} #6
	\end{array}
	\right .
}

 \newcommand{\id}{\mathrm{id}}
 \newcommand{\tr}{\mathrm{tr}}
 
\newcommand{\sign}{\mathrm{sign}}

\def\cC{{\mathcal C}}

\def\cB{\Scr B}

\def\Cl{\mathrm{Cl}}

\def\Spin{\mathrm{Spin}}

\def\Pin{\mathrm{Pin}}
\def\Spin{\mathrm{Spin}}

\def\SO{\mathrm{SO}}
\def\O{\mathrm{O}}

\def\cA{\mathcal{A}}
\def\cE{\mathcal{E}}

\def\cC{\mathcal{C}}

\def\G_2{\mathrm{G_2}}

\def\cO{\mathcal{O}}

\def \cS{\mathcal{S}}

\def\P{\mathbb{P}}

\def\Aut{\mathrm{Aut}}

\def\Im{\mathrm{Im}}

\def\G{\mathrm{G}}
\def\L{\mathrm{L}}

\def\R{\mathbb{R}}

\def\rS{\mathrm{S}}

\def\cW{\mathcal{W}}
\def\cL{\mathcal{L}}

\def\dd{\mathrm{d}}

\def\Gl{\mathrm{Gl}}

\def\Spin{\mathrm{Spin}}

\def\Met{\mathrm{Met}}

\usepackage{enumerate}

\newcolumntype{P}[1]{>{\centering\arraybackslash}p{#1}}

\usepackage[math]{anttor}
\usepackage[T1]{fontenc}


\begin{document}

\title[A functional for Spin(7) forms]{A functional for \texorpdfstring{Spin(7)}{Spin(7)} forms}

\author[Calin Lazaroiu]{Calin Lazaroiu} \address{Departamento de Matem\'aticas, Universidad UNED - Madrid, Reino de Espa\~na}
\email{clazaroiu@mat.uned.es, lcalin@theory.nipne.ro} 

\author[C. S. Shahbazi]{C. S. Shahbazi} \address{Departamento de Matem\'aticas, Universidad UNED - Madrid, Reino de Espa\~na}
\email{cshahbazi@mat.uned.es} 
\address{Fakult\"at f\"ur Mathematik, Universit\"at Hamburg, Bundesrepublik Deutschland.}
\email{carlos.shahbazi@uni-hamburg.de}
 
\begin{abstract}
We characterize the set of all conformal $\Spin(7)$ forms on an oriented and spin Riemannian eight-manifold $(M,g)$ as solutions to a homogeneous algebraic equation of degree two for the self-dual four-forms of $(M,g)$. When $M$ is compact, we use this result to construct a functional whose self-dual critical set is precisely the set of all $\Spin(7)$ structures on $M$. Furthermore, the natural coupling of this potential to the Einstein-Hilbert action gives a functional whose self-dual critical points are conformally Ricci-flat Spin(7) structures. Our proof relies on the computation of the square of an irreducible and chiral real spinor as a section of a bundle of real algebraic varieties sitting inside the \KA bundle of $(M,g)$.  
\end{abstract}
 
\maketitle

\setcounter{tocdepth}{1} 
\tableofcontents


\section{Introduction}


Ever since their appearance in Berger's list \cite{Berger} of irreducible and non-symmetric Riemannian holonomy groups, $\Spin(7)$ holonomy metrics \cite{Bonan,Bryant}, and more generally, $\Spin(7)$ topological structures \cite{Fernandez}, have played and continue to play an increasingly important role in Riemannian geometry \cite{Bazzoni,Foscolo,HarveyL,Ivanov,Joyce,Joyce2007,KarigiannisFlows,Merchan,SalamonWalpuski}. Although the simply-connected and 21-dimensional compact Lie group $\Spin(7)$ can be nicely defined as the universal cover of $\SO(7)$ and can be realized explicitly as a subgroup of the group of units of the Clifford algebra in eight Euclidean dimensions \cite{Varadarajan}, the algebraic analysis of $\Spin(7)$ subgroups of the general linear group $\Gl(8,\mathbb{R})$ is notoriously difficult \cite{KarigiannisDefs,KarigiannisFlows}. This note attempts to shed some light on the algebra of $\Spin(7)$ forms through the study of the \emph{square} \cite{CLS} of an irreducible and chiral real spinor in eight Euclidean dimensions, whose stabilizer under the natural action of $\Spin(8)$ is necessarily a $\Spin(7)$ subgroup of the latter. Using this framework, in Theorem \ref{thm:Spin7algebraic} we obtain an algebraic characterization of conformal $\Spin(7)$ forms as solutions of a homogeneous algebraic equation for self-dual four-forms on an eight-dimensional oriented Euclidean vector space $(V,h)$. Solutions to this equation are precisely those $\Spin(7)$ structures on $V$ which induce metrics homothetic to $h$. Elaborating on this result, we show in Theorem \ref{thm:PotentialVh} that conformal $\Spin(7)$ forms on $(V,h)$ can be equivalently characterized as the critical points of a homogeneous algebraic function defined on the space of self-dual four-forms, which is equal to zero on such forms. The same function can be interpreted as a function on pairs $(h,\Phi)$ consisting of Euclidean metrics $h$ and four-forms $\Phi$. As proven in Theorem \ref{thm:MetricPotentialVh}, doing so determines a function whose self-dual critical set consists of all Spin(7) structures on the underlying vector space. Given a compact and spin eight-manifold $M$,these algebraic functions globalize to integrated potentials: the first one, denoted by $\cW_g$ for a Riemannian metric $g$, is homogeneous of degree three and is defined on the Fr\'{e}chet space of smooth self-dual four-forms of on $(M,g)$, whereas the second one, denoted by $\cW$, is defined on the Fr\'{e}chet space of smooth Riemannian metrics and four-forms. The critical points of $\cW_g$, which belong to its zero set, are in one-to-one correspondence with those $\Spin(7)$ forms on $M$ which induce metrics conformal to $g$. On the other hand, the self-dual critical points of $\cW$ consist of all Spin(7) structures on $M$. In propostion \ref{prop:EinsteinSpin7} we consider the natural coupling the Einstein-Hilbert action to $\cW$, and we prove that its self-dual critical set consists of pairs $(g,\Phi)$ given by a Ricci flat metric $g$ and a Spin(7) metric $\Phi$ that induces a metric conformally related to $g$. We note that four-forms in eight dimensions are not \emph{stable} in the sense of \cite{SatoKimura}, and therefore Hitchin's potential for three-forms in six and seven dimensions \cite{HitchinI,HitchinII} does not immediately extend to eight dimensions. Our results provide an intrinsic variational characterization of conformal $\Spin(7)$ structures, to the best of our knowledge the first of its kind in the literature, and does not require using any privileged basis or local model. We hope this potential could potentially be useful for the variational understanding of Spin(7) structures of a special type, ideally in the context of evolution flows for $\Spin(7)$ structures \cite{KarigiannisFlows,DwivediHarm,DwivediGrad,Fadel,Lobeau} and Donaldson-Thomas theory for $\Spin(7)$ manifolds \cite{DonaldsonSegal,DonaldsonThomas}.

\begin{ack} 
CSS would like to thank D. Joyce for useful pointers on a preliminary version of the draft. CIL is supported by a Maria Zambrano award and by grant PN 23-210101/2023. He also acknowledges support by COST action CA22113 ("Fundamental Challenges in Theoretical Physics"). The work of CSS was supported by the Leonardo grant LEO22-2-2155 of the BBVA Foundation. 
\end{ack}


\section{Preliminaries on Spin(7) forms}


This section recalls the basic algebraic theory of $\Spin(7)$ structures and $\Spin(7)$ forms. Let $V$ be an oriented eight-dimensional real vector space and let $V^{\ast}$ be its dual, which we endow with the orientation induced from $V$. Denote by $\Gl(V)=\Aut(V)$ the Lie group of linear automorphisms of $V$ and by $\Gl_+(V)$ its identity component, which can be realized as the group of orientation-preserving linear transformations of $V$.  


\subsection{$\Spin(7)$ structures on an eight-dimensional vector space}


By definition, a {\em $\Spin(7)$ structure} on $V$ is a $\Spin(7)$ subgroup of the group $\GL_+(V)$. A well-known way to describe such a structure is to give a non-zero four-form on $V$, called the {\em Cayley form} of the structure, with certain properties. We start by recalling this description. Consider first the real vector space $\mathbb{R}^8$ and denote by $(e_1,\hdots,e_8)$ its standard basis, with dual basis $(e^1,\hdots,e^8)$. Let $h_0$ be the usual Euclidean metric on $\mathbb{R}^8$ and endow this space with its canonical orientation, for which the Euclidean volume form reads:
\be
\nu_{o} = e^1\wedge e^2 \wedge e^3 \wedge e^4 \wedge e^5 \wedge e^6 \wedge e^7 \wedge e^8\, .
\ee
The standard $\Spin(7)_+$ structure on $\R^8$ is described by the {\em canonical Cayley form} $\Phi_0 \in \wedge^4(\mathbb{R}^8)^{\ast}$, which is defined as follows \cite{Bonan,HarveyL,Joyce2007,SalamonWalpuski}:
\begin{eqnarray}
& \Phi_0 := e^1\wedge e^2\wedge e^3 \wedge e^4+e^1\wedge e^2\wedge e^5 \wedge e^6 + e^1\wedge e^2\wedge e^7 \wedge e^8 +e^1\wedge e^3\wedge e^5 \wedge e^7 \nn\\
& - e^1\wedge e^3\wedge e^6 \wedge e^8 - e^1\wedge e^4\wedge e^5 \wedge e^8 - e^1\wedge e^4\wedge e^6 \wedge e^7 + e^5\wedge e^6\wedge e^7 \wedge e^8 + e^3\wedge e^4\wedge e^7 \wedge e^8 \label{Phi_0}\\
& + e^3\wedge e^4\wedge e^5 \wedge e^6 + e^2\wedge e^4\wedge e^6 \wedge e^8 - e^2\wedge e^4\wedge e^5 \wedge e^7 - e^2\wedge e^3\wedge e^5 \wedge e^8 - e^2\wedge e^3\wedge e^6 \wedge e^7\nn
\end{eqnarray}
This four-form is self-dual with respect to the Euclidean metric $h_0$ and the canonical orientation of $\R^8$. Furthermore, we have $\Phi_0\wedge \Phi_0 = 14\nu_0$ and hence the square norm of $\Phi_0$ with respect to $h_0$ is $\vert\Phi_0\vert_{h_0}^2 = 14$. The general linear group $\Gl(8,\mathbb{R})$ acts naturally on $\wedge^4(\mathbb{R}^8)^{\ast}$ and consequently on $\Phi_0$. The stabilizer of $\Phi_0$ under this action is isomorphic with the Lie group $\Spin(7)$ \cite{Bonan,HarveyL, Joyce2007,SalamonWalpuski}. The stabilizer preserves $h_0$ and $\nu_0$ and hence is a subgroup of the special orthogonal group $\SO(8)\subset \GL(8,\mathbb{R})$ determined by $h_0$ and $\nu_0$. 

\begin{definition}
\label{def:Spin7forms} 
A {\em $\Spin(7)_+$ form} on $V$ is a four-form $\Phi \in \wedge^4 V^{\ast}$ for which there exists an orientation-preserving linear isomorphism $f\colon V\to \mathbb{R}^8$ satisfying $\Phi = f^{\ast}\Phi_0$, where $f^{\ast}\colon \wedge(\mathbb{R}^8)^{\ast} \to \wedge V^{\ast}$ denotes the pull-back of forms by $f$. A {\em $\Spin(7)_{-}$ form} is defined similarly but using an orientation-reversing linear isomorphism $f$. A \emph{$\Spin(7)$ form} on $V$ is either a $\Spin(7)_+$  or a $\Spin(7)_-$ form defined on $V$.
\end{definition}

\noindent 
In particular, pulling back the canonical Cayley form by any orientation-reversing linear automorphism of $\R^8$ (for example, the reflection in the hyperplane with equation $x^8=0$) produces a $\Spin(7)_-$ form. A four-form $\Phi\in \wedge^4 V$ is a $\Spin(7)_+$ form on $V$ if and only if there exists a positively-oriented basis $(\epsilon_1,\ldots, \epsilon_8)$ of $V$ in which $\Phi$ is given by the relation obtained from \eqref{Phi_0} by replacing $e_i$ with $\epsilon_i= f^{-1}(e_i)$ for all $i=1 ,\ldots,8$.  Every $\Spin(7)_+$ form $\Phi$ comes together with an Euclidean metric on $V$ given by:
\be
h_\Phi = f^{\ast} h_0=\sum_{i=1}^8 \epsilon^i\otimes \epsilon^i
\ee
which makes $\epsilon_1,\ldots, \epsilon_8$ into an orthonormal basis. Similar statements hold for $\Spin(7)_-$ forms, except that the relevant bases of $V$ are negatively oriented. If $\Phi\in \wedge^4 V$ is a $\Spin(7)$ form, then $\lambda \Phi$ is also a $\Spin(7)$ form for any {\em positive} $\lambda\in \R_{>0}$.
 
A $\Spin(7)$ form $\Phi$ determines its associated metric $h_\Phi$ algebraically as explained in \cite[Section 4.3]{KarigiannisDefs}; we say that $h_\Phi$ is {\em induced by $\Phi$}. Let $\nu_{h_\Phi}=f^\ast \nu_0$ be the corresponding volume form. A $\Spin(7)_+$ form $\Phi$ is self-dual with respect to $h_\Phi$ in the given orientation of $V$ and satisfies $\Phi\wedge \Phi=14 \nu_{h_\Phi}$ (as can be verified for $\Phi_0$ on $\R^8$ and pulling back by $f$), a condition which amounts to $|\Phi|_{h_\Phi}=\sqrt{14}$. On the other hand, a $\Spin(7)_-$ form $\Phi$ is anti-self-dual and satisfies $\Phi\wedge \Phi=-14 \nu_{h_\Phi}$, which again amounts to $|\Phi|_{h_\Phi}=\sqrt{14}$. Notice that a $\Spin(7)$ form can induce {\em any} Euclidean metric on $V$. The stabilizer of a four-form $\Phi\in \wedge^4 V^\ast$ inside $\GL_+(V)$ is isomorphic with the group $\Spin(7)$ if and only if there exists a sign factor $\pm$ such that $\pm \Phi$ is a $\Spin(7)$ form. There are exactly two conjugacy classes of $\Spin(7)$ subgroups in $\Gl_+(V)$ \cite{Varadarajan}. The subgroups belonging to one of these stabilize $\Spin(7)_+$ forms, while those belonging to the other stabilize $\Spin(7)_-$ forms defined on $(V,h)$. Moreover, two $\Spin(7)$ forms have the same stabilizer if and only if they differ by multiplication with a {\em positive} real number. This establishes a bijection between the conjugacy class of $\Spin(7)$ subgroups of $\GL_+(V)$ and the set of positive homothety classes of $\Spin(7)$ forms. The conjugacy classes of $\Spin(7)_+$ and $\Spin(7)_-$ forms inside $\GL(V)$ combine into a single conjugacy class within $\Gl(V)$; for example, the reflection of $V$ in any hyperplane contained in $V$ conjugates $\Spin(7)_+$ forms to $\Spin(7)_-$ forms inside $\GL(V)$. 

\begin{remark}
If $\Phi$ is a $\Spin(7)_{+}$ form on $V$, then $\Phi$ and $-\Phi$ have the same stabilizer, which is a $\Spin(7)_+$ subgroup of $\GL_+(V)$. However, $-\Phi$ is {\em not} a $\Spin(7)_{+}$ form although it can be written as $f^\ast(-\Phi_0)$ for some orientation-preserving isomorphism $f:V\rightarrow \R^8$. A simple continuity argument shows that the overall sign of the canonical Cayley form cannot be changed by acting with an element of $\GL_+(8,\mathbb{R})$ on $\R^8$. It is traditional to fix the overall sign of the canonical Cayley form in order to avoid double counting of $\Spin(7)$ subgroups of $\GL_+(V)$.
\end{remark}


\subsection{$\Spin(7)$ structures on an eight-dimensional Euclidean space}


Let $(V,h)$ be an oriented Euclidean vector space of dimension eight. Let $\O(V,h)\subset \GL(V)$ be the disconnected group of all orthogonal transformations of $(V,h)$ and let $\SO(V,h)\subset \O(V,h)$ be its identity component. These groups act naturally on $\wedge V^\ast$. The Hodge operator $\ast_h$ of $h$ squares to the identity on four-forms and hence gives a decomposition:
\be
\wedge^4 V^{\ast} = \wedge^4_{+} V^{\ast} \oplus \wedge^4_{-} V^{\ast} 
\ee
where $\wedge^4_{+} V^{\ast}$ and $\wedge^4_{-} V^{\ast}$ are the eigenspaces of self-dual and anti-self-dual four forms, which correspond to the eigenvalues $+1$ and $-1$. We denote by $\langle \cdot ,\cdot \rangle_h$ the scalar product induced by $h$ on the exterior algebra: 
\be
\wedge V^\ast:=\oplus_{k=0}^8 \wedge^k V^\ast
\ee
and by $|\, \cdot \, |_h$ the corresponding norm. For later convenience, we will often work with the dual oriented Euclidean space $(V^\ast, h^\ast)$ instead of $(V,h)$. By definition, a {\em metric $\Spin(7)$ structure} on $(V,h)$ is a $\Spin(7)$ subgroup of $\GL_+(V)$ which is contained in $\SO(V,h)$. There exist two conjugacy classes of such subgroups in $\SO(V,h)$, which correspond to the two conjugacy classes in $\GL_+(V)$; they are called respectively the conjugacy classes of metric $\Spin(7)_+$ and metric $\Spin(7)_-$ structures on $(V,h)$. They combine into a single conjugacy class of $\O(V,h)$. 

\begin{definition}	
\label{def:Spin7formsII}
A {\em metric} $\Spin(7)$ form on $(V,h)$ is a $\Spin(7)$ form $\Phi$ on $V$ which satisfies $h_\Phi=h$. A \emph{conformal} $\Spin(7)$ form on $(V,h)$ is a $\Spin(7)$ form $\Phi$ on $V$ which satisfies $h_\Phi=\alpha\, h$ for some $\alpha>0$.
\end{definition}

\noindent 
The positive number $\alpha:=\alpha_\Phi$ is uniquely determined by the conformal $\Spin(7)$ structure $\Phi$; we call it the {\em conformal constant} of $\Phi$ relative to $h$. The stabilizer of a $\Spin(7)$ form $\Phi$ on $V$ is a metric $\Spin(7)$ structure on $(V,h)$ if and only if $\Phi$ is a conformal $\Spin(7)$ form on $(V,h)$. The conformal constant $\alpha_\Phi$ of a conformal  $\Spin(7)$ form on $(V,h)$ can be expressed through the norm $|\Phi|_h$ as follows. 

\begin{lemma}
\label{lemma:conformalconstant}
Let $\Phi\in \wedge^4 V^{\ast}$ be a conformal $\Spin(7)$ form on $(V,h)$. Then we have: 
\be
\alpha_\Phi=14^{-\frac{1}{4}} \vert\Phi\vert^{\frac{1}{2}}_h
\ee
Thus $\Phi$ is a metric $\Spin(7)$ form on the oriented Euclidean space $(V,h_\Phi)$, where: 
\ben
\label{hPhi}
h_\Phi=14^{-\frac{1}{4}} \vert\Phi\vert^{\frac{1}{2}}_h h
\een
is the metric induced by $\Phi$. In particular, a conformal $\Spin(7)$ form on $(V,h)$ is a metric $\Spin(7)$ form if and only if $\vert\Phi\vert_h = \sqrt{14}$.
\end{lemma}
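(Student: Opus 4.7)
The plan is to exploit the conformal scaling behavior of the induced inner product on four-forms, combined with the normalization $|\Phi|_{h_\Phi}=\sqrt{14}$ that was established earlier in the excerpt for any $\Spin(7)$ form with respect to its induced metric.

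First, I would record the scaling identity: since $h_\Phi=\alpha_\Phi h$ on $V$, the dual metric on $V^\ast$ rescales as $h^\ast_\Phi=\alpha_\Phi^{-1}h^\ast$, and hence the induced inner product on $\wedge^4 V^\ast$ scales by $\alpha_\Phi^{-4}$. Applied to $\Phi$ itself, this gives
\ben
\label{eq:scaling}
|\Phi|_{h_\Phi}^2=\alpha_\Phi^{-4}\,|\Phi|_h^2\, .
\een
Then I would invoke the normalization $|\Phi|_{h_\Phi}=\sqrt{14}$, which holds for $\Spin(7)_+$ and $\Spin(7)_-$ forms alike as a consequence of $\Phi\wedge\Phi=\pm 14\nu_{h_\Phi}$ together with the pullback description $\Phi=f^\ast\Phi_0$. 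Substituting into \eqref{eq:scaling} yields $14=\alpha_\Phi^{-4}|\Phi|_h^2$, which, solved for the positive number $\alpha_\Phi$, produces $\alpha_\Phi=14^{-\frac{1}{4}}|\Phi|_h^{\frac{1}{2}}$.

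Plugging this back into $h_\Phi=\alpha_\Phi h$ gives the formula \eqref{hPhi}. The final equivalence is then immediate: a conformal $\Spin(7)$ form is a metric $\Spin(7)$ form precisely when $\alpha_\Phi=1$, which by the derived formula is equivalent to $|\Phi|_h=\sqrt{14}$.

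There is no real obstacle here; the only subtlety to be careful about is keeping track of the correct exponent in the conformal rescaling of the inner product on $k$-forms, since on four-forms the factor $\alpha^{-4}$ produces a fourth root when one solves for $\alpha$, and a half power of $|\Phi|_h$ in the final expression. Everything else is a direct application of facts already recorded in the preliminary discussion of $\Spin(7)$ forms.
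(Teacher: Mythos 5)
Your proof is correct and follows essentially the same route as the paper: express $|\Phi|_{h_\Phi}$ in terms of $|\Phi|_h$ via the conformal scaling of the induced inner product on four-forms, invoke the normalization $|\Phi|_{h_\Phi}=\sqrt{14}$, and solve for $\alpha_\Phi$. The only difference is that you spell out the scaling identity $|\Phi|_{\alpha h}^2=\alpha^{-4}|\Phi|_h^2$ explicitly, whereas the paper states $|\Phi|_{\alpha_\Phi h}=\alpha_\Phi^{-2}|\Phi|_h$ directly as one step of the computation.
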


\begin{proof}
Let  $\Phi\in \wedge^4 V^{\ast}$ be a conformal $\Spin(7)$ form on $(V,h)$ and let $f:V \rightarrow \R^8$ be a linear isomorphism such that $\Phi=f^\ast(\Phi_0)$. 
We have $h_\phi=f^{\ast}(h_0) = \alpha_\Phi h$, which implies:
\be
\sqrt{14} = |\Phi|_{h_\Phi}=|\Phi|_{\alpha_\Phi h} = (\alpha_\Phi)^{-2} |\Phi|_{h}  
\ee
This gives:
\be
\alpha_\Phi = 14^{-\frac{1}{4}} \vert\Phi\vert^{\frac{1}{2}}_h
\ee
Hence $h_\Phi$ equals \eqref{hPhi}. 
\end{proof}

\begin{lemma}
\label{lemma:PhiConfRay}
Let $\Phi\in \wedge^4 V^\ast$ be a four-form on $V$. Then $\Phi$ is a conformal $\Spin(7)$ form on $(V,h)$ if and only if there exists $\lambda\in \R_{>0}$ such that $\lambda \Phi$ 
\be
\frac{\sqrt{14}}{|\Phi|_h} \Phi\in \wedge^4_{+} V^{\ast}
\ee

\noindent
is a metric $\Spin(7)$ form on $(V,h)$. 
\end{lemma}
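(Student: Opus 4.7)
The plan is to deduce the lemma from Lemma \ref{lemma:conformalconstant} via a direct scaling calculation, observing that the combination $\sqrt{14}/|\Phi|_h$ is precisely $\alpha_\Phi^{-2}$, which is exactly the factor by which $\Phi$ must be rescaled in order to rescale the induced metric from $h_\Phi=\alpha_\Phi h$ back to $h$.

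First I would establish the forward direction. Assume $\Phi$ is a conformal $\Spin(7)$ form on $(V,h)$ and write $\Phi=f^\ast\Phi_0$ for an orientation-preserving linear isomorphism $f\colon V\to\R^8$. By Lemma \ref{lemma:conformalconstant}, $h_\Phi=f^\ast h_0 = \alpha_\Phi h$ with $\alpha_\Phi = 14^{-1/4}|\Phi|_h^{1/2}$. For any $\mu>0$, the rescaled isomorphism $g:=\mu f$ satisfies $g^\ast h_0=\mu^2 h_\Phi$ and $g^\ast\Phi_0=\mu^4\Phi$. Choosing $\mu=\alpha_\Phi^{-1/2}$ makes $g^\ast h_0=h$, and the corresponding $\Spin(7)_+$ form is
\[
\mu^4\Phi \;=\; \alpha_\Phi^{-2}\Phi \;=\; 14^{1/2}|\Phi|_h^{-1}\,\Phi \;=\; \frac{\sqrt{14}}{|\Phi|_h}\,\Phi,
\]
which is therefore a metric $\Spin(7)_+$ form on $(V,h)$. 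Since metric $\Spin(7)_+$ forms on $(V,h)$ are self-dual with respect to $h$, and self-duality is preserved under positive rescalings, it follows that $\Phi\in\wedge^4_+V^\ast$ as well.

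For the converse, if $\tfrac{\sqrt{14}}{|\Phi|_h}\Phi$ is a metric $\Spin(7)$ form on $(V,h)$, then $\Phi$ is a positive scalar multiple of a $\Spin(7)$ form, so by the scaling invariance recalled after Definition \ref{def:Spin7forms}, $\Phi$ is itself a $\Spin(7)$ form. Since multiplying a $\Spin(7)$ form by a positive number $\lambda$ multiplies its induced metric by $\lambda^{1/2}$ (by pulling back via $\lambda^{1/4}f$), the metric induced by $\Phi$ is a positive rescaling of $h$, hence $\Phi$ is conformal on $(V,h)$.

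The only delicate point is keeping track of the scaling weights: the induced metric $h_\Phi=f^\ast h_0$ depends quadratically on $f$ while $\Phi=f^\ast\Phi_0$ depends quartically, so under $\Phi\mapsto\lambda\Phi$ one has $h_{\lambda\Phi}=\lambda^{1/2}h_\Phi$. Once this scaling relation is recorded, everything else is a direct substitution, which is why I expect no essential obstacle beyond this bookkeeping; the numerical factor $\sqrt{14}$ arises purely from the normalization $|\Phi_0|_{h_0}^2=14$ already fixed in the preliminaries.
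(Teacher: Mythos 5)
Your proof is correct and follows essentially the same route as the paper's: both rest on the scaling identity for the pullback isomorphism (you write $g=\mu f$, the paper writes $\lambda^{1/4}f$, which is the same reparametrization), both use $h_{\lambda\Phi}=\lambda^{1/2}h_\Phi$, and both plug in the formula $\alpha_\Phi=14^{-1/4}|\Phi|_h^{1/2}$ from Lemma~\ref{lemma:conformalconstant} to extract the numerical factor $\sqrt{14}/|\Phi|_h$. The one cosmetic difference is that you spell out the converse implication explicitly (the paper leaves it implicit), which is a small improvement in completeness rather than a different method. A minor caveat: your remark that ``$\Phi\in\wedge^4_+V^\ast$'' holds only in the $\Spin(7)_+$ case; for a conformal $\Spin(7)_-$ form $\Phi$ would be anti-self-dual, so the self-duality observation (which also appears, slightly imprecisely, in the lemma's displayed formula) should be tagged as specific to the positive case, or the Hodge star taken with the orientation induced by $\Phi$.
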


\begin{proof}
If $\Phi\in \wedge^4 V^{\ast}$ is a $\Spin(7)$ form on $V$, then so is $\lambda \Phi$ for every $\lambda \in \R_{>0}$. Indeed,  if $\Phi = f^{\ast} \Phi_0$ then $\lambda\Phi = (\lambda^{\frac{1}{4}} f)^{\ast} \Phi_0$. If $\Phi$ is a {\em conformal} $\Spin(7)$ form on $(V,h)$ then $h_\Phi=f^\ast(h_0)$ is proportional to $h$ and hence $h_{\lambda\Phi}=(\lambda^{\frac{1}{4}} f)^\ast(h_0)=\lambda^{1/2} h_\Phi$ is also proportional to $h$, which shows that $\lambda\Phi$ is a conformal $\Spin(7)$ form. The latter is a metric $\Spin(7)$ form on $(V,h)$ when $h_{\lambda\Phi}=h$, which happens if and only if:
\begin{equation*}
\lambda=\left(\frac{h}{h_\Phi}\right)^2=\frac{\sqrt{14}}{|\Phi|_h}
\end{equation*}

\noindent
where in the last equality we used Lemma \ref{lemma:conformalconstant}.
\end{proof}

\noindent 
Lemma \ref{lemma:PhiConfRay} implies: 

\begin{cor}
\label{cor:conformal}
The set of conformal $\Spin(7)_\pm$ forms on $(V,h)$ is in bijection with the set of positive homothety classes of metric $\Spin(7)_\pm$ structures on $(V,h)$. 
\end{cor}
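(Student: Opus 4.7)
The plan is to exploit the correspondence, already recorded in the preliminary discussion, between positive homothety classes of $\Spin(7)_\pm$ forms on $V$ and $\Spin(7)_\pm$ subgroups of $\GL_+(V)$, via the stabilizer map $[\Phi]\mapsto \Stab(\Phi)$. The corollary amounts to showing that this bijection restricts well to the conformal setting on $(V,h)$, with metric structures on one side and positive homothety classes of conformal forms (equivalently, unique metric form representatives obtained through Lemma \ref{lemma:PhiConfRay}) on the other.

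First I would verify the forward implication: if $\Phi$ is a conformal $\Spin(7)_\pm$ form on $(V,h)$, then $\Stab(\Phi)$ is a metric $\Spin(7)_\pm$ structure on $(V,h)$. Indeed, every element of $\Stab(\Phi)$ preserves the algebraically determined metric $h_\Phi = \alpha_\Phi h$ and hence preserves $h$ itself; together with $\Stab(\Phi)\subseteq \GL_+(V)$, this places $\Stab(\Phi)$ inside $\SO(V,h)$, as required. By Lemma \ref{lemma:PhiConfRay}, each positive homothety class of conformal $\Spin(7)_\pm$ forms contains a canonical metric representative on $(V,h)$, so that representatives on the two sides can be matched unambiguously.

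The converse is the main substantive step. Given a metric $\Spin(7)_\pm$ structure $G\subseteq \SO(V,h)$, viewed as a $\Spin(7)_\pm$ subgroup of $\GL_+(V)$, the preliminary bijection yields a $\Spin(7)_\pm$ form $\Phi$, unique up to positive rescaling, with $\Stab(\Phi)=G$. What remains is to show that $\Phi$ is conformal on $(V,h)$, i.e., that its induced metric $h_\Phi$ is a positive multiple of $h$. This is the hard part. My approach is to invoke the absolute irreducibility of the eight-dimensional real spin representation of $\Spin(7)$: by Schur's lemma, the space of $G$-invariant symmetric bilinear forms on $V$ is one-dimensional, and since both $h$ and $h_\Phi$ are positive-definite $G$-invariant inner products, they must agree up to a positive scalar. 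The explicit proportionality constant is then pinned down by Lemma \ref{lemma:conformalconstant}, and Lemma \ref{lemma:PhiConfRay} exhibits a canonical metric $\Spin(7)_\pm$ form representative in the positive ray of $\Phi$. Combining both directions yields the claimed bijection; everything apart from the Schur's lemma input is formal bookkeeping following directly from Lemma \ref{lemma:PhiConfRay} and the stabilizer correspondence established earlier in the section.
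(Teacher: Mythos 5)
Your proof is correct, and it fills in a genuine gap that the paper glosses over. The paper's own ``proof'' consists only of the phrase ``Lemma~\ref{lemma:PhiConfRay} implies,'' which establishes that conformal $\Spin(7)_\pm$ forms are exactly the positive rescalings of metric $\Spin(7)_\pm$ forms, but it leaves the correspondence between metric forms and metric $\Spin(7)_\pm$ structures to the assertion (stated earlier, in the paragraph following Definition~\ref{def:Spin7formsII}, without proof) that ``the stabilizer of a $\Spin(7)$ form $\Phi$ on $V$ is a metric $\Spin(7)$ structure on $(V,h)$ if and only if $\Phi$ is a conformal $\Spin(7)$ form on $(V,h)$.'' The nontrivial direction of that assertion --- that a form whose $\GL_+(V)$-stabilizer happens to land in $\SO(V,h)$ must have $h_\Phi$ proportional to $h$ --- is exactly the step you call ``the hard part,'' and the paper neither proves it nor clearly signposts a reference for it at this point; it implicitly relies on the transitivity of $\SO(V,h)$ on the set $\fS_\pm(V,h)$ of metric $\Spin(7)_\pm$ structures (cited from \cite{Varadarajan} only later, around \eqref{cAmetric}), together with pulling back the standard Cayley form. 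Your route is genuinely different and more structural: instead of invoking transitivity, you observe that the eight-dimensional real (half-)spin representation of $\Spin(7)$ is absolutely irreducible, so by Schur's lemma the space of $G$-invariant symmetric bilinear forms on $V$ is one-dimensional, and positivity forces $h_\Phi$ to be a \emph{positive} multiple of $h$. This makes the argument self-contained (modulo the standard fact of absolute irreducibility), whereas the paper's implicit route is a transitivity-plus-model-form argument. Both are valid; yours also has the merit of pinpointing the precise representation-theoretic input that makes the statement work, rather than reducing to an orbit computation. One small bookkeeping remark: the corollary as literally printed speaks of ``positive homothety classes of metric $\Spin(7)_\pm$ \emph{structures},'' which is an awkward phrasing since homothety acts trivially on subgroups; the intended content, which your proof correctly targets, is that conformal $\Spin(7)_\pm$ forms correspond bijectively to pairs consisting of a positive scale and a metric $\Spin(7)_\pm$ structure (equivalently, positive homothety classes of conformal forms biject with metric structures).
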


\noindent
The stabilizers of both metric and conformal $\Spin(7)_\pm$ forms on $(V,h)$ are the metric $\Spin(7)_\pm$ structures on $(V,h)$. Each $\Spin(7)_\pm$ subgroup of $\SO(V,h)$ corresponds to the positive homothety class of a metric $\Spin(7)_\pm$ form, which consists of conformal  $\Spin(7)_\pm$ forms. In particular, two conformal $\Spin(7)_\pm$ forms on $(V,h)$ have the same stabilizer inside $\SO(V,h)$ if and only if they differ through multiplication by a constant. Two metric $\Spin(7)_\pm$ forms on $(V,h)$ have the same stabilizer inside $\SO(V,h)$ if and only if they coincide. 

We denote by $\fS_\pm(V,h)$ the conjugacy classes of metric $\Spin(7)_\pm$ structures, by $\cA_\pm(V,h)\subset \wedge^4_+ V^\ast$ the set of {\em metric} $\Spin(7)_\pm$ forms and by $\cC_\pm(V,h)\subset \wedge^4_+ V^\ast$ the set of {\em conformal} $\Spin(7)_\pm$ forms on $(V,h)$. The special orthogonal group $\SO(V,h)\simeq \SO(8)$ acts transitively on $\fS_\pm(V,h)$ (see \cite{Varadarajan}). Hence fixing a metric $\Spin(7)_\pm$ structure gives a bijection:
\ben
\label{cAmetric}
\fS_\pm(V,h) \simeq \SO(8)/\Spin(7) \simeq \mathbb{RP}^7
\een
Since metric $\Spin(7)_\pm$ structures are in bijection with metric $\Spin(7)_\pm$ forms, \eqref{cAmetric} gives a bijection: 
\be
\cA_\pm(V,h)\simeq \mathbb{RP}^7
\ee
Let: 
\ben
\label{cC}
C_\pm(V,h):=\{\lambda \Phi\,\, \vert\,\, \lambda\in \R_{\geq 0} \,\, \& \,\,\Phi\in \cA_\pm(V,h)\}\subset \wedge^4_+ V^\ast 
\een
be the real affine cone over $\cA_\pm(V,h)$, which is diffeomorphic with $\R^8/\Z_2$. Then Corollary \ref{cor:conformal} shows that the set $\cC_\pm(V,h)$ of conformal $\Spin(7)_\pm$ forms on $(V,h)$ identifies with the pointed cone obtained by removing the origin from $C_\pm(V,h)$.


\section{Spinor polyform squares on an eight-dimensional oriented Euclidean space}
\label{sec:spinpoly}


In this section, we use an algebraic characterization of the square of a real and irreducible chiral spinor in eight Euclidean dimensions, which follows from the general theory developed in \cite{CLS}, to obtain an intrinsic quadratic description of $\Spin(7)_\pm$ forms on an oriented Euclidean space $(V,h)$ of dimension eight. We use this result to construct a homogeneous algebraic potential defined on the space $\wedge^4_+ V^\ast$ which vanishes on $\Spin(7)_+$ forms. We begin with a brief introduction to the K\"{a}hler-Atiyah model of the Clifford algebra $\Cl(V^{\ast},h^{\ast})$. This formalism was used extensively in references \cite{ga1,ga2,gf,fol1,fol2}, where it was shown to play an important role in understanding certain geometric problems that arise in string and M theory. This inspired both \cite{CLS} as well as the potential that we construct below \cite{gaptalk}. 


\subsection{The K\"ahler-Atiyah algebra}


As mentioned earlier, it is useful to consider the dual quadratic space $(V^{\ast},h^{\ast})$ of the eight-dimensional oriented Euclidean space $(V,h)$, which we endow with the orientation induced from $V$. We denote by $\langle \cdot ,\cdot \rangle_h$ the scalar product induced by $h$ on the space $\wedge V^\ast$ of polyforms defined on $V$ and by $|\, \cdot \, |_h$ the corresponding norm. Notice that $|\nu_h|_h= 1$. Let $\Cl(V^{\ast},h^{\ast})$ be the real Clifford algebra of $(V^{\ast},h^{\ast})$, which we identify with the {\em K\"{a}hler-Atiyah algebra} $(\wedge V^{\ast}, \diamond_h)$ of $(V,h)$. The latter is an associative and unital $\R$-algebra obtained by equipping $\wedge V^{\ast}$ with the \emph{geometric product} $\diamond_h\colon \wedge V^{\ast}\times \wedge V^{\ast}\to \mathbb{R}$. The geometric product is an $h$-dependent deformation  of the wedge product obtained by the $\R$-linear and associative extension of the following expression:
\be
\alpha\diamond_h \beta = \alpha \wedge \beta + \iota_{\alpha}\beta~,
\ee
where $\alpha \in V^{\ast}$ is a one-form, $\beta \in \wedge V^{\ast}$ is any polyform and $\iota_{\alpha}\beta$ denotes the contraction of $\beta$ with the vector $\alpha^{\sharp_h}\in V$ which is $h$-dual to the one-form $\alpha$. The unit of the algebra $(\wedge V^{\ast}, \diamond_h)$ is the unit $1\in \R=\wedge^0 V^{\ast}\subset \wedge V^{\ast}$. This algebra has a natural $\Z_2$-grading given by the direct sum decomposition of $\wedge V^\ast$ into the subspaces:
\be
\wedge^{\mathrm{even}} V^\ast :=\oplus_{k=0}^4 \wedge^{2k} V^\ast~~,~~\wedge^{\mathrm{odd}} V^\ast:=\oplus_{k=0}^4 \wedge^{2k+1} V^\ast 
\ee
As an associative and unital algebra, the K\"ahler-Atiyah algebra is isomorphic to the Clifford algebra $\Cl(V^{\ast},h^{\ast})$ through the $h$-dependent {\em Chevalley-Riesz isomorphism} (see \cite{ga1,gf,CLS}), which we denote by: 
\ben
\label{eq:CR}
\Psi_h \colon \Cl(V^{\ast}, h^{\ast})\stackrel{\sim}{\to} (\wedge V^{\ast},\diamond_h) 
\een
We denote by $\pi$ the {\em signature automorphism} of the \KA algebra, which is defined as the unique unital algebra automorphism which acts as minus the identity on $V^{\ast}\subset \wedge V^{\ast}$ and by $\tau$ the {\em reversion anti-automorphism}, defined as the unique unital algebra anti-automorphism which acts as the identity on $V^{\ast}$. For $\alpha\in \wedge^k V^\ast$, we have:
\ben
\label{pitaurels}
\pi(\alpha)=(-1)^k \alpha \quad,\quad \tau(\alpha)=(-1)^{\frac{k(k-1)}{2}}\alpha 
\een

\noindent
The volume form $\nu_h$ of $h$ satisfies $\nu_h\diamond_h \nu_h=1$ as well as: 
\be
\nu_h \diamond_h \alpha=\pi(\alpha)\diamond_h \nu_h \quad \forall \alpha\in \wedge^\ast V^\ast\, . 
\ee
Moreover, we have \cite{ga1,gf,CLS}: 
\be
\ast_h\alpha=\tau(\alpha)\diamond_h \nu_h \quad \forall \alpha\in \wedge^\ast V^\ast\, .
\ee
In particular, a 4-form $\Phi\in \wedge^4 V^\ast$ is self-dual if and only if it satisfies: 
\be
\nu_h\diamond_h \Phi= \Phi\, .
\ee
The geometric product $\diamond_h$ admits an expansion as a sum of {\em generalized products}, which we describe next. 

\begin{definition}
The {\em contracted wedge products} are the $\R$-bilinear maps:
\be
\wedge_k: \wedge V^\ast \times \wedge V^\ast \rightarrow \wedge V^\ast\, , \quad \forall k\in \{0,\ldots, 8\}
\ee
defined by $\wedge_0=\wedge$ and the recursion relation: 
\be
\alpha\wedge_{k+1} \beta= h^{ab} \iota_{e_a}\alpha \wedge_k \iota_{e_b} \beta \quad \forall\alpha,\beta\in \wedge V^\ast \quad \forall  k\in \{0,\ldots, 7\}
\ee
where $(e_1,\ldots, e_8)$ is an arbitrary basis of $V$, $h^{ab}:=h^\ast(e^a,e^b)$ and we use implicit summation over repeated indices.  
\end{definition}

\noindent It is easy to check that $\wedge_k$ does not depend on the basis used in the definition. 

\begin{definition}
For any $k\in \{0,\ldots,8\}$, the $k$-th {\em generalized product} 
\be
\Delta^h_k\colon \wedge V^{\ast}\times \wedge V^{\ast} \rightarrow \wedge V^{\ast}
\ee
is the $\R$-bilinear map defined through:
\be
\Delta^h_k  = \frac{1}{k!} \wedge_k~ 
\ee
\end{definition}

\noindent For any polyforms $\alpha,\beta\in \wedge V^\ast$, we have $\alpha\Delta^h_0 \beta=\alpha\wedge \beta$.  For any basis $(e_1,\ldots, e_8)$ of $(V,h)$, we have: 
\ben
\label{DeltaExp}
\alpha\Delta_k^h\beta=\frac{1}{k!} h^{a_1b_1}\ldots h^{a_k b_k}(\iota_{e_{a_1}}\ldots \iota_{e_{a_k}} \alpha) \wedge (\iota_{e_{b_1}}\ldots \iota_{e_{b_k}}\beta) 
\een
Notice that $\alpha\Delta_k^h \beta\in \wedge^{p+q-2k} V^\ast$ if $\alpha\in \wedge^p V^\ast$ and $\beta\in \wedge^q V^\ast$. This vanishes if $k>\min(p,q)$ and in particular if $2k>p+q$. Moreover, we have (this follows by direct computation or from \cite[eq. (2.13)]{ga1}): 
\ben
\label{gradedasym}
\alpha\Delta_k^h \beta=(-1)^{pq+k} \beta\Delta_k^h \alpha\quad \forall \alpha\in \wedge^p V^\ast~~\forall \beta\in \wedge^q V^\ast 
\een
Hence $\Delta_k^h$ is graded-symmetric (with respect to the $\Z_2$-grading of the \KA algebra) when $k$ is even and graded-antisymmetric when $k$ is odd. An observation which will is relevant for this paper is that the four-form $\Phi\Delta_2^h\Phi$ is self-dual for any 
self-dual 4-form $\Phi\in \wedge^4_+ V^\ast$, as follows from \cite[eq. (A.3)]{fol1}: 
\ben
\label{PhiDeltaPhisd}
\Phi\in \wedge^4_+ V^\ast\Longrightarrow \Phi\Delta_2^h\Phi\in \wedge^4_+ V^\ast~.
\een
We refer the reader to \cite{ga1,gf,CLS} and to \cite[Appendix A]{fol1} for more detail and further properties of generalized products. The following result expresses the geometric product in terms of generalized products, a description that turns out to be very useful in applications. 

\begin{prop} \rm{\cite{ga1,gf,CLS}}
\label{prop:genprodexp}
Let $(e_1,\ldots, e_8)$ be any basis of $V$. For any pure rank forms $\alpha\in \wedge^p V^\ast$ and $\beta\in \wedge^q V^\ast$, we have: 
\ben
\label{DiamondExp}
\alpha\diamond_h\beta=\sum_{k=0}^d (-1)^{\left[\frac{k+1}{2}\right]+ k p} \alpha\Delta_k^h\beta
\een
where $\left[\, \cdot \, \right]$ denotes the integer part. This gives the expansion of the left hand side into pure rank forms. 
\end{prop}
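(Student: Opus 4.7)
The plan is to prove formula \eqref{DiamondExp} by induction on the pure rank $p$ of $\alpha$, after reducing by $\R$-bilinearity to the case in which $\alpha$ and $\beta$ are decomposable wedge products of dual vectors from a fixed $h$-orthonormal basis $(e_1,\ldots,e_8)$ of $V$, with corresponding dual orthonormal basis $(e^1,\ldots,e^8)$ of $V^\ast$. Under this reduction we can write $\alpha=e^I:=e^{i_1}\wedge\cdots\wedge e^{i_p}$ and $\beta=e^J$ for strictly ordered multi-indices $I$ and $J$. In orthonormal coordinates, $\iota_{e_a}e^b=\delta^{b}_a$, which makes every contraction in the expansion \eqref{DeltaExp} trivial to carry out.

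For the base case $p=0$, both sides reduce to the wedge product and the only surviving term on the right is the $k=0$ one, whose sign is $(-1)^{[1/2]+0}=1$. For $p=1$, the definition of $\diamond_h$ gives $\alpha\diamond_h\beta=\alpha\wedge\beta+\iota_{\alpha^{\sharp_h}}\beta=\alpha\Delta_0^h\beta+\alpha\Delta_1^h\beta$, and the prefactors on the right hand side of \eqref{DiamondExp} are indeed $(-1)^{[1/2]+0}=1$ and $(-1)^{[1]+1}=1$; all higher generalized products vanish because $\alpha$ has rank one.

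For the inductive step, pick an index $i_1\notin I$-contraction sense; more precisely, decompose $\alpha=e^{i_1}\wedge\alpha'$ with $\alpha'=e^{i_2}\wedge\cdots\wedge e^{i_p}\in\wedge^{p-1}V^\ast$. Since $e^{i_1}$ is $h^\ast$-orthogonal to every factor of $\alpha'$, the base case gives $e^{i_1}\diamond_h\alpha'=e^{i_1}\wedge\alpha'=\alpha$, and associativity of the geometric product yields
\[
\alpha\diamond_h\beta=e^{i_1}\diamond_h(\alpha'\diamond_h\beta)=\sum_{k=0}^{d}(-1)^{[(k+1)/2]+k(p-1)}\,e^{i_1}\diamond_h(\alpha'\Delta_k^h\beta).
\]
Applying the base-case identity to each summand splits it as
\[
e^{i_1}\wedge(\alpha'\Delta_k^h\beta)+\iota_{(e^{i_1})^{\sharp_h}}(\alpha'\Delta_k^h\beta).
\]
Using the graded Leibniz rule for $\iota_{(e^{i_1})^{\sharp_h}}$ on the expansion \eqref{DeltaExp} of $\alpha'\Delta_k^h\beta$, together with the fact that $e^{i_1}$ is orthogonal to the factors of $\alpha'$, one shows that $e^{i_1}\wedge(\alpha'\Delta_k^h\beta)$ is exactly $\alpha\Delta_k^h\beta$, while the contraction piece, after a combinatorial reshuffling that matches one of the new $h^{ab}$ factors with $(e^{i_1})^{\sharp_h}$, produces $\alpha\Delta_{k+1}^h\beta$ up to sign.

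The main obstacle, and the only nontrivial content of the argument, is tracking the overall sign. After regrouping terms of equal rank, the coefficient of $\alpha\Delta_j^h\beta$ on the right hand side is a sum of two contributions: one from the wedge split of the $k=j$ term, contributing $(-1)^{[(j+1)/2]+j(p-1)}$, and one from the contraction split of the $k=j-1$ term, which picks up an extra sign from commuting $\iota_{(e^{i_1})^{\sharp_h}}$ past $(p-1-(j-1))$ factors in the wedge together with the $1/(j-1)!$ versus $1/j!$ combinatorial factor, yielding $(-1)^{[j/2]+(j-1)(p-1)+(p-j)}$. Direct manipulation using the elementary identity $[(j+1)/2]+[j/2]=j$ and parity of $p-j$ shows that these two contributions combine precisely into $(-1)^{[(j+1)/2]+jp}$, which is the claimed sign. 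This completes the induction and yields \eqref{DiamondExp} as a decomposition into pure rank summands.
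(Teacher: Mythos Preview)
The paper does not actually prove this proposition; it is quoted from the cited references and only used as a tool. So there is no paper proof to compare against, and I will simply assess your argument on its own.

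Your overall strategy---reduce to decomposable $\alpha=e^{i_1}\wedge\alpha'$ in an orthonormal basis, use $\alpha=e^{i_1}\diamond_h\alpha'$, and induct on $p$---is sound and is essentially how such identities are proved in the references. The execution of the inductive step, however, contains genuine errors.

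Your claim that $e^{i_1}\wedge(\alpha'\Delta_k^h\beta)=\alpha\Delta_k^h\beta$ is false. A counterexample: take $\alpha=e^1\wedge e^2$, $\alpha'=e^2$, $\beta=e^1$, $k=1$. Then $\alpha'\Delta_1^h\beta=\langle e^2,e^1\rangle_h=0$, so the left side vanishes; but $\alpha\Delta_1^h\beta=\iota_{e_1}(e^1\wedge e^2)=e^2\neq 0$. The point is that $\alpha\Delta_k^h\beta$ contains terms in which one of the $k$ contractions hits the $e^{i_1}$ factor of $\alpha$, and these are absent from $e^{i_1}\wedge(\alpha'\Delta_k^h\beta)$. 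There is also a missing sign $(-1)^k$ on the part that does match. The companion claim that $\iota_{e_{i_1}}(\alpha'\Delta_k^h\beta)$ equals $\pm\,\alpha\Delta_{k+1}^h\beta$ is likewise false; a short computation gives instead $\iota_{e_{i_1}}(\alpha'\Delta_k^h\beta)=(-1)^{p-1}\alpha'\Delta_k^h(\iota_{e_{i_1}}\beta)$, which is not $\alpha\Delta_{k+1}^h\beta$.

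Your final sign argument confirms something is wrong: you assert that the coefficient of $\alpha\Delta_j^h\beta$ is a \emph{sum} of two $\pm 1$ contributions equalling $(-1)^{[(j+1)/2]+jp}$. A sum of two signs is $0$ or $\pm 2$, never $\pm 1$, so this cannot hold as written.

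The repair is to prove the correct recursion
\[
\alpha\Delta_k^h\beta=(-1)^k\,e^{i_1}\wedge(\alpha'\Delta_k^h\beta)+(-1)^{p-1}\,\iota_{e_{i_1}}(\alpha'\Delta_{k-1}^h\beta),
\]
which follows by splitting the sum \eqref{DeltaExp} according to whether one of the contracted indices equals $i_1$. Then the terms containing $e^{i_1}$ as a wedge factor and those annihilated by $e^{i_1}\wedge(\cdot)$ are linearly independent, so you match coefficients \emph{separately} rather than summing them: one checks $c_j^{p-1}=(-1)^j c_j^{p}$ and $c_k^{p-1}=(-1)^{p-1}c_{k+1}^{p}$ for $c_k^p:=(-1)^{[(k+1)/2]+kp}$, both of which are elementary parity identities. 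With this correction your inductive scheme goes through.
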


\noindent
It is easy to check that the geometric product $\diamond_h:\wedge V^\ast\times \wedge V^\ast\rightarrow \wedge V^\ast$ is equivariant under the natural action of $\SO(V,h)$ on its domain and co-domain, a fact which also follows from the fact that the Kahler-Atiyah algebra is a model of the Clifford algebra $\Cl(V^\ast,h^\ast)$. The expansion \eqref{DiamondExp} implies that the generalized products are also equivariant under these actions. The following endomorphism of $\wedge^4 V^\ast$ will be used later on.

\begin{definition}
\label{def:Lambda}
For any $\Phi\in \wedge^4 V^\ast$, define $\Lambda^h_\Phi\in \End(\wedge^4 V^\ast)$ through:
\ben
\label{Lambda}
\Lambda^h_\Phi(\omega):=2 \Phi \Delta_2^h \omega\quad\forall \omega\in \wedge^4 V^\ast 
\een
\end{definition}

\begin{remark}
\label{rem:Lambda}
In an arbitrary basis $(e^1,\ldots, e^8)$ of $\wedge^4 V^\ast$, we have: 
\ben
\label{PhiDeltaomega}
\Phi\Delta_2^h \omega=\frac{1}{8}\Phi_{ijmn}\omega^{mn}_{\,\,\,\,\,\,\,\,\,\, kl} e^i\wedge e^j\wedge e^k\wedge e^l\quad\forall \Phi,\omega\in \wedge^4 V^\ast 
\een
where: 
\be
\omega^{m n}_{\,\,\,\,\,\,\,\,\, kl}=h^{mp} h^{nq} \omega_{pqkl}
\ee
and we expanded $\Phi$ and $\omega$ as: 
\be
\Phi=\frac{1}{4!}\Phi_{ijkl} e^i\wedge e^j\wedge e^k\wedge e^k~~,~~\omega=\frac{1}{4!}\omega_{ijkl} e^i\wedge e^j\wedge e^k\wedge e^k 
\ee
Here use the so-called "det convention" for the wedge product of forms. Hence the wedge product of $\alpha\in \wedge^p V^\ast$ and $\beta\in \wedge^q V^\ast$ is defined as in \cite[Chap. 7]{Spivak}:
\be
\alpha\wedge\beta=\frac{(p+q)!}{p!q!}\Alt(\alpha \otimes \beta)
\ee
where $\Alt$ is the projector onto the subspace of totally antisymmetric tensors. Using \eqref{PhiDeltaomega}, it is easy to check that the operator $\Lambda^h_\Phi$ defined in \eqref{Lambda} coincides with the operator denoted by $\Lambda_\Phi$ in \cite[Definition 2.7]{KarigiannisFlows} and \cite[Definition 2.5]{DwivediGrad}. 
\end{remark}


\subsection{The \KA trace}


Let us fix an irreducible representation $\gamma\colon \Cl(V^{\ast},h^{\ast}) \to \End(\Sigma)$ of the associative algebra $\Cl(V^{\ast},h^{\ast})$ on a real vector space $\Sigma$, which is necessarily of dimension $2^4=16$. This representation is unique up to equivalence; moreover, the map $\gamma$ is bijective (see \cite{gf}) and hence gives an isomorphism of unital and associative algebras from $\Cl(V^\ast, h^\ast)$ to $(\End(\Sigma),\circ)$, 
where $\circ$ denotes the composition of endomorphisms of $\Sigma$. Composing $\gamma$ with the inverse of the Chevalley-Riesz isomorphism \eqref{eq:CR} gives an isomorphism of unital associative algebras which we denote by:
\ben
\label{Psih}
\Psi_h^\gamma:=\gamma\circ \Psi_h^{-1}: (\wedge V^\ast,\diamond_h)\rightarrow  (\End(\Sigma),\circ) 
\een
This provides a particularly convenient model of the Clifford representation $\gamma$, which is useful from the point of view of spin geometry. We equip $\Sigma$ with a scalar product\footnote{Such a scalar product is determined up to multiplication by a positive constant. It belongs to one of the two homothety classes of admissible pairings (in the sense of \cite{ACD,AC}) which exist on an irreducible representation of $\Cl(V,h)$ in a real vector space.} $\cB:\Sigma\times \Sigma \rightarrow \mathbb{R}$ relative to which the operator of Clifford multiplication by co-vectors is symmetric (see \cite{ACD, AC, ga1,gf,CLS}). This scalar product satisfies:
\ben
\label{eq:cBadj}
\cB(\Psi_h^\gamma(\alpha)\xi_1 , \xi_2) = \cB(\xi_1 , \Psi_h^\gamma(\tau(\alpha))\xi_2)\quad \forall\,\, \xi_1, \xi_2 \in \Sigma \quad \forall \alpha\in \wedge V^{\ast}~,
\een
that is: 
\ben
\label{gammat}
\Psi_h^\gamma(\alpha)^t=\Psi_h^\gamma(\tau(\alpha))~~ \quad \forall \alpha\in \wedge V^\ast~,
\een
where $~^t:\End(\Sigma)\rightarrow \End(\Sigma)$ denotes the $\cB$-transpose of endomorphisms of $\Sigma$. Relation \eqref{eq:cBadj} implies that $\cB$ is invariant under the restriction of $\gamma$ to a representation of the pin group  $\Pin (V^{\ast}, h^{\ast}) \subset \Cl(V^{\ast},h^{\ast})$ on $\Sigma$. Using the isomorphism $\Psi_\gamma$, we endow the algebra $(\wedge V^\ast, \diamond_h)$ with the {\em K\"ahler-Atiyah trace} \cite{ga1,gf,CLS} $S:\wedge V^\ast\rightarrow \R$, which is defined by transporting the ordinary trace $\tr:\End(\Sigma)\rightarrow \R$ to $\wedge V^\ast$: 
\ben
\label{Str}
S(\alpha):=\tr(\Psi_h^{\gamma}(\alpha)) \quad \forall\,\, \alpha\in \wedge^\ast V^\ast 
\een
The K\"ahler-Atiyah trace satisfies $S(1)=\tr(\id_\Sigma)=16$ and the cyclicity property:
\ben
\label{cyclicity}
S(\alpha_1\diamond_h \alpha_2)=S(\alpha_2\diamond_h\alpha_1) \quad  \forall\,\, \alpha_1,\alpha_2\in V^\ast
\een
as well as the relation: 
\ben
\label{Stau}
\cS(\tau(\alpha))=\cS(\alpha)~~\forall \alpha\in \wedge V^\ast
\een
both of which follow from the similar properties of the map $\tr:\End(\Sigma)\rightarrow \R$. Moreover, $\cS$ is a non-degenerate trace on the \KA algebra since $\Psi_h^\gamma$ is bijective and $\tr$ is a non-degenerate trace on the algebra $(\End(\Sigma),\circ)$. Thus $(\wedge^\ast V^\ast, \diamond_h, S)$ is a non-commutative Frobenius algebra, which identifies with the noncommutative Frobenius algebra $(\End(\Sigma), \circ, \tr)$ through the isomorphism \eqref{Psih}. 

\begin{prop}
For any $\alpha\in \wedge V^\ast$, we have: 
\be
\cS(\alpha)=16 \alpha^{(0)}
\ee
where $\alpha^{(0)}$ denotes the rank zero component of the polyform $\alpha$.
\end{prop}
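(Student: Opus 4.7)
The plan is to leverage the cyclicity \eqref{cyclicity} of $\cS$ together with the anticommutation relations in the K\"ahler-Atiyah algebra. Fix an orthonormal basis $(e^1,\ldots,e^8)$ of $V^\ast$ with respect to $h^\ast$, so that $e^i\diamond_h e^j=-e^j\diamond_h e^i$ for $i\neq j$ and $e^i\diamond_h e^i=1$. For each subset $I=\{i_1<\cdots<i_k\}\subseteq\{1,\ldots,8\}$, set $\epsilon_I:=e^{i_1}\diamond_h\cdots\diamond_h e^{i_k}$ and $\epsilon_\emptyset:=1$. Using the formula $\alpha\diamond_h\beta=\alpha\wedge\beta+\iota_\alpha\beta$ for a one-form $\alpha$ and the orthonormality of the basis, one checks by induction that each $\epsilon_I$ coincides with the wedge product $e^{i_1}\wedge\cdots\wedge e^{i_k}$, so the $\epsilon_I$ form an $\R$-basis of $\wedge V^\ast$. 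Since $\cS$ is $\R$-linear and $\cS(1)=\tr(\id_\Sigma)=16$, it suffices to prove $\cS(\epsilon_I)=0$ whenever $I\neq\emptyset$.

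The common strategy in each case is to find a covector $e^j$ satisfying $e^j\diamond_h \epsilon_I=-\epsilon_I\diamond_h e^j$ and then use $e^j\diamond_h e^j=1$ together with cyclicity to deduce
\[
\cS(\epsilon_I)=\cS(\epsilon_I\diamond_h e^j\diamond_h e^j)=\cS(e^j\diamond_h \epsilon_I\diamond_h e^j)=-\cS(\epsilon_I\diamond_h e^j\diamond_h e^j)=-\cS(\epsilon_I),
\]
which forces $\cS(\epsilon_I)=0$. The choice of $j$ depends on the parity of $k:=|I|$. For $k$ odd with $1\leq k\leq 7$, any $j\notin I$ anticommutes with each of the $k$ factors of $\epsilon_I$, yielding the required overall sign $(-1)^k=-1$. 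For $k$ even with $2\leq k\leq 8$, split $\epsilon_I=e^{i_1}\diamond_h \epsilon'$ with $\epsilon':=e^{i_2}\diamond_h\cdots\diamond_h e^{i_k}$; since $e^{i_1}$ anticommutes with each of the $k-1$ factors of $\epsilon'$, we have $\epsilon_I=(-1)^{k-1}\epsilon'\diamond_h e^{i_1}=-\epsilon'\diamond_h e^{i_1}$, and applying cyclicity to $\cS(e^{i_1}\diamond_h \epsilon')=\cS(\epsilon'\diamond_h e^{i_1})$ again yields $\cS(\epsilon_I)=-\cS(\epsilon_I)$.

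No serious obstacle arises; the only point requiring care is the top case $k=8$, where no index outside $I$ is available and one must use the even-parity argument above (with $j\in I$). Combining the two vanishing statements with $\cS(\alpha^{(0)}\cdot 1)=16\,\alpha^{(0)}$ and the $\R$-linearity of $\cS$ then yields $\cS(\alpha)=16\,\alpha^{(0)}$.
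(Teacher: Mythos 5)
Your proof is correct, and it takes a genuinely different route from the paper. The paper argues representation-theoretically: it observes that $\cS$ is $\SO(V,h)$-invariant (since, via $\Psi_h^\gamma$, the $\SO$-action becomes conjugation in $\End(\Sigma)$, which preserves the ordinary trace), and then asserts that this forces $\cS$ to vanish on all non-zero rank components because $\SO(V,h)$ acts non-trivially there. Your approach is a direct, elementary computation: you expand in the Clifford monomial basis $\epsilon_I$ and use cyclicity together with the anticommutation relations to force $\cS(\epsilon_I)=-\cS(\epsilon_I)$ for $I\neq\emptyset$, cleanly split by parity of $|I|$. A worthwhile remark in favor of your route: the paper's phrasing ``$\SO(V,h)$ acts non-trivially on forms of non-zero rank'' actually fails for $k=8$, since $\SO(V,h)$ fixes the volume form $\nu_h$ and hence acts trivially on $\wedge^8 V^\ast$; the invariance argument therefore does not by itself show $\cS(\nu_h)=0$, and one must supply a separate observation (for instance, that $\Psi_h^\gamma(\nu_h)$ is the chirality operator, which is traceless because $\dim\Sigma^+=\dim\Sigma^-$). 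Your even-parity case handles $k=8$ without any special pleading, so your proof is in this respect more careful, at the cost of being computational rather than conceptual.
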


\begin{proof}
The $\R$-linear map $\cS$ is invariant under the natural action of $\SO(V,h)$ on $\wedge V^\ast$, because this action transports through $\Psi_h^\gamma$ to the conjugation action on operators, which preserves the ordinary trace on $\End(\Sigma)$. Since $\SO(V,h)$ acts non-trivially on forms of non-zero rank and since $\Psi_h(1)=\id_\Sigma$, this implies:
\be
\cS(\alpha)=\cS(\alpha^{(0)})=\alpha^{(0)} \cS(1)=16  \alpha^{(0)} \quad \forall\alpha \in \wedge V^\ast  
\ee
\end{proof}

\noindent
The \KA trace induces a new scalar product on $\wedge V^\ast$, which, as we shall see in a moment, corresponds to the scalar product induced by $\cB$ on $\End(\Sigma)$.

\begin{definition}
The {\em Frobenius pairing} is the scalar product  $Q$ defined on $\wedge V^\ast$ through: 
\ben
\label{Q}
Q(\alpha,\beta):=S(\tau(\alpha)\diamond_h \beta) = 16 (\tau(\alpha)\diamond_h \beta) ^{(0)} \quad \forall\,\, \alpha,\beta\in \wedge V^\ast~  
\een
\end{definition}

\noindent 
The fact that $Q$ is symmetric follows from the properties \eqref{cyclicity} and \eqref{Stau} of the \KA trace using the fact that $\tau$ is anti-automorphism. The nondegeneracy of $Q$ follows from that of $\cS$. 

\begin{prop}
The Frobenius pairing $Q$ corresponds through the isomorphism \eqref{Psih} to the scalar product $(~,~)_{\cB}$ induced on $\End(\Sigma)$ by the scalar product $\cB$:
\be
Q(\alpha,\beta)=(\Psi_h^\gamma(\alpha)^t, \Psi_h^\gamma(\beta))_{\cB}=\tr(\Psi_h^\gamma(\alpha)^t\circ \Psi_h^\gamma(\beta))\quad \forall \alpha,\beta\in \wedge V^\ast 
\ee
\end{prop}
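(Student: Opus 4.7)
The plan is to unpack both sides using the established properties of $\Psi_h^\gamma$, $\cS$, and $\tau$, and show that the proposed identity reduces to a direct composition of known facts. The proof is essentially a chain of substitutions, with no serious obstacle.

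First, I would start from the definition \eqref{Q} of the Frobenius pairing and rewrite it using the definition \eqref{Str} of the \KA trace, obtaining
\[
Q(\alpha,\beta)=\cS(\tau(\alpha)\diamond_h \beta)=\tr\bigl(\Psi_h^\gamma(\tau(\alpha)\diamond_h \beta)\bigr).
\]
Next, since $\Psi_h^\gamma\colon(\wedge V^\ast,\diamond_h)\to(\End(\Sigma),\circ)$ is an isomorphism of unital associative algebras, it converts the geometric product into composition:
\[
\Psi_h^\gamma(\tau(\alpha)\diamond_h \beta)=\Psi_h^\gamma(\tau(\alpha))\circ \Psi_h^\gamma(\beta).
\]

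Then I would invoke relation \eqref{gammat}, which states that $\Psi_h^\gamma(\tau(\alpha))=\Psi_h^\gamma(\alpha)^t$, where $^t$ denotes the $\cB$-transpose. Substituting gives
\[
Q(\alpha,\beta)=\tr\bigl(\Psi_h^\gamma(\alpha)^t\circ \Psi_h^\gamma(\beta)\bigr).
\]

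Finally, I would recall that the scalar product $(\cdot,\cdot)_{\cB}$ induced on $\End(\Sigma)$ by a scalar product $\cB$ on $\Sigma$ is precisely $(A,B)_\cB=\tr(A^t\circ B)$ for $A,B\in \End(\Sigma)$, where $^t$ denotes the $\cB$-transpose. This identifies the last expression with $(\Psi_h^\gamma(\alpha)^t,\Psi_h^\gamma(\beta))_{\cB}$, completing the proof. The only subtle point, which is effectively already handled by \eqref{eq:cBadj}–\eqref{gammat}, is ensuring that the transpose used in the Frobenius pairing on $\End(\Sigma)$ matches the one intertwined by $\Psi_h^\gamma$ with the reversion $\tau$; this is guaranteed by the admissibility of $\cB$.
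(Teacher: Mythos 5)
Your proof is correct and follows essentially the same chain of substitutions as the paper's: unpack $Q$ via the definition of $\cS$, use that $\Psi_h^\gamma$ is an algebra isomorphism, invoke $\Psi_h^\gamma(\tau(\alpha))=\Psi_h^\gamma(\alpha)^t$, and match the result with the definition $(A,B)_\cB=\tr(A^t\circ B)$. The only minor point worth flagging is that with that definition of $(\cdot,\cdot)_\cB$ the final expression reads $(\Psi_h^\gamma(\alpha),\Psi_h^\gamma(\beta))_\cB$ rather than $(\Psi_h^\gamma(\alpha)^t,\Psi_h^\gamma(\beta))_\cB$; the extra transpose in the proposition's display (which you copied) appears to be a typographical slip in the paper, as the paper's own proof confirms.
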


\begin{proof}
The statement follows from the relation: 
\be
Q(\alpha,\beta)=S(\tau(\alpha) \diamond_h \beta)=\tr(\Psi^{\gamma}_h(\alpha)^t\circ\Psi^{\gamma}_h(\beta))=(\Psi^{\gamma}_h(\alpha), \Psi^{\gamma}_h(\beta))_{\cB}\quad \forall\,\, \alpha,\beta\in \wedge^\ast V^\ast 
\ee
where we used the definition: 
\be
(A,B)_{\cB}:=\tr(A^t \circ B)\quad \forall A,B\in \End(\Sigma)
\ee
of the scalar product induced by $\cB$ on $\End(\Sigma)$.
\end{proof}

\noindent 
Denote by $||.||_Q$ the norm induced by $Q$ on $\wedge V^\ast$. Sub-multiplicativity of the norm induced by $\cB$ immediately implies sub-multiplicativity of $||.||_Q$.

\begin{cor}
\label{cor:normed}
The Frobenius norm $||\, . \,||_Q$ is sub-multiplicative with respect to the geometric product $\diamond_h$:
\be
||\alpha \diamond_h\beta||_Q\leq ||\alpha||_Q||\beta||_Q \quad \forall \alpha,\beta\in \wedge V^\ast 
\ee
Hence the triplet $(\wedge V^\ast, \diamond_h, ||~||_Q)$ is a normed algebra which satisfies $||1||_Q=\sqrt{16}$. 
\end{cor}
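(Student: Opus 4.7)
The plan is to transport everything through the algebra isomorphism $\Psi_h^\gamma\colon(\wedge V^\ast,\diamond_h)\to(\End(\Sigma),\circ)$ and reduce the claim to the well-known sub-multiplicativity of the Hilbert--Schmidt norm on endomorphism algebras. By the preceding proposition, $Q(\alpha,\beta)=(\Psi_h^\gamma(\alpha),\Psi_h^\gamma(\beta))_\cB$, so $\Psi_h^\gamma$ is an isometry between $(\wedge V^\ast,||\cdot||_Q)$ and $(\End(\Sigma),||\cdot||_\cB)$, where $||\cdot||_\cB$ denotes the Hilbert--Schmidt norm associated to $\cB$.

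The main computation will then be: since $\Psi_h^\gamma$ is a morphism of algebras,
\[
||\alpha\diamond_h\beta||_Q=||\Psi_h^\gamma(\alpha)\circ\Psi_h^\gamma(\beta)||_\cB,
\]
and I would invoke the standard sub-multiplicativity $||A\circ B||_\cB\leq||A||_\cB||B||_\cB$ on $\End(\Sigma)$ (which follows from Cauchy--Schwarz applied to the Hilbert--Schmidt inner product, or from the fact that $\cB$ induces an operator norm bound). Composing gives $||\alpha\diamond_h\beta||_Q\leq||\alpha||_Q||\beta||_Q$, which is the asserted inequality.

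For the computation of $||1||_Q$, I would use directly the definition \eqref{Q}: $||1||_Q^2=Q(1,1)=S(\tau(1)\diamond_h 1)=S(1)=16$, since $\tau$ is unital and $S(1)=\tr(\id_\Sigma)=\dim\Sigma=16$. Taking square roots yields $||1||_Q=\sqrt{16}$.

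Honestly there is no serious obstacle here, as the nontrivial work was already done in establishing the isometry of the preceding proposition; the only subtlety worth noting is that the Hilbert--Schmidt sub-multiplicativity on $\End(\Sigma)$ might be recalled explicitly, either by expanding $||AB||_\cB^2=\tr(B^t A^t A B)$ and bounding $A^t A$ by its operator norm times the identity, or more directly by Cauchy--Schwarz on the rows of $A$ and columns of $B$ in a $\cB$-orthonormal basis. Either form suffices for the corollary.
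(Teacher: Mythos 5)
Your proof is correct and follows the same route the paper indicates: the preceding proposition makes $\Psi_h^\gamma$ an isometry onto $(\End(\Sigma),||\cdot||_\cB)$, and the claim reduces to the standard sub-multiplicativity of the Hilbert--Schmidt norm on $\End(\Sigma)$, with $||1||_Q^2=S(1)=16$ handled directly from the definition. No further comment needed.
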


\noindent
It turns out that the two scalar products $\langle~,~\rangle_h$ and $Q$ which we introduced on $\wedge V^\ast$ agree up to a factor. To show this, we need the following result:

\begin{lemma}
\label{lemma:Sh}
For any $\alpha\in \wedge^p V^\ast$ and $\beta\in \wedge^q V^\ast$, we have: 
\be
S(\alpha\diamond_h\beta)=16 (-1)^{\left[\frac{p}{2}\right]} \langle \alpha,\beta\rangle_h = 16 (-1)^{\frac{p(p-1)}{2}} \langle \alpha,\beta\rangle_h 
\ee
\end{lemma}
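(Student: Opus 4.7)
The plan hinges on three ingredients. First, $\cS(\gamma)=16\,\gamma^{(0)}$ for every $\gamma\in\wedge V^\ast$, so the statement reduces to computing the rank-zero component of $\alpha\diamond_h\beta$. Second, by Proposition \ref{prop:genprodexp} this geometric product equals $\sum_{k=0}^{8}(-1)^{[(k+1)/2]+kp}\,\alpha\Delta_k^h\beta$, and $\alpha\Delta_k^h\beta\in\wedge^{p+q-2k}V^\ast$. Combined with the constraint $k\leq\min(p,q)$ implicit in $\Delta_k^h$, the rank-zero component can only arise when $p=q$ and $k=p$. This immediately kills both sides when $p\neq q$, matching $\langle\alpha,\beta\rangle_h=0$ for forms of different pure rank.

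The substantive step is the $p=q$ case, which boils down to the scalar identity
\begin{equation*}
\alpha\Delta_p^h\beta=\langle\alpha,\beta\rangle_h\qquad\text{for all }\alpha,\beta\in\wedge^p V^\ast.
\end{equation*}
I would argue this either representation-theoretically --- the map $\Delta_p^h\colon\wedge^p V^\ast\otimes\wedge^p V^\ast\to\R$ is $\SO(V,h)$-equivariant and scalar-valued, hence proportional to $\langle\,,\,\rangle_h$, with the constant fixed to $1$ by evaluating on $e^1\wedge\cdots\wedge e^p$ in an $h$-orthonormal frame --- or by unfolding \eqref{DeltaExp} componentwise in the det convention.

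What remains is sign bookkeeping. At $k=q=p$, Proposition \ref{prop:genprodexp} contributes the factor $(-1)^{[(p+1)/2]+p^2}$. Using $(-1)^{p^2}=(-1)^p$ and a case check on $p\bmod 4$, one verifies
\begin{equation*}
\left[\tfrac{p+1}{2}\right]+p\equiv\left[\tfrac{p}{2}\right]\equiv\tfrac{p(p-1)}{2}\pmod{2},
\end{equation*}
identifying the two equivalent formulas in the statement. The sign manipulation is where I would expect to slip up, since the exponent in Proposition \ref{prop:genprodexp} is convention-dependent; as a consistency check I would verify that the resulting sign makes the Frobenius pairing $Q(\alpha,\alpha)=\cS(\tau(\alpha)\diamond_h\alpha)$ positive on pure-rank forms, using $\tau(\alpha)=(-1)^{p(p-1)/2}\alpha$ from \eqref{pitaurels}.
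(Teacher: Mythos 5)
Your argument follows essentially the same route as the paper's: reduce $\cS(\alpha\diamond_h\beta)$ to the rank-zero component via $\cS=16\,(\cdot)^{(0)}$, expand in generalized products, observe that only $k=p=q$ survives, invoke $\alpha\Delta_p^h\beta=\langle\alpha,\beta\rangle_h$, and finish with the same sign identity $(-1)^{[(p+1)/2]+p^2}=(-1)^{[p/2]}=(-1)^{p(p-1)/2}$. The only difference is that you sketch a justification of $\alpha\Delta_p^h\beta=\langle\alpha,\beta\rangle_h$ (which the paper uses without comment) and add a positivity sanity check; both are harmless elaborations, not a different approach.
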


\begin{proof}
We have $\alpha\Delta_k\beta=0$ if $k > \min(p,q)$. Moreover, $(\alpha\Delta_k\beta)^{(0)}$ vanishes unless $2k= p+q$. The conditions $k=\frac{p+q}{2}$ and $k\leq \min(p,q)$ require $p=q=k$, so $\cS(\alpha\Delta_k^h\beta)=16 (\alpha\Delta_k^h\beta)^{(0)}$ vanishes except in this case. Using the expansion \eqref{DiamondExp}, this gives: 
\be
\cS(\alpha\diamond_h \beta)=16 \,\delta_{p,q} (-1)^{\left[\frac{p+1}{2}\right]+p^2} \alpha\Delta_p \beta=
16 (-1)^{\left[\frac{p+1}{2}\right]+p^2} \langle \alpha,\beta\rangle_h~,
\ee
where in the second equality we used the fact that $\langle \alpha,\beta\rangle_h$ vanishes unless $p=q$. The conclusion follows by noticing that: 
\be
(-1)^{\left[\frac{p+1}{2}\right]+p^2}=(-1)^{\left[\frac{p+1}{2}\right]-p}=(-1)^{\frac{p(p+1)}{2}-p}=(-1)^{\frac{p(p-1)}{2}}=(-1)^{\left[\frac{p}{2}\right]}~,
\ee
where we twice used the identity:
\be
(-1)^{\left[\frac{m}{2}\right]}=(-1)^{\frac{m(m-1)}{2}}\quad \forall m\in \Z 
\ee
\end{proof}

\begin{prop}
\label{prop:Qh}
The scalar products $\frac{1}{16}Q$ and $\langle~,~\rangle_h$ coincide on $\wedge V^\ast$. Hence for all $\alpha,\beta\in \wedge V^\ast$, we have: 
\ben
\label{hS}
\langle \alpha,\beta\rangle_h=\frac{1}{16} Q(\alpha,\beta)=\frac{1}{16}\cS(\tau(\alpha)\diamond_h \beta)=\frac{1}{16}\cS(\alpha\diamond_h \tau(\beta))
\een
In particular, we have:
\be
|\alpha|_h=\frac{1}{\sqrt{16}}||\alpha||_Q\quad \forall \alpha\in \wedge V^\ast
\ee
\end{prop}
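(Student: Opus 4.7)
The plan is to reduce the identity to pure-rank forms by bilinearity and then exploit the explicit formula for $\tau$ from \eqref{pitaurels} together with Lemma \ref{lemma:Sh}. Since $Q$ and $\langle\cdot,\cdot\rangle_h$ are both $\R$-bilinear and $\wedge V^\ast$ decomposes as the direct sum of its pure-rank components, it suffices to verify the first equality of \eqref{hS} when $\alpha\in \wedge^p V^\ast$ and $\beta\in \wedge^q V^\ast$. I would first dispatch the case $p\neq q$: since $\tau$ preserves rank, $\tau(\alpha)\in \wedge^p V^\ast$, and by the rank-counting argument already used inside the proof of Lemma \ref{lemma:Sh}, the rank-zero component $(\tau(\alpha)\diamond_h \beta)^{(0)}$ vanishes unless the two factors have equal rank. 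Both sides of \eqref{hS} are then zero.

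For the main case $p=q$, I would apply \eqref{pitaurels} to rewrite $\tau(\alpha)=(-1)^{p(p-1)/2}\alpha$ and use $\R$-linearity of $\cS$ to pull this scalar outside, obtaining $Q(\alpha,\beta)=(-1)^{p(p-1)/2}\cS(\alpha\diamond_h\beta)$. A direct invocation of Lemma \ref{lemma:Sh} gives $\cS(\alpha\diamond_h\beta)=16(-1)^{p(p-1)/2}\langle\alpha,\beta\rangle_h$, so the two sign factors cancel and we arrive at $Q(\alpha,\beta)=16\langle\alpha,\beta\rangle_h$, which is the first equality of \eqref{hS}. The second equality then follows either from the symmetry of $Q$ established after \eqref{Q} (swapping $\alpha$ and $\beta$), or directly by combining the cyclicity property \eqref{cyclicity} of $\cS$ with the fact that $\tau$ is an anti-automorphism: $\cS(\tau(\alpha)\diamond_h\beta)=\cS(\beta\diamond_h\tau(\alpha))=\cS(\alpha\diamond_h\tau(\beta))$ after renaming.

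The norm identity $|\alpha|_h = \tfrac{1}{\sqrt{16}}\|\alpha\|_Q$ then falls out immediately by setting $\beta=\alpha$ in \eqref{hS} and taking the positive square root. The whole argument is essentially a short two-step sign computation, so I do not expect a genuine obstacle; the only care needed is to keep the sign exponent $[(p+1)/2]+kp$ from Proposition \ref{prop:genprodexp} consistent with the $p(p-1)/2$ appearing in the reversion formula, but this bookkeeping is already encapsulated in the statement of Lemma \ref{lemma:Sh}, so its re-use here is frictionless.
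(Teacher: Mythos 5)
Your proposal is correct and follows essentially the same route as the paper's proof: reduce to pure-rank forms by bilinearity, use the explicit sign $\tau(\alpha)=(-1)^{p(p-1)/2}\alpha$ from \eqref{pitaurels}, and invoke Lemma~\ref{lemma:Sh} so the two copies of $(-1)^{p(p-1)/2}$ cancel, giving $Q(\alpha,\beta)=16\langle\alpha,\beta\rangle_h$. The paper states this as a single displayed chain and does not bother to isolate the $p\neq q$ case (Lemma~\ref{lemma:Sh} already returns $0$ there because $\langle\alpha,\beta\rangle_h=0$), nor does it spell out the last equality of \eqref{hS}; your extra remarks on both points are harmless. One small caveat: your phrase ``$\cS(\beta\diamond_h\tau(\alpha))=\cS(\alpha\diamond_h\tau(\beta))$ after renaming'' is not literally a renaming --- that equality is the content of the symmetry of $Q$ and needs the $\cS\circ\tau=\cS$ property \eqref{Stau} together with the anti-automorphism law for $\tau$, not cyclicity alone. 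Your first alternative (quoting the already-established symmetry of $Q$ and then applying cyclicity once) is the clean way to say it.
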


\begin{proof}
For any $\alpha\in \wedge^p V^\ast$ and $\beta\in \wedge^q V^\ast$, we have: 
\be
Q(\alpha,\beta)=\cS(\tau(\alpha)\diamond_h\beta)=(-1)^{\frac{p(p-1)}{2}} \cS(\alpha\diamond_h\beta)=16\langle \alpha,\beta\rangle_h
\ee
where we used Lemma \ref{lemma:Sh}. The conclusion follows from bilinearity of $Q$ and $\langle~,~\rangle_h$. 
\end{proof}

\noindent In particular, we have: 
\be
|\alpha|_h^2=\frac{1}{16}\cS(\tau(\alpha)\diamond_h\alpha)=\frac{1}{16}\tr(\Psi_h^\gamma(\alpha)^t\circ \Psi_h^\gamma(\alpha))\quad \forall \alpha\in \wedge V^\ast 
\ee
Proposition \ref{prop:Qh} and Corollary \ref{cor:normed} imply: 

\begin{cor}
\label{cor:normdiamond}
The triplet $(\wedge V^\ast,\diamond_h, \sqrt{16} |\,\cdot\,|_h)$ is a normed algebra. Hence for any $\alpha,\beta\in \wedge V^\ast$, we have: 
\ben
\label{normdiamond}
|\alpha\diamond_h\beta|_h\leq \sqrt{16} |\alpha|_h|\beta|_h 
\een
\end{cor}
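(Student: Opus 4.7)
The plan is to combine Proposition \ref{prop:Qh} with Corollary \ref{cor:normed}, since the scaling between the two norms is exactly the constant that reshapes the sub-multiplicativity bound into the one claimed. In effect, Corollary \ref{cor:normed} is already the statement we want, just written in the Frobenius norm $\|\cdot\|_Q$; Proposition \ref{prop:Qh} then converts it into a statement about $|\cdot|_h$.

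Concretely, I would first recall from Proposition \ref{prop:Qh} the identity $|\alpha|_h=\frac{1}{\sqrt{16}}\|\alpha\|_Q$, valid for every $\alpha\in\wedge V^\ast$. Applying this to $\alpha\diamond_h\beta$ gives
\begin{equation*}
|\alpha\diamond_h\beta|_h=\frac{1}{\sqrt{16}}\|\alpha\diamond_h\beta\|_Q.
\end{equation*}
Then I would invoke Corollary \ref{cor:normed}, which guarantees $\|\alpha\diamond_h\beta\|_Q\leq \|\alpha\|_Q\|\beta\|_Q$. Substituting $\|\alpha\|_Q=\sqrt{16}\,|\alpha|_h$ and $\|\beta\|_Q=\sqrt{16}\,|\beta|_h$ produces
\begin{equation*}
|\alpha\diamond_h\beta|_h\leq \frac{1}{\sqrt{16}}\cdot\sqrt{16}\,|\alpha|_h\cdot\sqrt{16}\,|\beta|_h=\sqrt{16}\,|\alpha|_h|\beta|_h,
\end{equation*}
which is the inequality \eqref{normdiamond}. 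The bound $|1|_h\cdot\sqrt{16}=\sqrt{16}$ shows the constant cannot be improved to make $|\cdot|_h$ itself sub-multiplicative, which explains the rescaling.

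There is no real obstacle here: the only content beyond algebraic substitution is Corollary \ref{cor:normed}, whose sub-multiplicativity comes from the fact that $\Psi_h^\gamma$ is an isometric algebra isomorphism onto $(\End(\Sigma),\circ)$ equipped with the Hilbert-Schmidt norm $(\cdot,\cdot)_{\cB}$, and Hilbert-Schmidt norms are sub-multiplicative with respect to composition of endomorphisms. So the entire argument is a two-line chain of substitutions, and the corollary is essentially a notational restatement of Corollary \ref{cor:normed} under the normalization of Proposition \ref{prop:Qh}.
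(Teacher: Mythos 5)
Your proposal is correct and follows exactly the route the paper itself takes: the paper introduces the corollary with the line "Proposition~\ref{prop:Qh} and Corollary~\ref{cor:normed} imply:", and your two-line substitution is precisely that implication spelled out. Nothing further to add.
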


\noindent
The following lemma will be used in later computations.
\begin{lemma}
\label{lemma:cyc4form}
For any 4-forms $\omega_1,\omega_2,\omega_3\in \wedge^4 V^\ast$, we have: 
\ben
\label{cyc4form}
\langle \omega_1\Delta_2^h\omega_2,\omega_3\rangle_h=\langle \omega_1\diamond_h\omega_2,\omega_3\rangle_h=\langle \omega_2\diamond_h\omega_3,\omega_1\rangle_h=
\langle \omega_2\Delta_2^h\omega_3,\omega_1\rangle_h 
\een
Moreover, any four-form $\omega\in \wedge^4 V^\ast$ satisfies: 
\ben
\label{omegadiamondomega}
\omega\diamond_h\omega = |\omega|^2_h- \omega \Delta^h_2 \omega +\omega\wedge \omega\in \R\oplus \wedge^4 V^\ast\oplus \wedge^8 V^\ast 
\een
\end{lemma}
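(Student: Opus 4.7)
My plan is to establish \eqref{cyc4form} in two stages and then obtain \eqref{omegadiamondomega} as a direct corollary of the expansion of the geometric product. First I would prove the middle equality of \eqref{cyc4form} via the cyclicity of the K\"ahler-Atiyah trace, then recover the two outer equalities by isolating the rank-four component of the relevant geometric products using Proposition \ref{prop:genprodexp}.

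For the middle equality, I would use that $\tau(\omega)=\omega$ for every $\omega\in\wedge^4 V^\ast$ (by \eqref{pitaurels} with $k=4$, since $(-1)^{4\cdot 3/2}=1$), so Proposition \ref{prop:Qh} specializes to $\langle\alpha,\omega_3\rangle_h=\tfrac{1}{16}\cS(\tau(\alpha)\diamond_h\omega_3)$ for any polyform $\alpha$. Taking $\alpha=\omega_1\diamond_h\omega_2$ and using that $\tau$ is an anti-automorphism of the K\"ahler-Atiyah algebra acting trivially on the $\omega_i$, this yields
\[
\langle\omega_1\diamond_h\omega_2,\omega_3\rangle_h=\tfrac{1}{16}\cS(\omega_2\diamond_h\omega_1\diamond_h\omega_3).
\]
Cyclicity \eqref{cyclicity} of $\cS$ allows free rotation of the three factors inside the trace, so the analogous rewriting of $\langle\omega_2\diamond_h\omega_3,\omega_1\rangle_h$ yields the same triple trace, establishing the middle equality. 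For the outer equalities I would then use that $\langle\cdot,\cdot\rangle_h$ is orthogonal with respect to the rank decomposition of $\wedge V^\ast$, so pairing with the pure rank-four form $\omega_3$ only sees the rank-four component of $\omega_1\diamond_h\omega_2$. Expanding via Proposition \ref{prop:genprodexp} with $p=q=4$, the unique index $k$ producing a rank-four term is $k=2$ (since $p+q-2k=4$ forces $k=2$), so this rank-four component is proportional to $\omega_1\Delta_2^h\omega_2$ and the equality $\langle\omega_1\diamond_h\omega_2,\omega_3\rangle_h=\langle\omega_1\Delta_2^h\omega_2,\omega_3\rangle_h$ follows. The same argument applied with $(\omega_2,\omega_3)$ in place of $(\omega_1,\omega_2)$ gives the final equality.

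For \eqref{omegadiamondomega} I would apply Proposition \ref{prop:genprodexp} directly to $\omega\diamond_h\omega$ with $p=q=4$. For odd $k\in\{1,3\}$, the graded antisymmetry relation \eqref{gradedasym} gives $\omega\Delta_k^h\omega=(-1)^{pq+k}\omega\Delta_k^h\omega=-\omega\Delta_k^h\omega$, so those contributions vanish. The surviving terms are $k=0$, giving $\omega\wedge\omega\in\wedge^8 V^\ast$; $k=2$, giving a rank-four term proportional to $\omega\Delta_2^h\omega$; and $k=4$, giving a scalar proportional to $\omega\Delta_4^h\omega$, which Lemma \ref{lemma:Sh} applied to $\cS(\omega\diamond_h\omega)$ identifies with $|\omega|_h^2$. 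The hardest part of the whole argument is the bookkeeping of the sign factors $(-1)^{[(k+1)/2]+kp}$ from Proposition \ref{prop:genprodexp} at $k=2$ and $k=4$; once these signs are fixed, the identification of the rank-four component of $\omega_1\diamond_h\omega_2$ and the precise form of \eqref{omegadiamondomega} become essentially automatic.
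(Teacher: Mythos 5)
Your overall strategy matches the paper's proof: both obtain the middle equality $\langle\omega_1\diamond_h\omega_2,\omega_3\rangle_h=\langle\omega_2\diamond_h\omega_3,\omega_1\rangle_h$ from Proposition \ref{prop:Qh} together with cyclicity of $\cS$ and $\tau$-invariance of four-forms, and both reduce the remaining claims to the expansion of Proposition \ref{prop:genprodexp} together with rank-orthogonality of $\langle\cdot,\cdot\rangle_h$ and the graded-antisymmetry relation \eqref{gradedasym}. The small detour of invoking Lemma \ref{lemma:Sh} to pin down the scalar term instead of just citing $\omega\Delta_4^h\omega=|\omega|_h^2$ is a harmless variation.

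That said, the sign bookkeeping you defer as ``the hardest part'' is not cosmetic --- it is exactly where the outer equalities go wrong, so by postponing it you have not actually proved them. Carrying it out: for $p=q=4$ and $k=2$ the coefficient in \eqref{DiamondExp} is $(-1)^{[3/2]+8}=(-1)^9=-1$, so the rank-four component of $\omega_1\diamond_h\omega_2$ is $-\omega_1\Delta_2^h\omega_2$, and therefore $\langle\omega_1\diamond_h\omega_2,\omega_3\rangle_h=-\langle\omega_1\Delta_2^h\omega_2,\omega_3\rangle_h$, with a minus sign. Thus the leftmost and rightmost equalities in \eqref{cyc4form} as printed cannot hold; this is a typo in the lemma statement, visibly inconsistent with \eqref{omegadiamondomega} (the $-\omega\Delta_2^h\omega$ term) proved in the same lemma and with \eqref{PhiDiamondDelta}. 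What is true, and what the paper actually uses downstream, is the pair of cyclic identities $\langle\omega_1\Delta_2^h\omega_2,\omega_3\rangle_h=\langle\omega_2\Delta_2^h\omega_3,\omega_1\rangle_h$ and $\langle\omega_1\diamond_h\omega_2,\omega_3\rangle_h=\langle\omega_2\diamond_h\omega_3,\omega_1\rangle_h$, with the two families related by a minus sign. To close your argument you must compute that sign, and having done so you should flag the discrepancy with the stated lemma rather than assert an equality that your own computation contradicts.
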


\begin{proof}
By Proposition \ref{prop:Qh}, we have: 
\be
\langle \omega_1\diamond_h\omega_2,\omega_3\rangle_h=\frac{1}{16}\cS(\omega_1\diamond_h\omega_2\diamond_h\omega_3)=\frac{1}{16}\cS(\omega_2\diamond_h\omega_3\diamond_h\omega_1)=\langle \omega_2\diamond_h\omega_3,\omega_1\rangle_h 
\ee
where we used the cyclicity of the K\"{a}hler-Atiyah trace and the fact that $\tau(\omega_i)=\omega_i$ since $\omega_i$ are four-forms. This gives \eqref{cyc4form}, where the leftmost and rightmost equalities follow from the expansion of the geometric product into generalized products and the 
fact that the scalar product $\langle~,~\rangle_h$ is block-diagonal with respect to the rank-decomposition of $\wedge V^\ast$. For any four-form $\omega\in \wedge^4 V^\ast$, relation \eqref{gradedasym} gives $\omega\Delta_k^h\omega=(-1)^k\omega\Delta_k^h\omega$, which implies that $\omega\Delta_k^h\omega$ vanishes for odd $k$. Hence the expansion \eqref{DiamondExp} of $\omega \diamond_h\omega$ reduces to \eqref{omegadiamondomega} upon using the identities: 
\be
\omega\Delta_4^h\omega=|\omega|_h^2~~\mathrm{and}~~\omega\Delta_0^h\omega=\omega\wedge\omega~ 
\ee
\end{proof}

\noindent Submultiplicativity of the Frobenius norm $||\,\cdot \,||_Q=\sqrt{16}||\,\cdot\,||_h$ implies the following inequality, which will be important later. 

\begin{prop}
\label{prop:bound}
For any nonzero self-dual four-form $\omega\in \wedge^4_+ V^\ast \setminus \{0\}$, we have: 
\ben
\label{bound}
|\omega\Delta_2^h\omega|_h < \sqrt{14} |\omega|_h^2 
\een
\end{prop}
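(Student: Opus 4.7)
The plan is to combine the orthogonal rank-decomposition of $\omega\diamond_h\omega$ from Lemma~\ref{lemma:cyc4form} with the submultiplicativity estimate of Corollary~\ref{cor:normdiamond}, which together give the non-strict version of \eqref{bound} almost immediately; the crux of the argument will then be to upgrade this to a \emph{strict} inequality via a rigidity step that uses the spinorial interpretation through $\Psi_h^\gamma$.

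First, self-duality of $\omega$ yields $\omega\wedge\omega=\langle\omega,\omega\rangle_h\,\nu_h=|\omega|_h^2\,\nu_h$, so $|\omega\wedge\omega|_h=|\omega|_h^2$. Since the three summands in the decomposition~\eqref{omegadiamondomega} lie in the mutually $\langle\cdot,\cdot\rangle_h$-orthogonal subspaces $\wedge^0 V^\ast$, $\wedge^4 V^\ast$ and $\wedge^8 V^\ast$, one obtains $|\omega\diamond_h\omega|_h^2=2|\omega|_h^4+|\omega\Delta_2^h\omega|_h^2$. Applying Corollary~\ref{cor:normdiamond} with $\alpha=\beta=\omega$ gives $|\omega\diamond_h\omega|_h^2\leq 16|\omega|_h^4$, and combining the two yields $|\omega\Delta_2^h\omega|_h^2\leq 14|\omega|_h^4$, i.e.\ the non-strict version of \eqref{bound}.

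To upgrade to strict inequality I would transport the problem to $\End(\Sigma)$ via the isomorphism~\eqref{Psih} and set $A:=\Psi_h^\gamma(\omega)$. Three structural properties constrain $A$: (i) since $\omega$ has rank~$4$, $\tau(\omega)=\omega$, so $A$ is $\cB$-symmetric; (ii) self-duality of $\omega$ reads $\Psi_h^\gamma(\nu_h)\,A=A$, and since $\nu_h$ commutes with the even element $\omega$ one also has $A\,\Psi_h^\gamma(\nu_h)=A$, so $A$ vanishes on $\Sigma_-$ and restricts to a symmetric operator $A_+$ on the $8$-dimensional subspace $\Sigma_+$; (iii) since $\omega$ has no rank-zero component, $\tr(A)=\cS(\omega)=0$. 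Denoting by $\lambda_1,\ldots,\lambda_8$ the eigenvalues of $A_+$, Proposition~\ref{prop:Qh} applied to $\omega$ and to $\omega\diamond_h\omega$ translates the preceding data into
\begin{equation*}
\sum_{i=1}^{8}\lambda_i=0,\qquad \sum_{i=1}^{8}\lambda_i^2=16\,|\omega|_h^2,\qquad \sum_{i=1}^{8}\lambda_i^4=16\,|\omega\diamond_h\omega|_h^2.
\end{equation*}

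The rigidity step, which I expect to be the only subtle point, now concludes the argument. The elementary spectral inequality $\sum_i\lambda_i^4\leq\bigl(\sum_i\lambda_i^2\bigr)^2$ (which is the spectral incarnation of the submultiplicativity invoked above) has equality if and only if at most one $\lambda_i$ is nonzero; combined with the tracelessness constraint $\sum_i\lambda_i=0$, that equality case would force every $\lambda_i$ to vanish, hence $A=0$ and $\omega=0$, contradicting the hypothesis. Therefore the inequality is strict whenever $\omega\neq 0$, which rearranges to $|\omega\diamond_h\omega|_h^2<16|\omega|_h^4$ and then to \eqref{bound}. The point worth emphasising is that the non-strict bound is a purely K\"ahler-Atiyah-algebraic consequence of submultiplicativity, whereas the tracelessness needed to exclude the equality case becomes transparent only after passing through the spinor representation $\Psi_h^\gamma$.
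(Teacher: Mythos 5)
Your proof is correct and follows the same structure as the paper's: the orthogonal rank-decomposition of $\omega\diamond_h\omega$ plus submultiplicativity of the Frobenius norm give the non-strict bound, and the equality case is analyzed by passing to the symmetric operator $A=\Psi_h^\gamma(\omega)$ and its eigenvalues. The one place you diverge is the final rigidity step: the paper shows that equality forces $A=\tr(A)P$ for a rank-one projector $P$, translates this back to $\omega\diamond_h\omega=\cS(\omega)\omega$, and derives a contradiction from the mismatch of ranks; you instead invoke $\tr(A)=\cS(\omega)=16\,\omega^{(0)}=0$ directly, so that ``at most one nonzero eigenvalue'' together with tracelessness immediately forces $A=0$. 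Both use the same fact $\cS(\omega)=0$, but your phrasing is slightly more economical and avoids re-expanding the identity back in $\wedge V^\ast$; the excursion through $A_+$ on $\Sigma^+$ is harmless but not actually needed for the tracelessness argument.
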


\begin{proof}
Since $\omega$ is a four-form, we have $\tau(\omega)=\omega$ and hence the operator  $A:=\Psi_h^\gamma(\omega)\in \End(\Sigma)$ is $\cB$-symmetric. 
By Corollary \ref{cor:normdiamond}, we have: 
\ben
\label{e1}
|\omega\diamond_h\omega|^2_h\leq 16 |\omega|_h^4~,
\een
which is equivalent with the following inequality for the symmetric operator $A$: 
\ben
\label{e2}
\tr(A^4)\leq \tr(A^2)^2 
\een
On the other hand,  Lemma \ref{lemma:cyc4form} gives: 
\be
\omega\diamond_h\omega=|\omega|^2_h- \omega \Delta^h_2 \omega +|\omega|^2_h\nu_h~,
\ee
where we used self-duality of $\omega$ to simplify the last term. Since the terms in this expansion have different ranks, they are mutually orthogonal 
with respect to the scalar product $\langle~,~\rangle_h$ and hence:
\be
|\omega\diamond_h\omega|_h^2=2 |\omega|^4_h+| \omega \Delta^h_2 \omega|^2_h~,
\ee
where we used $|\nu_h|_h=1$. Using this in \eqref{e1} gives: 
\be
| \omega \Delta^h_2 \omega|_h^2\leq 14 |\omega|_h^4~,
\ee
which is equivalent with \eqref{bound}. Equality in \eqref{bound} holds if and only if it is achieved in \eqref{e1}, which is equivalent with \eqref{e2}. To understand when this happens, let $\lambda_1,\ldots, \lambda_{16}$ be the (real) eigenvalues of $A$ considered with multiplicity (hence some or all of these eigenvalues may be equal). Then: 
\be
\tr(A^4)=\sum_{i=1}^{16} \lambda_i^4~~\mathrm{and}~~\tr(A^2)^2=\left(\sum_{i=1}^{16} \lambda_i^2\right)^2=\sum_{i=1}^{16} \lambda_i^4+
2\sum_{1\leq i<j\leq 16} \lambda_i^2\lambda_j^2 
\ee
Hence $\tr(A^4)$ equals $\tr(A^2)^2$ if and only if:
\be
\sum_{1\leq i<j\leq 16} \lambda_i^2\lambda_j^2=0
\ee
which happens if and only if $\lambda_i\lambda_j=0$ for all $i\neq j$, which means that at most one of $\lambda_j$ can be nonzero. This happens if and only if $A=\lambda P$ where $\lambda=\tr(A)$ and $P$ is a $\cB$-orthogonal projector on a one-dimensional subspace of $\Sigma$. Since $A$ is $\cB$-symmetric, this amounts to the condition $A^2=\tr(A) A$, which is equivalent with $\omega\diamond_h\omega=\cS(\omega)\omega$ that is:
\be
|\omega|_h^2-\omega\Delta_2^h\omega+|\omega|_h^2\nu_h=16\omega^{(0)}\omega
\ee
which cannot be satisfied when $\omega\neq 0$ since the right-hand side is a form of degree four.
\end{proof}


\subsection{The spinorial description of $\Spin(7)$ structures}


Since $\nu_h$ squares to the identity in the K\"ahler-Atiyah algebra, we have $\Psi^{\gamma}_h(\nu_h)^2=\id_\Sigma$ and the vector space $\Sigma$ splits as a $\cB$-orthogonal direct sum $\Sigma = \Sigma^{+}\oplus \Sigma^{-}$, where $\Sigma^\pm$ are the eigenspaces of $\gamma_h(\nu_h)$ corresponding to the eigenvalues $\pm 1$. The subspaces define the \emph{chiral} irreducible representations of the even Clifford subalgebra $\Cl^\even(V^\ast,h^\ast)$ of chirality $\pm 1$, respectively. The spin group $\Spin(V^\ast,h^\ast)\subset \Cl^\even(V^\ast,h^\ast)$ naturally acts on $\Sigma^\pm$ through the representation induced by $\gamma$. The stabilizer of any nonzero chiral spinor is a $\Spin(7)$ of subgroup of $\Spin(V^{\ast},h^{\ast})\simeq \Spin(8)$. There exists two conjugacy orbits of such subgroups, which correspond respectively to the stabilizers of nonzero chiral spinors of positive and negative chirality and are called the $\Spin(7)_\pm$ conjugacy orbits of $\Spin(V^\ast,h^\ast)$. The later project onto the $\Spin(7)_\pm$ conjugacy classes of $\SO(V^\ast,h^\ast)\simeq \SO(V,h)$ through the double covering morphism: 
\ben
\label{lambda}
\lambda:\Spin(V,h)\rightarrow \SO(V,h)  
\een
Since the stabilizer of a chiral spinor depends only on its homothety class, this gives a bijection between the $\Spin(7)_\pm$ subgroups of $\Spin(V^\ast,h^\ast)$ and the real projective space $\P(\Sigma^\pm)$. Let  $(e^1 , \hdots , e^8 )$ is any orthonormal basis  of $(V^{\ast},h^{\ast})$. It is well-known that a nonzero four-form $\Phi\in \wedge^4 V^\ast$ is a conformal 
$\Spin(7)_\pm$ form on $(V,h)$ if and only if there exists a nonzero chiral spinor $\xi\in\Sigma^\pm\setminus\{0\}$ such that:
\ben
\label{PhiSpinor}
\Phi=\sum_{1\leq i_1 < \cdots < i_4\leq 8} \cB(\gamma_h(e^{i_1})\cdots \gamma_h(e^{i_4})\xi \, , \, \xi)\, e^{i_1}\wedge \cdots \wedge e^{i_4} 
\een
The $\Spin(7)_\pm$ stabilizer of this form in $\SO(V,h)$ coincides with the image of the $\Spin(7)_\pm$ stabilizer of $\xi$ through the double covering morphism \eqref{lambda}. The chiral spinor $\xi$ with this property is determined by $\Phi$ up to sign. It is convenient to use the notation $\gamma^i:=\gamma_h(e^i)\in \End(\Sigma)$ where $i\in \{1,\ldots, 8\}$. These operators are $\cB$-symmetric since $\tau(e^i)=e^i$. Moreover, they satisfy\footnote{We write the composition $\circ$ of operators as juxtaposition for ease of notation.}: 
\ben
\label{acomm}
\gamma_i\gamma_j+\gamma_j\gamma_i=2\delta_{ij}\id_\Sigma~~\quad~~\forall i,j\in \{1,\ldots,8\} 
\een
and: 
\ben
\label{gammavol}
\gamma^i\gamma_h(\nu_h)=-\gamma_h(\nu_h)\gamma^i~~\quad~~\forall i\in \{1,\ldots, 8\}~,
\een
where the last relation follows from \eqref{acomm} and the identity $\nu_h=e^1\wedge \ldots \wedge e^8=e^1\diamond_h\ldots \diamond_h e^8$, which implies: 
\be
\gamma_h(\nu_h)=\gamma^1\ldots \gamma^8 
\ee
Since $\Sigma^\mu$ is the eigenspace of $\gamma_h(\nu_h)$ which corresponds to eigenvalue $\mu$ and $\gamma^i$ square to the identity, relation \eqref{gammavol} implies: 
\ben
\label{gammaSigma}
\gamma^i(\Sigma^\pm)=\Sigma^\mp 
\een

\subsection{Signed spinor squares}

Let $\kappa\in \{-1,1\}$ be a sign factor. The \emph{$\kappa$-signed spinor squaring map} of the triplet $(\Sigma,\gamma,\cB)$ is the bilinear map $\cE^{\kappa}_{\gamma}: \Sigma \rightarrow \wedge V^{\ast}$ defined through (see \cite{CLS}):
\be
\cE^{\kappa}_{\gamma}(\xi)= \kappa\,(\Psi^{\gamma}_h)^{-1}(\xi\otimes \xi^{\ast}) \quad \forall \xi\in \Sigma 
\ee
Here $\xi^\ast\in \Sigma^\ast$ is the Riesz dual of $\xi\in \Sigma$ with respect to the scalar product $\cB$ and we identify $\xi\otimes \xi^\ast\in \Sigma\otimes \Sigma^\ast$ with an endomorphism of $\Sigma$ using the natural isomorphism $\Sigma\otimes \Sigma^\ast\simeq \End(\Sigma)$. We say that a polyform $\alpha\in \wedge V^{\ast}$ is the \emph{signed square} of a spinor if $\alpha\in \Im(\cE^{+}_{\gamma})\cup \Im(\cE^{-}_{\gamma})$. In this case, we say that $\alpha$ is \emph{$\kappa$-signed} if $\alpha\in \Im(\cE^\kappa_{\gamma})$. The maps $\cE_\gamma^+$ and $\cE_\gamma^-$ are called respectively the positive and negative spinor squaring maps.  

\begin{remark}
As shown in, \cite[Proposition 3.22]{CLS}, the $\kappa$-signed square of a spinor $\xi\in \Sigma$ can be expressed as: 
\ben
\label{eq:alphaexp}
\cE_\gamma^\kappa(\xi) = \frac{\kappa}{16} \sum_{k=0}^8 \sum_{1\leq i_1 < \cdots < i_k\leq 8} \cB(\gamma_h(e^{i_k})\cdots \gamma_h(e^{i_1})\xi,\xi)\, e^{i_1}\wedge \cdots \wedge e^{i_k}~,
\een
where $(e^1 , \hdots , e^8)$ is any orthonormal basis of $(V^{\ast},h^{\ast})$. See also \cite[Eq. (5.25)]{ga1} or \cite[Eq. (2.2)]{gf}. 
\end{remark}

\begin{lemma}
\label{lemma:norm4}
For any $\xi\in \Sigma$, we have: 
\ben
\label{norm4}
\cB(\xi,\xi)^2=\frac{1}{16} \sum_{k=0}^8 \sum_{1\leq i_1 < \cdots < i_k\leq 8} \cB(\xi, \gamma_h(e^{i_1})\cdots \gamma_h(e^{i_k})\xi)^2 
\een
\end{lemma}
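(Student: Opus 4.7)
The plan is to compute $|\cE_\gamma^+(\xi)|_h^2$ in two different ways and equate the results, turning the lemma into an instance of the Plancherel-type identity for the Chevalley-Riesz isomorphism applied to the rank-one operator $\xi\otimes \xi^\ast$.

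First I would exploit the expansion \eqref{eq:alphaexp}. Using that $\gamma_h(e^i)^t=\gamma_h(e^i)$ (because $\tau(e^i)=e^i$) and that the $\cB$-transpose reverses compositions, the coefficient of $e^{i_1}\wedge \cdots \wedge e^{i_k}$ in $16\,\cE_\gamma^+(\xi)$ can be rewritten as
\[
\cB\bigl(\gamma_h(e^{i_k})\cdots \gamma_h(e^{i_1})\xi,\xi\bigr)
=\cB\bigl(\xi,\gamma_h(e^{i_1})\cdots \gamma_h(e^{i_k})\xi\bigr).
\]
Since the orthonormality of $(e^1,\dots,e^8)$ with respect to $h^\ast$ makes the monomials $\{e^{i_1}\wedge\cdots\wedge e^{i_k}:1\le i_1<\cdots<i_k\le 8,\ 0\le k\le 8\}$ an $\langle\cdot,\cdot\rangle_h$-orthonormal basis of $\wedge V^\ast$, I can read off
\[
|\cE_\gamma^+(\xi)|_h^2=\frac{1}{16^2}\sum_{k=0}^{8}\sum_{1\le i_1<\cdots<i_k\le 8}
\cB\bigl(\xi,\gamma_h(e^{i_1})\cdots \gamma_h(e^{i_k})\xi\bigr)^{2}.
\]

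Next I would compute the same norm via Proposition \ref{prop:Qh}, which gives $|\alpha|_h^2=\frac{1}{16}\tr\bigl(\Psi_h^\gamma(\alpha)^t\circ \Psi_h^\gamma(\alpha)\bigr)$ for every $\alpha\in \wedge V^\ast$. By the definition of $\cE_\gamma^+$ we have $\Psi_h^\gamma(\cE_\gamma^+(\xi))=\xi\otimes \xi^\ast$; this rank-one endomorphism is $\cB$-self-adjoint, satisfies $(\xi\otimes \xi^\ast)^2=\cB(\xi,\xi)\,(\xi\otimes \xi^\ast)$, and has trace $\cB(\xi,\xi)$. Hence
\[
|\cE_\gamma^+(\xi)|_h^2=\frac{1}{16}\tr\bigl((\xi\otimes \xi^\ast)^2\bigr)=\frac{1}{16}\cB(\xi,\xi)^{2}.
\]

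Equating the two expressions for $|\cE_\gamma^+(\xi)|_h^2$ yields \eqref{norm4} after multiplying by $16$. The plan is essentially bookkeeping: the only points requiring care are the reversal of indices in \eqref{eq:alphaexp} (handled by the $\cB$-symmetry of each $\gamma^i$) and the factor $16$ relating $Q$ to $\langle\cdot,\cdot\rangle_h$ via Proposition \ref{prop:Qh}. There is no deeper obstacle, since everything reduces to the observation that the Chevalley-Riesz isomorphism is an isometry (up to the global factor $16$) sending $\cE_\gamma^+(\xi)$ to the orthogonal projector $\xi\otimes \xi^\ast/\cB(\xi,\xi)$ rescaled by $\cB(\xi,\xi)$.
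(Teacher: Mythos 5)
Your proof is correct but takes a different route from the paper's. The paper's argument is purely pointwise: it applies $\Psi_h^\gamma$ to \eqref{eq:alphaexp} to obtain the operator identity $\xi\otimes\xi^\ast = \frac{1}{16}\sum_{k,\,i_1<\cdots<i_k}\cB(\gamma^{i_k}\cdots\gamma^{i_1}\xi,\xi)\,\gamma^{i_1}\cdots\gamma^{i_k}$, evaluates both sides on $\xi$ to get $\cB(\xi,\xi)\xi$ on the left, pairs with $\xi$, and finally uses $\cB$-symmetry of the $\gamma^i$ to symmetrize the resulting product of bilinear quantities. Your version instead extracts the same scalar identity by computing $|\cE_\gamma^+(\xi)|_h^2$ in two ways: once by reading off coefficients against the $\langle\cdot,\cdot\rangle_h$-orthonormal monomial basis of $\wedge V^\ast$ (this step requires the ``det convention'' normalization the paper adopts, under which $\{e^{i_1}\wedge\cdots\wedge e^{i_k}\}_{i_1<\cdots<i_k}$ is indeed orthonormal --- worth stating explicitly), and once via Proposition~\ref{prop:Qh} and the rank-one identity $(\xi\otimes\xi^\ast)^2 = \cB(\xi,\xi)\,\xi\otimes\xi^\ast$. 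What your route buys is conceptual clarity --- it exhibits \eqref{norm4} as the Parseval identity for the (rescaled) isometry $\Psi_h^\gamma$ applied to a rank-one $\cB$-symmetric operator --- at the cost of invoking Proposition~\ref{prop:Qh} and the orthonormality of the monomial basis, neither of which the paper's more elementary evaluation argument needs.
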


\begin{proof}
Applying $\Psi_h^\gamma$ to both sides of \eqref{eq:alphaexp} gives: 
\be
\xi\otimes \xi^\ast=\frac{1}{16} \sum_{k=0}^8 \sum_{1\leq i_1 < \cdots < i_k\leq 8} \cB(\gamma^{i_k}\cdots \gamma^{i_1} \xi, \xi )  \gamma^{i_1}\cdots \gamma^{i_k} 
\ee
Evaluating this on $\xi$, we obtain: 
\be
\cB(\xi,\xi)\xi=\frac{1}{16} \sum_{k=0}^8 \sum_{1\leq i_1 < \cdots < i_k\leq 8} \cB(\gamma^{i_k}\cdots \gamma^{i_1} \xi, \xi )  \gamma^{i_1}\cdots \gamma^{i_k}\xi
\ee
which implies:
\be
\cB(\xi,\xi)^2=\frac{1}{16} \sum_{k=0}^8 \sum_{1\leq i_1 < \cdots < i_k\leq 8} \cB(\gamma^{i_k}\cdots \gamma^{i_1} \xi, \xi ) \cB(\xi, \gamma^{i_1}\cdots \gamma^{i_k}\xi)
\ee
and reduces to \eqref{norm4} upon using the fact that $\gamma^i=\gamma_h(e^i)$ are $\cB$-symmetric.
\end{proof}

\begin{prop}\cite[Proposition 3.19]{CLS}
\label{prop:equivariancelambda}
For any $\kappa\in \{-1,1\}$, the $\kappa$-signed spinor squaring map $\cE^{\kappa}_{\gamma}\colon \Sigma \to (\wedge V^{\ast},\diamond_h)$ is equivariant with respect to the double covering morphism $\lambda\colon \Spin(V^{\ast},h^{\ast}) \to \SO(V^{\ast},h^{\ast})$, that is:
\be
\cE^{\kappa}_{\gamma}(\gamma(u)(\xi)) = \lambda(u)(\cE^{\kappa}_{\gamma}(\xi))\quad \forall\,\, u\in \Spin(V^{\ast},h^{\ast})\quad \forall \xi\in \Sigma~,
\ee
where in the side we use the natural action of $\lambda(u)\in\SO(V^{\ast},h^{\ast})$ on $\wedge V^{\ast}$.
\end{prop}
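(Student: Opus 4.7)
The plan is to unpack the definition of $\cE^\kappa_\gamma$ and push the action of $\gamma(u)$ across the isomorphism $\Psi_h^\gamma$, converting composition in $\End(\Sigma)$ into the geometric product in the K\"ahler-Atiyah algebra, and then to identify the resulting conjugation with the natural action of $\lambda(u)$.

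First I would fix $u\in\Spin(V^\ast,h^\ast)\subset\Cl^{\mathrm{even}}(V^\ast,h^\ast)$ and set $\tilde u:=\Psi_h(u)\in \wedge^{\mathrm{even}}V^\ast$, so that $\Psi_h^\gamma(\tilde u)=\gamma(u)$. A standard property of the spin group that I would invoke is $\tau(u)\diamond_h u = 1$, which through \eqref{gammat} yields $\gamma(u)^t=\gamma(u)^{-1}$, i.e. $\gamma(u)$ is $\cB$-orthogonal. Using this, for any $\xi\in\Sigma$ and $\zeta\in\Sigma$ I would compute
\begin{equation*}
\bigl(\gamma(u)\xi\bigr)^\ast(\zeta)=\cB(\gamma(u)\xi,\zeta)=\cB(\xi,\gamma(u)^{-1}\zeta)=\xi^\ast\!\circ\gamma(u)^{-1}(\zeta),
\end{equation*}
from which the endomorphism identity
\begin{equation*}
\gamma(u)\xi\otimes\bigl(\gamma(u)\xi\bigr)^\ast=\gamma(u)\circ(\xi\otimes\xi^\ast)\circ\gamma(u)^{-1}
\end{equation*}
follows under the standard isomorphism $\Sigma\otimes\Sigma^\ast\simeq\End(\Sigma)$.

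Next I would apply the algebra isomorphism $(\Psi_h^\gamma)^{-1}\colon(\End(\Sigma),\circ)\to(\wedge V^\ast,\diamond_h)$ to both sides. Since $(\Psi_h^\gamma)^{-1}$ intertwines $\circ$ with $\diamond_h$ and sends $\gamma(u)$ to $\tilde u$, multiplication by $\kappa$ gives
\begin{equation*}
\cE^\kappa_\gamma(\gamma(u)\xi)=\tilde u\diamond_h\cE^\kappa_\gamma(\xi)\diamond_h\tilde u^{-1}.
\end{equation*}
It remains to show that this twisted conjugation equals $\lambda(u)$ acting naturally on $\wedge V^\ast$.

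For that step, which I expect to be the main subtlety, I would argue as follows. Because $u$ is even, the twisted adjoint and ordinary adjoint actions of $u$ on $V^\ast\subset\Cl(V^\ast,h^\ast)$ agree, so by the very definition of the vector representation $\lambda$ one has $\lambda(u)(v)=\tilde u\diamond_h v\diamond_h \tilde u^{-1}$ for every $v\in V^\ast$. Both the natural action of $\lambda(u)\in\SO(V^\ast,h^\ast)$ and the conjugation $\alpha\mapsto\tilde u\diamond_h\alpha\diamond_h\tilde u^{-1}$ are $\R$-linear endomorphisms of $\wedge V^\ast$. The conjugation is an algebra automorphism of $(\wedge V^\ast,\diamond_h)$ by construction, while the natural $\SO(V^\ast,h^\ast)$-action, being induced from an orthogonal transformation of $(V^\ast,h^\ast)$, preserves both the wedge product and the contraction $\iota$, hence preserves $\diamond_h$ as well. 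Two $\diamond_h$-algebra automorphisms of $\wedge V^\ast$ that coincide on the generating subspace $V^\ast$ must coincide everywhere, since $V^\ast$ generates $\wedge V^\ast$ under $\diamond_h$. Combining this identification with the formula of the previous paragraph yields the equivariance claim for all $\xi\in\Sigma$ and $u\in\Spin(V^\ast,h^\ast)$.
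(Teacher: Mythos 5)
Your proof is correct, and it takes the standard route: pushing the spin action across the algebra isomorphism $\Psi_h^\gamma$ to turn it into $\diamond_h$-conjugation by $\tilde u:=\Psi_h(u)$, and then observing that this conjugation and the natural $\SO$-action of $\lambda(u)$ are $\diamond_h$-algebra automorphisms agreeing on the generating subspace $V^\ast$. The paper cites this statement from \cite[Proposition~3.19]{CLS} and gives no proof of its own, so there is no in-paper argument to compare against; all the ingredients you use (the $\cB$-orthogonality of $\gamma(u)$ via \eqref{gammat} and $\tau(u)\diamond_h u=1$, the identity $(\gamma(u)\xi)^\ast=\xi^\ast\circ\gamma(u)^{-1}$, and the generation of $(\wedge V^\ast,\diamond_h)$ by $V^\ast$) are correctly invoked and the argument is complete.
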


\noindent 
The starting point of our investigation is the following result, which is a direct consequence of Corollary 3.28 in \cite{CLS}.

\begin{prop}
\label{prop:squarespinor} 
A nonzero polyform $\alpha\in \wedge V^{\ast}$ is a signed square of a nonzero chiral spinor $\xi\in \Sigma^{\mu}$ of chirality $\mu\in \{-1,1\}$ (i.e. we have $\alpha=\cE_\gamma^\kappa(\xi)$ for some $\kappa\in \{-1,1\}$) if and only if:
\begin{equation}
\label{eq:algebraiceqs1}
\alpha\diamond_h \alpha = 16\, \alpha^{(0)}\, \alpha \, , \qquad \tau(\alpha) = \alpha\, , \qquad \nu_h\diamond_h\alpha= \mu\, \alpha~ ,
\end{equation}
where $\alpha^{(0)}$ denotes the zero-rank component of $\alpha$. In this case, we have:
\ben
\label{cBalpha0}
\cS(\alpha)=16\, \alpha^{(0)} = \kappa \cB(\xi,\xi)~,
\een
and hence $\alpha^{(0)}\neq 0$ and $\kappa=\sign\, \alpha^{(0)}=\sign\, \cS(\alpha)$ and $\cB(\xi,\xi)=16 |\alpha^{(0)}|=|\cS(\alpha)|$. 
\end{prop}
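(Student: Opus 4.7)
The plan is to transport the problem through the algebra isomorphism $\Psi_h^\gamma: (\wedge V^{\ast}, \diamond_h) \to (\End(\Sigma), \circ)$ and analyze the resulting conditions on the operator $A := \Psi_h^\gamma(\alpha)$. Using $\cS = \tr \circ \Psi_h^\gamma$ together with $16\alpha^{(0)} = \cS(\alpha) = \tr(A)$, and using \eqref{gammat}, the three conditions in \eqref{eq:algebraiceqs1} translate respectively to $A^2 = \tr(A)\, A$, $A^t = A$, and $\Psi_h^\gamma(\nu_h)\, A = \mu\, A$. The proposition is therefore equivalent to the statement that a nonzero $A \in \End(\Sigma)$ satisfies these three operator conditions if and only if $A = \kappa\, \xi \otimes \xi^\ast$ for some $\kappa \in \{-1,+1\}$ and some nonzero $\xi \in \Sigma^\mu$, where $\xi^\ast = \cB(\xi, \cdot\,)$ is the Riesz dual.

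The forward implication is a short direct computation. If $\alpha = \cE_\gamma^\kappa(\xi)$ with $\xi \in \Sigma^\mu \setminus \{0\}$, then $A = \kappa\,\xi\otimes\xi^\ast$ is $\cB$-symmetric because $\cB$ is symmetric; it satisfies $A^2 = \kappa^2\, \cB(\xi,\xi)\, \xi\otimes\xi^\ast = \cB(\xi,\xi)\, A$, which combined with $\tr(A) = \kappa\, \cB(\xi,\xi)$ gives $A^2 = \tr(A)\, A$; and it satisfies $\Psi_h^\gamma(\nu_h)\, A = \mu\, A$ because $\Psi_h^\gamma(\nu_h)\,\xi = \mu\,\xi$. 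Translating back through $\Psi_h^\gamma$ yields both \eqref{eq:algebraiceqs1} and \eqref{cBalpha0}.

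For the converse, assume $A\neq 0$ is $\cB$-symmetric with $A^2 = \tr(A)\, A$ and $\Psi_h^\gamma(\nu_h)\, A = \mu\, A$. Since $\cB$ is positive-definite on $\Sigma$ (the admissible pairing can be chosen positive-definite in Euclidean signature), the spectral theorem yields a $\cB$-orthonormal eigenbasis of $A$ with real eigenvalues. The equation $A^2 = \tr(A)\, A$ forces every eigenvalue to lie in $\{0,\tr(A)\}$; if $\tr(A)=0$ this would force $A=0$, contradicting $\alpha \neq 0$. Hence $\tr(A) \neq 0$, and $P := A/\tr(A)$ is a $\cB$-symmetric idempotent of trace $1$, i.e., a rank-one $\cB$-orthogonal projector of the form $P = \xi_0\otimes\xi_0^\ast$ with $\cB(\xi_0,\xi_0)=1$. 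Setting $\kappa := \sign\,\tr(A)$ and $\xi := \sqrt{|\tr(A)|}\,\xi_0$ produces $A = \kappa\,\xi\otimes\xi^\ast$. Evaluating $\Psi_h^\gamma(\nu_h)\, A = \mu A$ on an arbitrary spinor then yields $\Psi_h^\gamma(\nu_h)\,\xi = \mu\,\xi$, so $\xi\in\Sigma^\mu$. The identities \eqref{cBalpha0} fall out of $\tr(A) = \kappa\,\cB(\xi,\xi)$ and the fact that $\cB(\xi,\xi)>0$, which also gives $\alpha^{(0)} \neq 0$ and the sign prescription for $\kappa$.

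The main obstacle is the structural lemma at the heart of the converse: that a nonzero $\cB$-symmetric operator satisfying $A^2 = \tr(A)\,A$ must be a scalar multiple of a rank-one $\cB$-orthogonal projector. This step relies essentially on positive-definiteness of $\cB$ through the spectral theorem, which rules out symmetric nilpotent solutions with $\tr(A)=0$; the rest of the proof is a routine translation between the algebraic languages of polyforms, spinor bilinears, and endomorphisms of $\Sigma$.
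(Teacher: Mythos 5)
The paper does not prove Proposition \ref{prop:squarespinor} at all: it states that the result ``is a direct consequence of Corollary 3.28 in \cite{CLS}'' and adds one sentence after the statement noting that \eqref{cBalpha0} follows from \eqref{eq:alphaexp}. Your argument therefore supplies what the paper omits: a complete, self-contained proof. It is correct. The translation of the three polyform conditions into $A^t = A$, $A^2 = \tr(A)\,A$, $\Psi_h^\gamma(\nu_h)A = \mu A$ via the algebra isomorphism $\Psi_h^\gamma$ and the trace identity $\cS = \tr\circ\Psi_h^\gamma$ is exact; the forward direction is the elementary computation you give; and the converse rests correctly on the fact that a nonzero $\cB$-symmetric operator on a Euclidean space satisfying $A^2 = \tr(A)A$ has all eigenvalues in $\{0,\tr(A)\}$, forcing $\tr(A)\neq 0$ and $A/\tr(A)$ to be a $\cB$-orthogonal idempotent of trace $1$, hence rank one. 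Your appeal to positive-definiteness of $\cB$ is justified: the paper's footnote calls $\cB$ a scalar product, and in Euclidean signature such an admissible pairing can indeed be chosen positive-definite, which is what licenses the spectral theorem and rules out the symmetric-nilpotent degeneracy you flag as the main obstacle. The final extraction of $\xi\in\Sigma^\mu$, the sign $\kappa = \sign\tr(A)$, and the norm identities \eqref{cBalpha0} all follow cleanly. Since reference \cite{CLS} also develops the K\"ahler--Atiyah formalism and the spinor-squaring map via $\Psi_h^\gamma$, your route is almost certainly the same as theirs in spirit; the value of your write-up is that it makes the underlying linear-algebra lemma (symmetric $A$ with $A^2 = \tr(A)A$ is a scalar times a rank-one projector) explicit and visible, which the present paper leaves entirely to the reference.
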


\noindent
Relation \eqref{cBalpha0} from equation \eqref{eq:alphaexp}. This relation shows that the spinor $\xi$ has unit $\cB$-norm if and only if $|\alpha^{(0)}|=\frac{1}{16}$, i.e. $|\cS(\alpha)|=1$. In that case, we say that $\xi$ is \emph{normalized} or of \emph{unit norm}.


\subsection{The algebraic characterization of conformal $\Spin(7)$ forms}


In this subsection, we present an algebraic characterization of conformal $\Spin(7)_\pm$ forms which follows from the description of the signed square of a real irreducible chiral spinor given in Proposition \ref{prop:squarespinor}. Before proceeding, let us explain the idea behind this approach. As shown in \cite{CLS}, the restriction of $\cE^\kappa_\gamma$ to any of the sets $\Sigma^+\setminus \{0\}$ or $\Sigma^-\setminus \{0\}$ is two to one for any $\kappa\in \{-1,1\}$. More precisely, a nonzero chiral spinor $\xi\in \Sigma^\mu\setminus \{0\}$ of fixed chirality $\mu\in \{-1,1\}$ which satisfies $\cE^\kappa_\gamma(\xi)=\alpha$ is determined by the polyform $\alpha$ up to sign. The stabilizer of $\xi$ is a $\Spin(7)$ subgroup of $\Spin(V^\ast,h^\ast)$ (which projects to a $\Spin(7)$ subgroup of $\SO(V^\ast,h^\ast)$) and giving such a spinor is equivalent to giving conformal $\Spin(7)_\mu$ form on $(V,h)$. This form determines the chiral spinor up to sign. Since signed spinor squaring maps are two-to-one when restricted to the sets of nonzero chiral spinors and since the group $\Spin(7)$ is simply connected, Proposition \ref{prop:equivariancelambda} implies that the stabilizer of $\cE^\kappa_\gamma(\xi)$ is isomorphic to $\Spin(7)$ for any nonzero chiral spinor. Hence specifying a conformal $\Spin(7)_\pm$ form on $(V,h)$ is equivalent to specifying the $\kappa$-signed square polyform $\alpha=\cE_\gamma^\kappa(\xi) \in \wedge V^\ast$ of a nonzero chiral spinor of chirality $\pm 1$ and $\xi$ is determined by its polyform square up to sign. In turn, such a polyform is characterized by the algebraic conditions in Proposition \ref{prop:squarespinor}, which yields the algebraic characterization of metric $\Spin(7)_\pm$ forms on $(V,h)$ that we will explore for the remaining of this note. To give this characterization explicitly, we need the following: 

\begin{lemma}
\label{lemma:squarespinor} 	
Let $\kappa\in \{-1,1\}$ be a sign factor. A non-zero polyform $\alpha\in \wedge V^\ast$ is the $\kappa$-signed square of a non-zero chiral spinor $\xi\in \Sigma^\mu$ of chirality $\mu\in \{-1,1\}$ if and only if it has the form:
\ben
\label{eq:squareformalgebraic}
\alpha =\frac{\kappa}{16}\left[\frac{1}{\sqrt{14}}|\Phi|_h +\Phi + \frac{\mu}{\sqrt{14}}|\Phi|_h \nu_h\right]~,
\een
where $\Phi \in \wedge^4 V^{\ast}$ is a uniquely determined four-form which satisfies the condition $\ast_h \Phi=\mu \Phi$ as well as the algebraic equation: 
\ben
\label{eq:quadsys}
\sqrt{14} \Phi\Delta_2^h \Phi + 12 |\Phi|_h \Phi = 0 
\een
In this case, we have:
\ben
\label{xiNormPhi}
\cB(\xi,\xi)=\frac{|\Phi|_h}{\sqrt{14}}
\een
and hence $\xi$ has unit $\cB$-norm if and only if $|\Phi|_h = \sqrt{14}$. The nonzero chiral spinor $\xi\in \Sigma^\mu$ is determined by the polyform $\alpha$ up sign. 
\end{lemma}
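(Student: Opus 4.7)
\medskip\noindent
\textbf{Proof proposal.} The plan is to apply Proposition~\ref{prop:squarespinor} and decode its three algebraic conditions \eqref{eq:algebraiceqs1} rank by rank in the decomposition $\alpha=\sum_{k=0}^{8}\alpha^{(k)}$.

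First I would extract the rank support of $\alpha$. The relation $\tau|_{\wedge^k V^\ast}=(-1)^{k(k-1)/2}\,\id$ from \eqref{pitaurels} shows that $\tau(\alpha)=\alpha$ forces $\alpha^{(2)}=\alpha^{(3)}=\alpha^{(6)}=\alpha^{(7)}=0$. Writing $\nu_h\diamond_h\alpha^{(k)}$ in terms of the identity $\nu_h\diamond_h\beta=\pi(\beta)\diamond_h\nu_h$ and comparing with $\ast_h\alpha^{(k)}=\tau(\alpha^{(k)})\diamond_h\nu_h$ shows that $\nu_h\diamond_h\alpha^{(k)}$ lies in $\wedge^{8-k} V^\ast$. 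Matching ranks in the chirality equation $\nu_h\diamond_h\alpha=\mu\alpha$ therefore kills $\alpha^{(1)}$ and $\alpha^{(5)}$, couples $\alpha^{(8)}=\mu\,\alpha^{(0)}\,\nu_h$, and imposes $\ast_h\omega=\mu\,\omega$ on $\omega:=\alpha^{(4)}$. Hence $\alpha$ is forced into the normal form $\alpha=a+\omega+\mu a\,\nu_h$ with $a:=\alpha^{(0)}\in\R$ and $\omega\in\wedge^4 V^\ast$ of chirality $\mu$ under $\ast_h$.

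Next I would compute $\alpha\diamond_h\alpha$ directly from this normal form, using $\nu_h\diamond_h\nu_h=1$, the identity $\omega\diamond_h\nu_h=\nu_h\diamond_h\omega=\mu\omega$ (a consequence of self-duality), and the expansion $\omega\diamond_h\omega=|\omega|_h^2-\omega\Delta_2^h\omega+\mu|\omega|_h^2\,\nu_h$, which follows from Lemma~\ref{lemma:cyc4form} together with $\omega\wedge\omega=\mu|\omega|_h^2\,\nu_h$ for self-dual four-forms. Matching the three surviving ranks of the Clifford squaring equation $\alpha\diamond_h\alpha=16\,a\,\alpha$ produces exactly two independent scalar relations, $|\omega|_h^2=14\,a^2$ and $\omega\Delta_2^h\omega=-12\,a\,\omega$ (the rank-$8$ match reduces to the rank-$0$ one). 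Since $\alpha\neq 0$, necessarily $\omega\neq 0$, and the first relation then determines $a=\kappa\,|\omega|_h/\sqrt{14}$ with $\kappa:=\sign a\in\{-1,1\}$, in agreement with $\kappa=\sign\alpha^{(0)}$ from \eqref{cBalpha0}. Setting $\Phi:=16\kappa\,\omega$ rewrites the normal form of $\alpha$ exactly as \eqref{eq:squareformalgebraic} and converts $\omega\Delta_2^h\omega=-12\,a\,\omega$ into \eqref{eq:quadsys}; the normalization \eqref{xiNormPhi} follows from $\cB(\xi,\xi)=16|a|=|\Phi|_h/\sqrt{14}$. Uniqueness of $\Phi$ is clear since $\Phi=16\kappa\,\alpha^{(4)}$ and both $\alpha^{(4)}$ and $\kappa$ are intrinsic invariants of $\alpha$. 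For the converse, given $\Phi\in\wedge^4 V^\ast$ satisfying $\ast_h\Phi=\mu\Phi$ and \eqref{eq:quadsys}, the polyform defined by \eqref{eq:squareformalgebraic} fulfills the three conditions \eqref{eq:algebraiceqs1} by construction, so Proposition~\ref{prop:squarespinor} produces the desired chiral spinor $\xi\in\Sigma^\mu$; its uniqueness up to sign is the two-to-one property of $\cE^\kappa_\gamma|_{\Sigma^\mu\setminus\{0\}}$ recalled from \cite{CLS}.

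The most delicate step is the rank reduction in the first paragraph: the Clifford volume form $\nu_h$ intertwines complementary ranks through both $\pi$ and $\tau$, and one has to verify with care that every component outside ranks $0,4,8$ is ruled out and that the rank-$0$ and rank-$8$ pieces are coupled by the correct factor $\mu$. Once this normal form is secured, the rest of the argument is a direct matching computation followed by the reparametrization $\omega\mapsto\Phi=16\kappa\omega$.
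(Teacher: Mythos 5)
Your proof is correct and follows essentially the same route as the paper's: invoke Proposition~\ref{prop:squarespinor}, reduce the reversion and chirality constraints to the normal form $\alpha = a + \omega + \mu a\nu_h$ with $\ast_h\omega = \mu\omega$, then match ranks in $\alpha\diamond_h\alpha = 16\,\alpha^{(0)}\alpha$ to extract $|\omega|_h^2 = 14a^2$ and $\omega\Delta_2^h\omega = -12a\,\omega$, which after the reparametrization $\Phi = 16\kappa\,\omega$ yield \eqref{eq:squareformalgebraic} and \eqref{eq:quadsys}. The only departure is that you verify the rank-by-rank reduction explicitly, whereas the paper writes down the normal form in one sentence; this is a harmless and indeed clarifying elaboration of the same argument.
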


\begin{remark}
Notice that we can replace $\Phi$ by $-\Phi$ in \eqref{eq:squareformalgebraic} provided that we also do so in \eqref{eq:quadsys}. This changes the 
sign of the middle term in  \eqref{eq:squareformalgebraic}  and of the second term in the left hand side of \eqref{eq:quadsys}. Which of the signs we choose 
is a matter of convention, provided that the signs of these terms in  \eqref{eq:squareformalgebraic} and \eqref{eq:quadsys} are changed simultaneously. 
\end{remark}

\begin{proof} 
By Proposition \ref{prop:squarespinor}, a polyform $\alpha \in \wedge V^{\ast}$ is the $\kappa$-signed square of a chiral spinor $\xi$ of chirality $\mu$ if and only if $\kappa\alpha^{(0)}>0$ and equations \eqref{eq:algebraiceqs1} are satisfied. The general solution of the second and third equations in \eqref{eq:algebraiceqs1} can be written as:
\ben
\label{alphasol}
\alpha = \frac{\kappa}{16}(c + \Phi + \mu c\nu_h)~,
\een
where $\Phi$ is a four-form which satisfies $\ast_h \Phi=\mu \Phi$ and $c\in\mathbb{R}$ is a constant. This constant must be positive since
$\alpha^{(0)}=\frac{\kappa c}{16}$. Plugging \eqref{alphasol} into the first equation of \eqref{eq:algebraiceqs1} gives:
\ben
\label{eq:QuadraticPhiLambda}
\Phi\diamond_h \Phi = 12\,c\,\Phi+ 14\, c^2 (1+\mu\nu_h)~,
\een
where we used the last equation in \eqref{eq:algebraiceqs1} and the fact that $\nu_h$ squares to $1$ in the \KA algebra. Expanding the geometric product $\diamond_h$ gives (see \eqref{omegadiamondomega}):
\ben
 \label{PhiSquareExp}
\Phi\diamond_h\Phi = \vert\Phi\vert_h^2- \Phi \Delta^h_2 \Phi +\Phi\wedge \Phi \in \R\oplus \wedge^4 V^\ast\oplus \wedge^8 V^\ast 
\een
Separating degrees, this shows that \eqref{eq:QuadraticPhiLambda} is equivalent with the conditions: 
\ben
\label{conds}
|\Phi|_h^2=14 c^2 \, , \qquad \Phi\Delta_2^h \Phi + 12 c \Phi = 0\, , \qquad \Phi\wedge \Phi = 14\, c^2 \mu \nu_h 
\een
The first and last of these conditions are equivalent since $\ast_h\Phi=\mu \Phi$ , and give:
\ben
c = \frac{\vert\Phi\vert_h}{\sqrt{14}}  
\een
Substituting this into the middle equation of \eqref{conds} gives \eqref{eq:quadsys}. Relation \eqref{alphasol} gives $\cS(\alpha)=\kappa c$, which implies $\cB(\xi,\xi)=c=\frac{\vert\Phi\vert_h}{\sqrt{14}}$ by Proposition \ref{prop:squarespinor}. Substituting the value of $c$ into \eqref{alphasol} gives \eqref{eq:squareformalgebraic}.
\end{proof}

\noindent
The remaining components of the polyform $\kappa\alpha$ appearing in \eqref{eq:squareformalgebraic} are uniquely determined by the norm of this four-form and by the chirality $\mu$ of $\xi$. Relations \eqref{eq:alphaexp} and \eqref{eq:squareformalgebraic} imply that $\Phi$ is given by \eqref{PhiSpinor} and hence is a conformal $\Spin(7)_\mu$ form on $(V,h)$.

\begin{lemma}
\label{lemma:ClassicalForm}
Let $\xi\in \Sigma^\mu$ be a nonzero chiral spinor of chirality $\mu\in \{-1,1\}$ and let $\Phi$ be the self-dual four-form given in \eqref{PhiSpinor}, where 
 $(e^1 , \hdots , e^8 )$ is any orthonormal basis  of $(V^{\ast},h^{\ast})$. Then the following statements hold: 
\begin{enumerate}
\item We have: 
\ben
\label{normxi}
\cB(\xi,\xi)=\frac{|\Phi|_h}{\sqrt{14}} 
\een
\item The $\kappa$-signed square of $\xi$ is given by: 
\ben
\label{sqxi}
\cE_\gamma^\kappa(\xi)=\frac{\kappa}{16}\left[\frac{1}{\sqrt{14}}\vert\Phi\vert_h +\Phi + \frac{\mu}{\sqrt{14}}|\Phi|_h\nu_h\right] 
\een
\end{enumerate}
\end{lemma}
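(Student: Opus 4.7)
The plan is to derive the lemma as a direct consequence of Lemma \ref{lemma:squarespinor} combined with the explicit polyform expansion \eqref{eq:alphaexp} of the signed spinor square. First, I would apply Lemma \ref{lemma:squarespinor} to the given nonzero chiral spinor $\xi\in \Sigma^\mu$: this produces a \emph{unique} four-form $\Phi'\in \wedge^4 V^\ast$ satisfying $\ast_h\Phi'=\mu\Phi'$ together with the quadratic equation \eqref{eq:quadsys}, such that $\cE_\gamma^\kappa(\xi)$ has the form \eqref{eq:squareformalgebraic} with $\Phi$ replaced by $\Phi'$, and moreover $\cB(\xi,\xi)=|\Phi'|_h/\sqrt{14}$. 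Once I show $\Phi'=\Phi$, both conclusions \eqref{normxi} and \eqref{sqxi} follow immediately.

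The identification $\Phi'=\Phi$ is obtained by comparing the rank-four components of the two available expressions for $\cE_\gamma^\kappa(\xi)$. From Lemma \ref{lemma:squarespinor}, the rank-four part equals $\frac{\kappa}{16}\Phi'$. From \eqref{eq:alphaexp}, the rank-four part equals
\begin{equation*}
\frac{\kappa}{16}\sum_{1\leq i_1<\cdots<i_4\leq 8}\cB\bigl(\gamma_h(e^{i_4})\cdots\gamma_h(e^{i_1})\xi,\xi\bigr)\,e^{i_1}\wedge\cdots\wedge e^{i_4}.
\end{equation*}
Equating these, I read off an explicit formula for $\Phi'$ in which the Clifford generators appear in reversed order compared with the definition \eqref{PhiSpinor} of $\Phi$. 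It then suffices to observe that, for strictly increasing indices $i_1<i_2<i_3<i_4$, the orthonormality of $(e^1,\ldots,e^8)$ and the anticommutation relations \eqref{acomm} imply that the four operators $\gamma_h(e^{i_j})$ pairwise anticommute; reversing the order of four such mutually anticommuting generators contributes a sign $(-1)^{\binom{4}{2}}=(-1)^6=+1$, so that $\gamma_h(e^{i_4})\cdots\gamma_h(e^{i_1})=\gamma_h(e^{i_1})\cdots\gamma_h(e^{i_4})$. Substituting this into the comparison yields $\Phi'=\Phi$.

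There is essentially no obstacle: the lemma is a bookkeeping consequence of \eqref{eq:alphaexp} and Lemma \ref{lemma:squarespinor}. The only point requiring minor care is the sign that arises from reversing the ordering of Clifford generators between the convention used in \eqref{PhiSpinor} and the one used in \eqref{eq:alphaexp}; since the number of transpositions is even in rank four, no sign appears and the two four-forms coincide verbatim, which is precisely why the constants in \eqref{normxi} and \eqref{sqxi} come out cleanly without additional factors.
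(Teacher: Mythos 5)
Your proof is correct but takes a genuinely different route from the paper. The paper proves part~(1) directly from Lemma~\ref{lemma:norm4}: it argues that the summand $\cB(\xi,\gamma^{i_1}\cdots\gamma^{i_k}\xi)$ in \eqref{norm4} vanishes unless $\frac{k(k-1)}{2}$ is even (by $\cB$-symmetry of the $\gamma^i$) and unless $k$ is even (by chirality and $\cB$-orthogonality of $\Sigma^{\pm}$), so only $k\in\{0,4,8\}$ survive, yielding $\cB(\xi,\xi)^2=\tfrac{1}{16}\bigl(2\cB(\xi,\xi)^2+|\Phi|_h^2\bigr)$; the same vanishing collapses the expansion \eqref{eq:alphaexp} to the three ranks $0,4,8$, giving part~(2) directly. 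You instead treat Lemma~\ref{lemma:squarespinor} as a black box: it tells you $\cE_\gamma^\kappa(\xi)$ already has the shape \eqref{eq:squareformalgebraic} for a unique $\Phi'$ with $\cB(\xi,\xi)=|\Phi'|_h/\sqrt{14}$, and all that remains is to identify $\Phi'$ with the classical $\Phi$ of \eqref{PhiSpinor} by comparing rank-four components. Your sign bookkeeping is right: the reversal $\gamma^{i_4}\cdots\gamma^{i_1}=\gamma^{i_1}\cdots\gamma^{i_4}$ follows from $\binom{4}{2}=6$ transpositions of pairwise anticommuting generators (or, equivalently, from $\cB$-symmetry of the $\gamma^i$ together with symmetry of $\cB$), and this is the same identification the paper silently makes when reading off $\Phi$ from the $k=4$ term of \eqref{eq:alphaexp}. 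The trade-off: your argument is shorter and avoids re-deriving the rank restriction, but it leans on the uniqueness clause of Lemma~\ref{lemma:squarespinor}; the paper's route is more self-contained at the price of repeating the parity/chirality analysis. Both are valid and neither is circular, since Lemma~\ref{lemma:squarespinor} precedes this lemma and does not depend on it.
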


\begin{proof}
The quantity $\cB(\xi, \gamma^{i_1}\cdots \gamma^{i_k}\xi)$ with $i_1<\ldots< i_4$ vanishes by $\cB$-symmetry of $\gamma^i$ unless $\frac{k(k-1)}{2}$ is even, which requires $k\in \{0,1, 4, 5, 8\}$. On the other had, this quantity vanishes when $k$ is odd since in that case $\gamma^{i_1}\cdots \gamma^{i_k}\xi$ 
has different chirality from $\xi$ (see \eqref{gammaSigma}) and since the spaces $\Sigma^+$ and $\Sigma^-$ are $\cB$-orthogonal.  Hence \eqref{norm4} reduces to:
\be
\cB(\xi,\xi)^2=\frac{1}{16} (2\cB(\xi,\xi)^2 +|\Phi|_h^2)~~,
\ee
which gives \eqref{normxi}. On the other hand, the expansion \eqref{eq:alphaexp} reduces to: 
\be
\cE_\gamma^\kappa(\xi)=\frac{\kappa}{16}\left[\cB(\xi,\xi)+\Phi+\mu\cB(\xi,\xi)\nu_h\right]~,
\ee
where we used the relation $\gamma_h(\nu_h)\xi=\mu \xi$. Combing this with \eqref{normxi} gives \eqref{sqxi}.
\end{proof}

\begin{thm}
\label{thm:Spin7algebraic} 
 The following statements are equivalent for a nonzero self-dual four-form $\Phi\in \wedge^4_{+} V^{\ast}$: 
 \begin{enumerate}[(a)]
\item $\Phi$ is a conformal $\Spin(7)_+$ form on $(V,h)$.
\item The polyform:
\ben
\label{eq:alphapolyformform}
\alpha =\frac{1}{16}\left[\frac{1}{\sqrt{14}}\vert\Phi\vert_h +\Phi + \frac{1}{\sqrt{14}}|\Phi|_h\nu_h\right]
\een
is the positive square of a nonzero positive chirality spinor. 
\item $\Phi$ satisfies the following algebraic equation:
\ben
\label{eq:algebraicconditionII}
\sqrt{14} \Phi\Delta_2^h \Phi + 12 |\Phi|_h \Phi = 0 
\een
\end{enumerate}
In particular, there is a one-to-one correspondence between the set of conformal $\Spin(7)_+$ forms on $(V,h)$ and the set of nonzero self-dual solutions of equation \eqref{eq:algebraicconditionII}.
\end{thm}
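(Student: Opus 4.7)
The plan is to reduce the theorem to Lemma \ref{lemma:squarespinor} and Lemma \ref{lemma:ClassicalForm} by specializing both to the choice $\kappa = \mu = +1$. The logical structure will be (b) $\Leftrightarrow$ (c) handled by the first lemma, and (a) $\Leftrightarrow$ (b) handled by the second together with the spinorial characterization of conformal $\Spin(7)_+$ forms recalled around \eqref{PhiSpinor}.

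First, I would prove (b) $\Leftrightarrow$ (c). Setting $\kappa = \mu = +1$ in Lemma \ref{lemma:squarespinor}, the characteristic polyform \eqref{eq:squareformalgebraic} specializes exactly to \eqref{eq:alphapolyformform}, and the algebraic constraint \eqref{eq:quadsys} becomes \eqref{eq:algebraicconditionII}. The self-duality condition $\ast_h \Phi = \mu \Phi$ with $\mu = +1$ is built into the hypothesis $\Phi \in \wedge^4_+ V^\ast$. By the uniqueness clause of the lemma, the self-dual 4-form appearing in the polyform decomposition is forced to coincide with the $\Phi$ given in the theorem. Hence $\alpha$ in \eqref{eq:alphapolyformform} is the positive square of a nonzero positive-chirality spinor if and only if $\Phi$ satisfies \eqref{eq:algebraicconditionII}.

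Next, for (a) $\Leftrightarrow$ (b), I would invoke the spinorial description of conformal $\Spin(7)_+$ forms stated around \eqref{PhiSpinor}: a nonzero 4-form on $V$ is a conformal $\Spin(7)_+$ form on $(V,h)$ if and only if there exists a nonzero $\xi \in \Sigma^+$ whose associated 4-form via \eqref{PhiSpinor} equals that form, and such $\xi$ is unique up to sign. Combining this with Lemma \ref{lemma:ClassicalForm}(2), which computes $\cE^+_\gamma(\xi)$ for $\xi \in \Sigma^+$ to be precisely the polyform $\alpha$ of \eqref{eq:alphapolyformform} built from the associated $\Phi$, the equivalence (a) $\Leftrightarrow$ (b) follows immediately in both directions.

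The final one-to-one correspondence claim would then be deduced by noting that both the signed squaring map $\cE^+_\gamma\colon \Sigma^+\setminus\{0\} \to \wedge V^\ast$ and the assignment $\xi \mapsto \Phi$ of \eqref{PhiSpinor} are two-to-one with the same $\pm \xi$ ambiguity, so composing with the projection to the 4-form component of $\alpha$ yields a bijection between the conformal $\Spin(7)_+$ forms and the nonzero self-dual solutions of \eqref{eq:algebraicconditionII}. The main (and really only) obstacle is bookkeeping: keeping track of the sign/chirality conventions so that the $\kappa = \mu = +1$ specialization aligns correctly with the "positive square, positive chirality, self-dual $\Phi$" statement of the theorem; all nontrivial algebra has already been absorbed into Lemma \ref{lemma:squarespinor} (where \eqref{eq:quadsys} was derived by decomposing $\Phi \diamond_h \Phi$ into pure-rank components via \eqref{omegadiamondomega}) and Lemma \ref{lemma:ClassicalForm}.
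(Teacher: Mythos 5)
Your proposal is correct and uses exactly the same ingredients as the paper's proof, namely Lemma \ref{lemma:squarespinor} (specialized to $\kappa=\mu=+1$) for the link between \eqref{eq:alphapolyformform} and \eqref{eq:algebraicconditionII}, and Lemma \ref{lemma:ClassicalForm} together with the spinorial characterization \eqref{PhiSpinor} for the link to conformal $\Spin(7)_+$ forms; the only difference is cosmetic (you prove two biconditionals $(a)\Leftrightarrow(b)$ and $(b)\Leftrightarrow(c)$, whereas the paper runs the cycle $(a)\Rightarrow(b)\Rightarrow(c)\Rightarrow(a)$).
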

 
\begin{proof} 
Suppose that $(a)$ holds. Then $\Phi$ is given by \eqref{PhiSpinor} for some non-zero positive chirality 
spinor $\xi$. Lemma \ref{lemma:ClassicalForm} implies that the polyform $\alpha:=\cE_\gamma^+(\xi)$ is given by \eqref{eq:alphapolyformform} with $\kappa=1$ and hence $(b)$ holds. Thus $(a)$ implies $(b)$. Lemma \ref{lemma:squarespinor} shows that $(b)$ implies $(c)$. Finally, let us assume that $(c)$ holds. Then Lemma \ref{lemma:squarespinor} shows the polyform $\alpha$ defined in terms of $\Phi$ by formula \eqref{eq:alphapolyformform} is the positive square of a positive chirality spinor $\xi$, and hence is given by \eqref{eq:alphaexp} with $\kappa=1$, so that $(a)$ holds. This shows that $(c)$ implies $(a)$ and we conclude. 
\end{proof}

\noindent
Theorem \ref{thm:Spin7algebraic} and Lemma \ref{lemma:conformalconstant} imply the following algebraic characterization of metric $\Spin(7)_+$ forms. 

\begin{cor}
\label{cor:Spin7metricalgebraic} 
A self-dual four-form $\Phi$ is a metric $\Spin(7)_+$ form on $(V,h)$ if and only if $|\Phi|_h = \sqrt{14}$ and $\Phi$ satisfies the equation:
\ben
\label{metricSpin7criterion}
\Phi\Delta^h_2 \Phi+12 \Phi=0 
\een 
\end{cor}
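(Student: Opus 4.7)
The plan is to obtain this corollary by directly combining Theorem \ref{thm:Spin7algebraic} with Lemma \ref{lemma:conformalconstant}, substituting the normalization $|\Phi|_h=\sqrt{14}$ into the conformal equation. This is essentially a bookkeeping exercise rather than a new computation, so I expect no significant obstacle.

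First I would handle the forward direction. Suppose $\Phi$ is a metric $\Spin(7)_+$ form on $(V,h)$, so $h_\Phi=h$. Then $\Phi$ is in particular a conformal $\Spin(7)_+$ form on $(V,h)$ with conformal constant $\alpha_\Phi=1$, so Lemma \ref{lemma:conformalconstant} immediately yields $|\Phi|_h=\sqrt{14}$. Since $\Phi$ is also self-dual (as any $\Spin(7)_+$ form with respect to its own metric), Theorem \ref{thm:Spin7algebraic} gives
\[
\sqrt{14}\,\Phi\Delta_2^h\Phi+12\,|\Phi|_h\,\Phi=0.
\]
Substituting $|\Phi|_h=\sqrt{14}$ and dividing by $\sqrt{14}$ yields $\Phi\Delta_2^h\Phi+12\,\Phi=0$.

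For the converse, assume $\Phi\in\wedge^4_+ V^\ast$ satisfies $|\Phi|_h=\sqrt{14}$ and $\Phi\Delta_2^h\Phi+12\,\Phi=0$. Note $\Phi$ is nonzero since $|\Phi|_h\neq 0$. Multiplying the algebraic equation by $\sqrt{14}$ and rewriting the second summand as $12\,|\Phi|_h\,\Phi$ gives exactly equation \eqref{eq:algebraicconditionII}, so Theorem \ref{thm:Spin7algebraic} (equivalence of (a) and (c)) implies that $\Phi$ is a conformal $\Spin(7)_+$ form on $(V,h)$. Finally, Lemma \ref{lemma:conformalconstant} states that a conformal $\Spin(7)$ form on $(V,h)$ is a metric $\Spin(7)$ form precisely when $|\Phi|_h=\sqrt{14}$, which is our hypothesis, so we conclude.

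The only subtle point worth double-checking is that the implication chain does not accidentally exclude the trivial form: the hypothesis $|\Phi|_h=\sqrt{14}$ rules out $\Phi=0$ in the converse, which is exactly what is needed to invoke Theorem \ref{thm:Spin7algebraic} (which is stated for nonzero self-dual four-forms). Beyond this, the argument is a direct two-line substitution.
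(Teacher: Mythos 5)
Your proof is correct and takes essentially the same route the paper intends: the paper states that Corollary \ref{cor:Spin7metricalgebraic} follows by combining Theorem \ref{thm:Spin7algebraic} with Lemma \ref{lemma:conformalconstant}, and your argument is precisely that combination, carried out carefully in both directions with the normalization $|\Phi|_h=\sqrt{14}$ substituted into the homogeneous conformal equation.
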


\noindent
Verifying that a self-dual four-form is a conformal $\Spin(7)_{+}$ form by traditional methods involves dealing with the cumbersome task of checking if there exists a basis of $V^{\ast}$ in which $\Phi$ is proportional to the expression given in equation \eqref{Phi_0}. Theorem \ref{thm:Spin7algebraic} provides a different criterion which can be used to verify if such a four-form is a conformal $\Spin(7)_{+}$ form: we only need to check if equation \eqref{eq:algebraicconditionII} is satisfied. This gives an intrinsic characterization of conformal $\Spin(7)_{+}$ forms that do not require the use of any privileged basis of $V^{\ast}$ and that we hope it can be useful for applications, see for instance \cite{fol1,fol2} for early applications of this framework. 

\begin{remark}
By Lemma \ref{lemma:squarespinor}, a conformal $\Spin(7)_+$ form on $(V,h)$ is metric if and only if the corresponding positive chirality spinor $\xi\in \Sigma^+$ (which is determined up to sign) has unit $\cB$-norm. Notice that equation \eqref{metricSpin7criterion} for metric $\Spin(7)_+$ forms is quadratic but inhomogeneous in $\Phi$, unlike equation \eqref{eq:algebraicconditionII} for conformal $\Spin(7)_+$ forms, which is not quadratic in $\Phi$ but is homogeneous of order two under rescaling  $\Phi$ by a positive constant. Also notice that $\Phi=0$ is a special solution of \eqref{eq:algebraicconditionII}, as expected.
\end{remark}

\noindent
The results above show that the cone $C_+(V,h)$ defined in \eqref{cC} coincides with the real algebraic set: 
\be
C_+(V,h)=\{\Phi\in \wedge_+^4 V^\ast \vert \sqrt{14} \Phi\Delta_2^h \Phi + 12|\Phi|_h \Phi = 0\}~,
\ee
while the set $\cC_+(V,h)$ of all conformal $\Spin(7)_+$ forms is the union of pointed cones obtained by removing the origin from $C_+(V,h)$. Notice that equation \eqref{eq:algebraicconditionII} is invariant under the action of $\SO(V,h)$ on $\wedge^4_+ V^\ast$, as are the two equations in \eqref{metricSpin7criterion}. Hence the sets $C_+(V,h)$ and $\cA_+(V,h)$ of solutions to these equations are also rotationally invariant. An immediate consequence of Corollary \ref{cor:Spin7metricalgebraic} is an algebraic realization of $\cA_+(V,h)$ inside $\mathbb{R}^{35}$.

\begin{cor}
The manifold $\cA_+(V,h)$ of metric $\Spin(7)_+$ forms on $(V,h)$ is diffeomorphic with the following real affine variety  given by quadratic equations inside the $35$-dimensional real vector space $\wedge^4_{+} V^{\ast}$ of self-dual four-forms:
\be
A_+(V,h) =\left\{ \Phi \in \wedge^4_{+} V^{\ast} \,\, |\,\, \Phi\Delta^h_2 \Phi+12 \Phi = 0 ~,~|\Phi|_h^2=14\right\}
\ee
\end{cor}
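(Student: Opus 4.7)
The plan is to derive this as an immediate consequence of Corollary \ref{cor:Spin7metricalgebraic}. That corollary characterizes a self-dual four-form $\Phi \in \wedge^4_+ V^{\ast}$ as being a metric $\Spin(7)_+$ form on $(V,h)$ precisely when both $\Phi \Delta^h_2 \Phi + 12 \Phi = 0$ and $|\Phi|_h^2 = 14$ hold, which are exactly the defining equations of $A_+(V,h)$. Hence $\cA_+(V,h) = A_+(V,h)$ as subsets of the ambient $\wedge^4_+ V^{\ast}$, whose dimension equals $\tfrac{1}{2}\binom{8}{4}=35$ since the Hodge operator splits $\wedge^4 V^{\ast}$ into self- and anti-self-dual pieces of equal dimension.

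Next, I would upgrade this set-theoretic identification to a diffeomorphism. To do so, endow $\cA_+(V,h)$ with its canonical smooth structure coming from the bijection $\cA_+(V,h) \simeq \SO(V,h)/\Spin(7) \simeq \mathbb{RP}^7$ established in \eqref{cAmetric}. Concretely, fix any reference metric $\Spin(7)_+$ form $\Phi_{\ast}\in\cA_+(V,h)$ and consider the orbit map $\SO(V,h)\to \wedge^4_+ V^{\ast}$, $g \mapsto g \cdot \Phi_{\ast}$. Because the stabilizer of $\Phi_{\ast}$ inside $\SO(V,h)$ is exactly the compact subgroup $\Spin(7)$ and $\SO(V,h)$ is itself compact, standard orbit theory for smooth compact Lie group actions implies that the induced map $\mathbb{RP}^7 \simeq \SO(V,h)/\Spin(7) \to \wedge^4_+ V^{\ast}$ is a smooth embedding, so its image $\cA_+(V,h)$ is a compact smooth embedded submanifold of $\wedge^4_+ V^{\ast}$ of dimension $7$, diffeomorphic to $\mathbb{RP}^7$. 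Combining with the first step, the identity map on $\wedge^4_+ V^{\ast}$ restricts to the advertised diffeomorphism $\cA_+(V,h) \xrightarrow{\sim} A_+(V,h)$.

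Finally, one should verify that the defining relations of $A_+(V,h)$ are quadratic. Using the coordinate formula \eqref{PhiDeltaomega}, the map $\Phi \mapsto \Phi \Delta_2^h \Phi$ is bilinear in its two arguments and so its vanishing $\Phi \Delta^h_2 \Phi + 12 \Phi = 0$ is a system of homogeneous quadratic equations (the linear term $12\Phi$ is coupled to the quadratic term, but viewed as an equation in $\wedge^4_+ V^{\ast}$ it is quadratic of mixed degree), while $|\Phi|_h^2 = 14$ is manifestly an inhomogeneous quadratic equation. Thus $A_+(V,h)$ is indeed a real affine variety cut out by quadratic equations inside the $35$-dimensional vector space $\wedge^4_+ V^{\ast}$, completing the proof.

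There is no genuine obstacle: the whole content sits in Corollary \ref{cor:Spin7metricalgebraic}, and the remaining steps are standard homogeneous-space bookkeeping. The only mildly non-trivial point is invoking the smooth structure on $\cA_+(V,h)$ from its realization as an $\SO(V,h)$-orbit, which is what makes the equality of sets a diffeomorphism rather than a mere bijection.
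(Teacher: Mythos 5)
Your proof is correct and follows essentially the same route the paper takes: set-theoretic equality is immediate from Corollary \ref{cor:Spin7metricalgebraic}, and the smooth structure on $\cA_+(V,h)$ comes from the orbit description $\SO(V,h)/\Spin(7)\simeq\mathbb{RP}^7$ already established in the text. You have merely made explicit the standard homogeneous-space facts that the paper leaves implicit; note only the small slip in your last paragraph where you first call $\Phi\Delta_2^h\Phi+12\Phi=0$ a system of \emph{homogeneous} quadratic equations before correctly observing in the parenthetical that it is an inhomogeneous (mixed-degree) quadratic system, which is what the term ``affine variety'' in the statement is signalling.
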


\noindent 
Since $\cA_+(V,h) \simeq \R\P^7$, this gives a real algebraic embedding of $\R\P^7$ into $\mathbb{R}^{35}$. Fix an orthonormal basis $(e^1,\hdots,e^8)$ of $(V^{\ast},h^{\ast})$ and use it to identify:
\be
V^{\ast} =\R^8\, , \qquad \wedge^4_{+} V^{\ast} = \R^{35} 
\ee

\noindent
Let $\rS^{34}_{\sqrt{14}}\subset \R^{35}$ be the sphere of radius $\sqrt{14}$ centered at the origin of $\mathbb{R}^{35}$, which we view as a real algebraic variety. Using the previous identifications, we have:
\be
A_+(\R^8,h_0) = \left\{ \Phi \in \rS^{34}_{\sqrt{14}} \,\, |\,\, \Phi\Delta^h_2 \Phi = -12 \Phi\right\}  
\ee
This realizes $\mathbb{RP}^7$ as a real algebraic variety given by quadratic equations inside $S^{34}_{\sqrt{14}}$.


\section{An algebraic potential for conformal \texorpdfstring{$\Spin(7)$}{Spin(7)} forms} 
\label{sec:Wh}


To describe the set of conformal $\Spin(7)_+$ forms on $(V,h)$, consider the following cubic function defined on the vector space of self-dual four-forms:
\ben
\label{eq:cubicfunction}
W_h:\wedge^4_{+} V^{\ast} \rightarrow \mathbb{R}\, ,\quad \Phi \mapsto  W_h(\Phi) := -\frac{\sqrt{14}}{3}\langle \Phi\diamond_h \Phi , \Phi \rangle_h + 4 \langle \Phi , \Phi \rangle^{\frac{3}{2}}_h=\frac{\sqrt{14}}{3}\langle \Phi\Delta_2^h\Phi , \Phi \rangle_h + 4 \langle \Phi , \Phi \rangle_h^{\frac{3}{2}} 
\een
The second equality above follows from \eqref{PhiSquareExp}, which implies: 
\ben
\label{PhiDiamondDelta}
\langle \Phi\diamond_h \Phi , \Phi\rangle_h=-\langle \Phi\Delta_2^h\Phi , \Phi \rangle_h
\een
since $\langle ~ ,~ \rangle_h $ is block-diagonal with respect to the rank decomposition of $\wedge V^\ast$. Using Proposition \ref{prop:Qh}, 
we can write $W_h$ as: 
\ben
\label{WS}
W_h(\Phi)=-\frac{\sqrt{14}}{48}\cS(\Phi\diamond_h \Phi\diamond_h\Phi) + \frac{1}{16} \cS(\Phi\diamond_h\Phi)^{\frac{3}{2}}~,
\een
where we noticed that $\tau(\Phi)=\Phi$ since $\Phi$ is a four-form. Using the rotational invariance of the K\"{a}hler-Atiyah trace and the rotational equivariance of the generalized products, relation \eqref{WS} implies that the potential $W_h:\wedge^4_+ V^\ast\rightarrow \R$ is invariant under the natural action of $\SO(V,h)$ on $\wedge^4_+ V^\ast$. Moreover, notice that $W_h$ is homogeneous of degree three under positive rescalings $\Phi\rightarrow \lambda \Phi$ ($\lambda>0$) of its argument and that we have $W_h(0)=0$. In particular, the restriction of $W_h$ to $\wedge^4_+ V^\ast$ descends to a section of the real line bundle $\cO(3)\rightarrow \P(\wedge^4_+ V^\ast)$.

\begin{remark}
\label{rem:WBasis}
Direct computation gives: 
\ben
\langle \Phi\Delta^h_2\Phi , \Phi \rangle_h=\frac{1}{8}\Phi_{i j k l}\Phi^{i j}_{\,\,\,\,\, m n} \Phi^{m n k l }~\mathrm{and}~\langle \Phi , \Phi \rangle_h=\frac{1}{24} \Phi_{i j k l}\Phi^{i j k l}~.
\een
Thus: 
\ben
W_h(\Phi)=\frac{\sqrt{14}}{24}\Phi_{i j k l}\Phi^{i j}_{\,\,\,\,\, m n} \Phi^{m n k l }+4\left(\frac{1}{24}\Phi_{i j k l}\Phi^{i j k l}\right)^{3/2}~,
\een
in any basis $(e^1,\ldots, e^8)$ of $(V,h)$. Notice that we use the determinant inner product of forms, with respect to which the volume form of $(V,h)$ has norm one. Also notice that $W_h$ is of class $C^2$ on $\wedge^4_+ V^\ast$, since the 3-rd power of the norm function $||\cdot||: \wedge^4_+ V^\ast\rightarrow \R$ is of class $C^2$ (and its first two differentials vanish at the origin). See \cite[Theorem 3.1]{RN}.
\end{remark}

\begin{prop}
\label{prop:d2Phi}
Let $\Phi\in \wedge^4_+ V^\ast$ be a self-dual four-form on $(V,h)$. For any $q\in \wedge^4_+ V^\ast$, we have: 
\begin{eqnarray}
& W_h(\Phi+q)=W_h(\Phi)-\sqrt{14}\left[\langle \Phi\diamond_h \Phi, q\rangle+
 \langle q\diamond_h q, \Phi\rangle_h+\frac{1}{3}\langle q\diamond_h q, q\rangle_h\right] \nonumber \\
& +4\left[\left(|\Phi|_h^2+|q|_h^2+2\langle \Phi,q\rangle_h\right)^{\frac{3}{2}}-|\Phi|_h^3\right] \label{WhExpansion}
\end{eqnarray}

\noindent
When $\Phi\neq 0$ and $|q|_h\ll 1$, the Taylor expansion of $W_h(\Phi)$ around $\Phi$ is: 
\ben
\label{WhExp}
W_h(\Phi+q)=W_h(\Phi)+\langle -\sqrt{14}\Phi\diamond_h\Phi+12 |\Phi|_h\Phi,q\rangle_h +
6|\Phi|_h|q|_h^2-\sqrt{14}\langle q\diamond_h q,\Phi\rangle_h+ 6\frac{\langle q,\Phi\rangle_h^2}{|\Phi|_h}+\O(\epsilon^3)
\een
for $\epsilon:=\frac{|q|_h}{|\Phi|_h}\ll 1$
\end{prop}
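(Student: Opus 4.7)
The plan is to obtain the exact formula \eqref{WhExpansion} by direct algebraic expansion of the two summands of $W_h$, and then to derive the Taylor expansion \eqref{WhExp} by applying the binomial series to the $3/2$ power. The main technical ingredient is the cyclic symmetry of the geometric product on four-forms provided by Lemma \ref{lemma:cyc4form}; with that tool in hand both formulas follow by bookkeeping, with no surprises.

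First I would expand the cubic pairing using trilinearity together with the identities
\[
\langle \omega_1\diamond_h\omega_2,\omega_3\rangle_h=\langle \omega_2\diamond_h\omega_3,\omega_1\rangle_h
\qquad\forall\,\omega_i\in\wedge^4 V^\ast,
\]
supplied by Lemma \ref{lemma:cyc4form}. These imply, for instance, $\langle \Phi\diamond_h q,\Phi\rangle_h=\langle q\diamond_h\Phi,\Phi\rangle_h=\langle\Phi\diamond_h\Phi,q\rangle_h$ and $\langle\Phi\diamond_h q,q\rangle_h=\langle q\diamond_h q,\Phi\rangle_h$, so that the eight monomials obtained from expanding $\langle(\Phi+q)\diamond_h(\Phi+q),\Phi+q\rangle_h$ collapse into four symmetric groups:
\[
\langle(\Phi+q)\diamond_h(\Phi+q),\Phi+q\rangle_h
=\langle\Phi\diamond_h\Phi,\Phi\rangle_h+3\langle\Phi\diamond_h\Phi,q\rangle_h+3\langle q\diamond_h q,\Phi\rangle_h+\langle q\diamond_h q,q\rangle_h.
\]
Multiplying by $-\sqrt{14}/3$ gives the cubic contribution to \eqref{WhExpansion}. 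For the second summand, bilinearity of $\langle\cdot,\cdot\rangle_h$ immediately gives $\langle\Phi+q,\Phi+q\rangle_h=|\Phi|_h^2+2\langle\Phi,q\rangle_h+|q|_h^2$, and combining both contributions yields the exact identity \eqref{WhExpansion}.

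To obtain the Taylor expansion \eqref{WhExp}, I would factor $|\Phi|_h^3$ out of the $3/2$ power and set $x:=(2\langle\Phi,q\rangle_h+|q|_h^2)/|\Phi|_h^2$, noting that $|x|=\O(\epsilon)$ when $\epsilon=|q|_h/|\Phi|_h\ll 1$ by Cauchy–Schwarz. Applying the binomial series
\[
(1+x)^{3/2}=1+\tfrac{3}{2}x+\tfrac{3}{8}x^2+\O(x^3)
\]
and discarding terms of order $\epsilon^3$, the quadratic $(2\langle\Phi,q\rangle_h+|q|_h^2)^2$ contributes only $4\langle\Phi,q\rangle_h^2$ to this order. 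A short computation then yields
\[
4\bigl[(|\Phi|_h^2+|q|_h^2+2\langle\Phi,q\rangle_h)^{3/2}-|\Phi|_h^3\bigr]
=12|\Phi|_h\langle\Phi,q\rangle_h+6|\Phi|_h|q|_h^2+\tfrac{6\langle\Phi,q\rangle_h^2}{|\Phi|_h}+\O(\epsilon^3).
\]
Combining the linear‑in‑$q$ pieces from this expansion with the cubic contribution $-\sqrt{14}\langle\Phi\diamond_h\Phi,q\rangle_h$ and collecting by order in $q$ gives \eqref{WhExp}.

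The step requiring the most care is ensuring that the $\O(\epsilon^3)$ remainder is controlled \emph{uniformly} in the direction of $q$ rather than only pointwise; this is automatic because the function $x\mapsto(1+x)^{3/2}$ is $C^2$ near the origin, and the smoothness of $W_h$ away from $\Phi=0$ (remarked upon in Remark \ref{rem:WBasis}) guarantees the remainder is genuinely of cubic order in $|q|_h$ for $\Phi\neq 0$. Aside from this, the argument is a sequence of elementary manipulations made transparent by the cyclic symmetry of Lemma \ref{lemma:cyc4form}.
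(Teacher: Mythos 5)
Your proof is correct and follows essentially the same route as the paper's: expand the cubic pairing via trilinearity together with the cyclic identity of Lemma \ref{lemma:cyc4form}, expand the quadratic pairing by bilinearity, then factor out $|\Phi|_h^3$ and apply the binomial series $(1+x)^{3/2}=1+\tfrac{3}{2}x+\tfrac{3}{8}x^2+\O(x^3)$, keeping only $4\langle\Phi,q\rangle_h^2$ from the square of $x$ at order $\epsilon^2$. The small additions you make — invoking Cauchy--Schwarz to bound $x=\O(\epsilon)$ and remarking on uniformity of the remainder — are harmless refinements that the paper omits but do not change the argument.
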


\begin{proof}
We have:
\be
W_h(\Phi+q)=-\frac{\sqrt{14}}{3}\langle (\Phi+q)\diamond_h (\Phi+q) , \Phi+q \rangle_h + 4 \langle \Phi +q, \Phi +q\rangle^{\frac{3}{2}}_h~,
\ee
Using  Lemma \ref{lemma:cyc4form}, we compute: 
\be
\langle (\Phi+q)\diamond_h (\Phi+q) , \Phi+q \rangle_h=\langle \Phi\diamond_h\Phi,\Phi\rangle_h+3\langle \Phi\diamond_h \Phi, q\rangle+
3 \langle q\diamond_h q, \Phi\rangle_h+\langle q\diamond_h q, q\rangle_h
\ee
and: 
\be
\langle \Phi +q, \Phi +q\rangle_h=|\Phi|_h^2+|q|_h^2+2\langle \Phi,q\rangle_h 
\ee
Combining these relations gives \eqref{WhExpansion}. When $\epsilon=\frac{|q|_h}{|\Phi|_h}\ll 1$, we have:
\beqa
& \left(|\Phi|_h^2+|q|_h^2+2\langle \Phi,q\rangle_h\right)^{\frac{3}{2}} = |\Phi|_h^3\left(1+\frac{|q|_h^2+2\langle \Phi,q\rangle_h}{|\Phi|_h^2}\right)^{\frac{3}{2}}\nn\\
& = |\Phi|_h^3 \left[1+\frac{3}{2}\frac{|q|_h^2+2\langle \Phi,q\rangle_h}{|\Phi|_h^2}+ \frac{3}{8}\left(\frac{|q|_h^2+2\langle \Phi,q\rangle_h}{|\Phi|_h^2}\right)^2 \right]+\O\left(\epsilon^3\right) 
\eeqa
Using this in \eqref{WhExpansion} gives:
\beqa
& W_h(\Phi+q) - W_h(\Phi) =  \\
& = -\sqrt{14}\left[\langle \Phi\diamond_h \Phi, q\rangle+
 \langle q\diamond_h q, \Phi\rangle_h\right]+6|\Phi|_h (|q|_h^2+2\langle \Phi,q\rangle_h) + \frac{3}{2}\frac{(|q|_h^2+2\langle \Phi,q\rangle_h)^2}{|\Phi|_h}+\O(\epsilon^3) \\
& = W_h(\Phi)+\langle -\sqrt{14}\Phi\diamond_h\Phi+12 |\Phi|_h\Phi,q\rangle_h + 6|\Phi|_h|q|_h^2-\sqrt{14}\langle q\diamond_h q,\Phi\rangle_h+ 6\frac{\langle q,\Phi\rangle_h^2}{|\Phi|_h}+\O(\epsilon^3) 
\eeqa
\end{proof}

\noindent 
Consider a nonzero self-dual four-form $\Phi\in \wedge_{+}^4 V^{\ast}\setminus \{0\}$. We denote by:
\be
\dd_{\Phi} W_h \colon \wedge^4_{+} V^{\ast} \to \R\, , \qquad q \mapsto (\dd_{\Phi} W_h) (q)
\ee
the differential of $W_h$ computed at $\Phi$ and by:
\be
\dd^2_{\Phi}W_h \colon \wedge^4_{+} V^{\ast} \odot \wedge^4_{+} V^{\ast}\to \R
\ee
the second differential of $W_h$ at $\Phi$. The latter coincides with the Euclidean Hessian of $W_h$ at $\Phi$ computed relative to the scalar product $\langle~,~\rangle_h$. 

\begin{cor}
\label{cor:dWh}
For any nonzero self-dual four-form $\Phi\in \wedge^4_+ V^\ast \setminus \{0\}$, we have: 
\ben
\label{dWh}
(\dd_{\Phi} W_h) (q)=\langle\sqrt{14}\Phi\Delta_2^h\Phi + 12 |\Phi|_h \Phi, q\rangle_h \quad \forall q\in \wedge^4_+ V^\ast
\een
and:
\ben
\label{ddWh}
(\dd^2_{\Phi}W_h) (q_1 , q_2) =  \sqrt{14}\,\langle  q_1\Delta_2^h q_2+q_2\Delta_2^h q_1 , \Phi \rangle_h + 12 \vert \Phi\vert_h  \langle   q_1, q_2 \rangle_h + \frac{12}{\vert\Phi\vert_h}  \langle q_1 , \Phi\rangle_h \langle \Phi, q_2 \rangle_h  
\een

\noindent
for every $q_1, q_2\in \wedge^4_{+} V^{\ast}$.
\end{cor}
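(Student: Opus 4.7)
The plan is to extract both differentials directly from the second-order Taylor expansion already established in Proposition~\ref{prop:d2Phi}. Since $W_h$ is of class $C^2$ on $\wedge^4_+V^\ast$ by Remark~\ref{rem:WBasis} and $\Phi \neq 0$, the expansion \eqref{WhExp} determines $\dd_\Phi W_h$ as the coefficient of its linear term in $q$ and $\dd^2_\Phi W_h$ as the symmetric bilinear form obtained by doubling the polarization of its quadratic term. Hence the entire argument reduces to reading off those two terms from \eqref{WhExp} and rewriting them in the claimed form.

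For the first differential, the linear coefficient in \eqref{WhExp} is $\langle -\sqrt{14}\,\Phi\diamond_h\Phi + 12|\Phi|_h\Phi,\, q\rangle_h$. To put this in the form \eqref{dWh} I would invoke the identity $\langle \Phi\diamond_h\Phi, q\rangle_h = -\langle \Phi\Delta_2^h\Phi, q\rangle_h$ for any four-form $q$; this is a direct consequence of the expansion \eqref{omegadiamondomega} of $\Phi\diamond_h\Phi$ together with the block-diagonality of $\langle\,,\,\rangle_h$ with respect to the rank decomposition of $\wedge V^\ast$, since only the rank-four component $-\Phi\Delta_2^h\Phi$ of $\Phi\diamond_h\Phi$ pairs nontrivially with a four-form. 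Substituting gives exactly \eqref{dWh}; this is the same sign manipulation already used in \eqref{PhiDiamondDelta} and in the rewriting of $W_h$ in \eqref{eq:cubicfunction}.

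For the second differential, the quadratic term in \eqref{WhExp} reads $6|\Phi|_h|q|_h^2 - \sqrt{14}\langle q\diamond_h q,\Phi\rangle_h + 6\langle q,\Phi\rangle_h^2/|\Phi|_h$, so $(\dd^2_\Phi W_h)(q,q)$ equals twice this expression. I would then polarize through the standard identity $B(q_1,q_2) = \tfrac{1}{2}\bigl(B(q_1+q_2,q_1+q_2) - B(q_1,q_1) - B(q_2,q_2)\bigr)$: the summand $|q|_h^2$ polarizes to $\langle q_1,q_2\rangle_h$, the summand $\langle q\diamond_h q,\Phi\rangle_h$ polarizes to $\tfrac{1}{2}\langle q_1\diamond_h q_2 + q_2\diamond_h q_1,\Phi\rangle_h$, and the summand $\langle q,\Phi\rangle_h^2$ polarizes to $\langle q_1,\Phi\rangle_h\langle q_2,\Phi\rangle_h$. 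Applying the same sign-flip $\langle q_i\diamond_h q_j,\Phi\rangle_h = -\langle q_i\Delta_2^h q_j,\Phi\rangle_h$ to the middle term then assembles exactly the three summands on the right-hand side of \eqref{ddWh}.

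The derivation is routine bookkeeping, and no substantive obstacle arises. The only point requiring attention is the sign that appears when passing between $\diamond_h$ and $\Delta_2^h$ on four-forms (via \eqref{omegadiamondomega}), plus the factors of $\tfrac{1}{2}$ and $2$ that govern the relation between a quadratic form's diagonal value, its polarization, and the second differential.
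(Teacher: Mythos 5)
Your proposal is correct and takes exactly the same route as the paper, which simply says that the corollary ``follows immediately from \eqref{WhExp} by polarization''; you have spelled out the polarization and the sign flip between $\diamond_h$ and $\Delta_2^h$ on degree-four components, but these are precisely the steps the paper leaves implicit.
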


\begin{proof}
Follows immediately from \eqref{WhExp} by polarization.
\end{proof}

\begin{remark}
Using \cite[Theorem 3.1]{RN}, one easily checks that $(\dd^2 W_h)(0)=0$. This also follows from the fact that $W_h$ is of class $C^2$. The Cauchy–Schwarz inequality gives:
\be
\frac{1}{\vert\Phi\vert_h} \vert \langle q_1 , \Phi\rangle_h \langle \Phi, q_2 \rangle_h| \leq |\Phi|_h |q_1|_h |q_2|_h 
\ee
and hence the $\lim_{\Phi \to 0} \dd^2_{\Phi}W_h$ exists and equals zero. 
\end{remark}

\begin{thm}
\label{thm:PotentialVh}
A self-dual four-form $\Phi\in \wedge^4_{+} V^{\ast}\setminus \{0\}$ is a conformal $\Spin(7)_+$ form on $(V,h)$ if and only if it is a critical point of the function $W_h$. In this case, we have: 
\be
W_h(\Phi)=0 
\ee
\end{thm}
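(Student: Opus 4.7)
The plan is to read off the critical point equation directly from Corollary \ref{cor:dWh} and then identify it with the algebraic equation of Theorem \ref{thm:Spin7algebraic}, which characterizes conformal $\Spin(7)_+$ forms. The equality $W_h(\Phi)=0$ at a critical point will then follow by substituting the critical point equation into the definition \eqref{eq:cubicfunction}.

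First, formula \eqref{dWh} gives
\begin{equation*}
(\dd_\Phi W_h)(q) \;=\; \bigl\langle \sqrt{14}\,\Phi\Delta_2^h\Phi + 12\,|\Phi|_h\,\Phi,\; q \bigr\rangle_h \qquad \forall\, q\in\wedge^4_+ V^\ast.
\end{equation*}
By \eqref{PhiDeltaPhisd}, the polyform $\Phi\Delta_2^h\Phi$ is self-dual whenever $\Phi$ is, so the vector $\sqrt{14}\,\Phi\Delta_2^h\Phi + 12\,|\Phi|_h\,\Phi$ lies in $\wedge^4_+ V^\ast$. Since $\langle\,\cdot\,,\,\cdot\,\rangle_h$ restricts to a nondegenerate inner product on $\wedge^4_+ V^\ast$, the condition $\dd_\Phi W_h = 0$ on $\wedge^4_+ V^\ast$ is equivalent to
\begin{equation*}
\sqrt{14}\,\Phi\Delta_2^h\Phi + 12\,|\Phi|_h\,\Phi \;=\; 0,
\end{equation*}
which is precisely equation \eqref{eq:algebraicconditionII}. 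By Theorem \ref{thm:Spin7algebraic}, a nonzero self-dual four-form satisfies this equation if and only if it is a conformal $\Spin(7)_+$ form on $(V,h)$. This proves the first assertion.

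For the value of the potential at a critical point, pair the critical point equation with $\Phi$ to obtain
\begin{equation*}
\sqrt{14}\,\langle \Phi\Delta_2^h\Phi,\Phi\rangle_h \;=\; -12\,|\Phi|_h\,\langle\Phi,\Phi\rangle_h \;=\; -12\,|\Phi|_h^3.
\end{equation*}
Substituting into the rightmost expression for $W_h$ in \eqref{eq:cubicfunction} yields
\begin{equation*}
W_h(\Phi) \;=\; \tfrac{1}{3}\bigl(-12\,|\Phi|_h^3\bigr) + 4\,|\Phi|_h^3 \;=\; -4\,|\Phi|_h^3 + 4\,|\Phi|_h^3 \;=\; 0,
\end{equation*}
as claimed. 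No step presents a real obstacle; the whole argument is essentially a repackaging of Corollary \ref{cor:dWh}, the self-duality statement \eqref{PhiDeltaPhisd}, and the algebraic characterization in Theorem \ref{thm:Spin7algebraic}.
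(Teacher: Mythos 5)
Your proof is correct and follows essentially the same route as the paper: apply Corollary \ref{cor:dWh} to read off the critical point equation, invoke nondegeneracy of $\langle\cdot,\cdot\rangle_h$ on $\wedge^4_+V^\ast$ to pass from the vanishing of the pairing to equation \eqref{eq:algebraicconditionII}, appeal to Theorem \ref{thm:Spin7algebraic} for the characterization, and substitute back to get $W_h(\Phi)=0$. Your one addition — explicitly citing \eqref{PhiDeltaPhisd} to check that $\sqrt{14}\,\Phi\Delta_2^h\Phi + 12|\Phi|_h\Phi$ actually lies in $\wedge^4_+V^\ast$ — is a worthwhile clarification: since the differential is tested only against self-dual $q$, nondegeneracy of the restricted inner product forces only the \emph{self-dual part} of the gradient vector to vanish, and \eqref{PhiDeltaPhisd} is exactly what guarantees there is no anti-self-dual part to worry about. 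The paper uses this silently; you made it explicit.
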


\begin{proof}
Since $\langle~,~\rangle_h$ restricts to a scalar product on $\wedge^4_+ V^\ast$, Corollary \eqref{cor:dWh} implies that $\dd_\Phi W_h$ vanishes if and only if $\Phi$ satisfies equation \eqref{eq:algebraicconditionII}, which by Theorem \ref{thm:Spin7algebraic} happens if and only if $\Phi$  is a conformal $\Spin(7)_+$ form on $(V,h)$. In this case, we have:
\be
W_h(\Phi)  = \frac{\sqrt{14}}{3}\langle \Phi\Delta_2^h \Phi , \Phi \rangle_h + 4 |\Phi|_h^3 =-4|\Phi|^3_h +4 |\Phi|_h^3 =0
\ee
\end{proof}

\subsection{Representation-theoretic analysis of the Hessian of $W_h$}

Let $\Phi\in \wedge^4_+ V^\ast$ be a conformal $\Spin(7)_+$ form on $(V,h)$. We denote the induced metric of $\Phi$ by $h_{\Phi}$. Recall that $\Phi$ is a \emph{metric} $\Spin(7)_+$ form with respect to $h_{\Phi}$. Consider the orthogonal decomposition of $\wedge^4_{+} V^{\ast}$ into irreducible representations under the linear action of the $\Spin(7)$ stabilizer of $\Phi$ which is obtained by restricting the obvious action of $\SO(V,h)$ on $\wedge^4_{+} V^{\ast}$ (see, for example, \cite{KarigiannisFlows}):
\ben
\label{eq:odeg}
\wedge^4_+ V^{\ast} = \wedge^4_1 V^{\ast} \oplus \wedge^4_7 V^{\ast} \oplus \wedge^4_{27} V^{\ast} 
\een
The subscript in this decomposition denotes the real dimension of the corresponding irreducible representation of $\Spin(7)$. Notice that $\Phi\in \wedge^4_1 V^\ast$.  Since this subspace is one-dimensional, we have: 
\be
\wedge^4_1 V^\ast=\R \Phi 
\ee
Thus any $q\in \wedge^4_1 V^\ast$ is of the form $q=\lambda(q) \Phi$ with $\lambda(q)\in \R$. Taking norms gives $|\lambda(q)|=\frac{|q|_h}{|\Phi|_h}$ and hence:
\ben
\label{q1}
q=\epsilon(q) \frac{|q|_h}{|\Phi|_h}\Phi \quad \forall q\in \wedge^4_1 V^\ast~,
\een
where $\epsilon(q):=\sign(\lambda(q))$. As Shown in \cite[Proposition 2.8]{KarigiannisFlows} (see also \cite[Proposition 2.6]{DwivediGrad}), the invariant subspaces appearing in the right-hand side of \eqref{eq:odeg} can be written as eigenspaces of the operator $\Lambda^h_\Phi$ of  Definition \ref{def:Lambda} (see Remark \ref{rem:Lambda}):
\beqa
& \wedge^4_1 V^\ast = \left\{q\in \wedge^4_{+} V^{\ast}\,\, \vert\,\, \Phi \Delta^{h_{\Phi}}_2 q = -12 q \right\}\, , \quad 
\wedge^4_7 V^\ast = \left\{ q \in \wedge^4_{+} V^{\ast}\,\, \vert\,\, \Phi \Delta^{h_{\Phi}}_2 q = - 6 q \right\}\\
& \wedge^4_{27} V^\ast = \left\{ q\in \wedge^4_{+} V^{\ast}\,\, \vert\,\, \Phi \Delta^{h_{\Phi}}_2 q= 2 q\right\} 
\eeqa
We also have $\wedge^4_{-} V^\ast=\wedge^4_{35} V^\ast=\ker \Lambda_\Phi^h$. Using Lemma \ref{lemma:conformalconstant}, we write the relations above in terms of $h$:
\beqan
\label{InvSubspaces}
& \wedge^4_1 V^\ast = \left\{ q \in \wedge^4_{+} V^{\ast}\,\, \vert\,\, \Phi\Delta^{h}_2 q = -  \frac{12}{\sqrt{14}} \,\vert \Phi\vert_h q \right\}\, , \quad \wedge^4_7 V^\ast = \left\{ q \in \wedge^4_{+} V^{\ast}\,\, \vert\,\, \Phi\Delta^{h}_2 q = -  \frac{6}{\sqrt{14}} \,\vert \Phi\vert_h  q \right\}\nn\\
& \wedge^4_{27} V^\ast = \left\{q\in \wedge^4_{+} V^{\ast}\,\, \vert\,\, \Phi \Delta^{h}_2 q=  \frac{2}{\sqrt{14}} \,\vert \Phi\vert_h q \right\} 
\eeqan

\begin{prop}
\label{prop:ddWh}
Let $\Phi\in \wedge^4_+ V^\ast$ be a conformal $\Spin(7)_+$ form on $(V,h)$. Then $\dd^2_\Phi W_h$ is block-diagonal with respect to the decomposition \eqref{eq:odeg}. Moreover, the restrictions of $\dd^2_\Phi W_h$ to the subspaces appearing in this decomposition are given by: 
\ben
\label{HessRestrictions}
(\dd^2_{\Phi}W_h)(q_1,q_2) =
\threepartdef{0}{q_1,q_2\in \wedge^4_1 V^\ast}{0}{q_1,q_2\in \wedge^4_7 V^\ast}{16 |\Phi|_h \langle q_1,q_2\rangle_h}{q_1 , q_2\in \wedge^4_{27} V^\ast} 
\een
\end{prop}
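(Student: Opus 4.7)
The plan is to start from the explicit formula \eqref{ddWh} for $\dd^2_\Phi W_h$ and then use two structural facts: first, the symmetry $q_1 \Delta_2^h q_2 = q_2 \Delta_2^h q_1$ on four-forms, which follows from \eqref{gradedasym} with $p=q=4$ and $k=2$; and second, the cyclic identity \eqref{cyc4form} from Lemma \ref{lemma:cyc4form}, which gives $\langle q_1 \Delta_2^h q_2, \Phi\rangle_h = \langle \Phi\Delta_2^h q_2, q_1\rangle_h$. Together these let us rewrite the first term of the Hessian as $2\sqrt{14}\,\langle \Phi\Delta_2^h q_2, q_1\rangle_h$, reducing everything to an operator-theoretic statement about $\Phi\Delta_2^h$ acting on $\wedge^4_+ V^\ast$.

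Next I would invoke \eqref{InvSubspaces}, which shows that $\Phi\Delta_2^h$ acts as a scalar on each of the three irreducible summands $\wedge^4_1 V^\ast,\wedge^4_7 V^\ast,\wedge^4_{27} V^\ast$, with respective eigenvalues $-\tfrac{12}{\sqrt{14}}|\Phi|_h$, $-\tfrac{6}{\sqrt{14}}|\Phi|_h$, $\tfrac{2}{\sqrt{14}}|\Phi|_h$. Since these subspaces are mutually $\langle\cdot,\cdot\rangle_h$-orthogonal and $\Phi\in \wedge^4_1 V^\ast$, we immediately get that the first Hessian term vanishes between different summands, that $\langle q,\Phi\rangle_h = 0$ whenever $q\in \wedge^4_7 V^\ast\oplus \wedge^4_{27} V^\ast$, and that the middle term $12|\Phi|_h\langle q_1,q_2\rangle_h$ is also block-diagonal. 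This establishes the block-diagonality claim with essentially no computation.

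The remainder of the proof is the three diagonal blocks. For $q_1,q_2\in \wedge^4_1 V^\ast$, I write $q_i = a_i \Phi$ and use $\Phi\Delta_2^h \Phi = -\tfrac{12}{\sqrt{14}}|\Phi|_h \Phi$ (which follows directly from Theorem \ref{thm:Spin7algebraic}); the three summands of \eqref{ddWh} contribute $-24 a_1 a_2 |\Phi|_h^3$, $+12 a_1 a_2 |\Phi|_h^3$, and $+12 a_1 a_2 |\Phi|_h^3$ respectively, and cancel exactly. For $q_1,q_2\in \wedge^4_7 V^\ast$, the last term of \eqref{ddWh} vanishes by orthogonality, and the first two contribute $-12|\Phi|_h\langle q_1,q_2\rangle_h$ and $+12|\Phi|_h\langle q_1,q_2\rangle_h$, again cancelling. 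For $q_1,q_2\in \wedge^4_{27} V^\ast$, the third term again vanishes and the first two give $4|\Phi|_h\langle q_1,q_2\rangle_h$ and $12|\Phi|_h\langle q_1,q_2\rangle_h$, yielding the claimed $16|\Phi|_h\langle q_1,q_2\rangle_h$.

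The only subtle point I anticipate is making sure the Hessian formula \eqref{ddWh} is applied with the correct normalization and sign conventions for $\Delta_2^h$, and in particular that the eigenvalues quoted in \eqref{InvSubspaces} are consistent with the normalization used in \eqref{Lambda}; everything else is bookkeeping. The proof is essentially representation-theoretic: the three cancellations on $\wedge^4_1$ and $\wedge^4_7$ reflect the fact that these are precisely the tangent directions to the $\GL_+(V)$-orbit of $\Phi$ that correspond to rescaling $\Phi$ and to the action of $\SO(V,h)/\Spin(7)$, along which $W_h$ is invariant to second order, while $\wedge^4_{27} V^\ast$ is the genuinely transverse direction along which $W_h$ grows quadratically.
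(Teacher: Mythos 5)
Your proof is correct, and the core machinery (the cyclic identity from Lemma~\ref{lemma:cyc4form}, the eigenvalue description \eqref{InvSubspaces} of $\Phi\Delta_2^h$, and the direct evaluation on each summand) matches the paper's. The one place you genuinely diverge is the block-diagonality step: the paper argues via Schur's lemma, noting that $W_h$ is $\SO(V,h)$-invariant and therefore $\dd^2_\Phi W_h$ is invariant under the $\Spin(7)$ stabilizer of $\Phi$, so it cannot pair inequivalent irreducible summands; you instead rewrite the first Hessian term as $2\sqrt{14}\langle \Phi\Delta_2^h q_2, q_1\rangle_h$ via the cyclic identity and read block-diagonality off from the fact that $\Phi\Delta_2^h$ acts as a scalar on each orthogonal summand and that $\langle q,\Phi\rangle_h$ vanishes outside $\wedge^4_1 V^\ast$. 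Your route is slightly more hands-on and avoids a separate invocation of Schur's lemma, at the cost of relying on the same representation-theoretic input (the orthogonality of \eqref{eq:odeg} and the eigenvalue structure); the paper's route is slicker and also lets it compute only the diagonal $(\dd^2_\Phi W_h)(q,q)$ and polarize, whereas you handle $q_1\neq q_2$ directly. Both are valid and of comparable length. Your numerical checks on the three blocks ($-24+12+12=0$, $-12+12=0$, $4+12=16$, all times $|\Phi|_h\langle q_1,q_2\rangle_h$ in suitable normalization) agree with the paper's \eqref{HessDiagRestructions|}.
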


\begin{proof}
Since $W_h$ is $\SO(V,h)$-invariant, its second differential $\dd^2_\Phi W_h:\wedge^4_+ V^\ast\otimes \wedge^4_+V^\ast\rightarrow \R$ computed at $\Phi$ is invariant under the tensor product action of the $\Spin(7)_+$ stabilizer of $\Phi$ on the argument $q_1\otimes q_2\in \wedge^4_+ V^\ast \otimes \wedge^4_+ V^\ast$. By Schur's lemma, this implies that $\dd^2_{\Phi}W_h$ is block-diagonal relative to the decomposition \eqref{eq:odeg}. To compute the restrictions to each subspace, it suffices to compute the restrictions of the diagonal of the symmetric bilinear form $\dd^2_{\Phi}W_h$, which is given by (see \eqref{ddWh}): 
\ben
\label{HessDiag}
(\dd^2_{\Phi}W_h)(q,q)  =  2\sqrt{14}\,\langle  q\Delta_2^h q  , \Phi \rangle_h + 12 \vert \Phi\vert_h  |q|^2_h + \frac{12}{|\Phi|_h}  |\langle q , \Phi\rangle_h|^2 \quad \forall q\in \wedge^4_+ V^\ast 
\een
Using Lemma \ref{lemma:cyc4form} and relations \eqref{InvSubspaces}, we find: 
\be
\sqrt{14}\langle  q\Delta_2^h q  , \Phi \rangle_h=\sqrt{14}\langle  \Phi \Delta_2^h q  , q \rangle_h=
\threepartdef{-12 |\Phi|_h |q|_h^2}{q\in \wedge^4_1 V^\ast}{-6 |\Phi|_h |q|_h^2}{q\in \wedge^4_7 V^\ast}{+2 |\Phi|_h |q|_h^2}{q\in \wedge^4_{27} V^\ast} 
\ee
Substituting this in \eqref{HessDiag} gives: 
\ben
\label{HessDiagRestructions|}
(\dd^2_{\Phi}W_h)(q,q) =
\threepartdef{0}{q\in \wedge^4_1 V^\ast}{0}{q\in \wedge^4_7 V^\ast}{16|\Phi|_h |q|_h^2}{q\in \wedge^4_{27} V^\ast}~,
\een
where for $q\in \wedge^4_1 V^\ast$ we used \eqref{q1}. The relation \eqref{HessRestrictions} follows by polarizing the identities above. 
\end{proof}

\noindent
Proposition \ref{prop:ddWh} shows that the Hessian of $W_h$ at a conformal $\Spin(7)_+$ form $\Phi \in \cC_+(V,h)$ restricts to zero on the tangent space at $\Phi$ to the cone $C_+(V,h)$, which decomposes as: 
\be
T_\Phi C_+(V,h)=\wedge^4_1 V^\ast \oplus \wedge^4_7 V^\ast
\ee
and that it is positive definite when restricted to the normal space to the cone at the point $\Phi$, which is given by: 
\be
N_\Phi C_+(V,h)=[T_\Phi C_+(V,h)]^\perp=\wedge^4_{27} V^\ast 
\ee
The fact that the Hessian must restrict to zero on $T_\Phi C_+(V,h)$ is clear from the fact that $W_h$ is homogeneous under positive homotheties of $\Phi$ and invariant under the action $\SO(V,h)$ on $\wedge^4_+ V^\ast$. Since the critical locus of $W_h$ coincides with $C_+(V,h)$, we obtain the following result.

\begin{cor}
\label{cor:Hessian}
The potential $W_h$ is a Morse-Bott function on the complement of the origin in $\wedge^4_+ V^\ast$. Moreover, the restriction of the differential $\dd^2 W_h$ to the normal bundle of any generator of the cone $C_+(V,h)$ is positive-definite. In particular, the critical set of $W_h$ consists of local minima of the potential. The critical set is contained in the zero locus of $W_h$. It coincides with the cone $C_+(V,h)$ and consists of the origin and the conformal $\Spin(7)_+$ forms of $(V,h)$. 
\end{cor}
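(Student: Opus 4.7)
The plan is to split the corollary into parts. The identification of the critical set with $C_+(V,h)$ and the vanishing of $W_h$ on it are already in hand: Theorem \ref{thm:PotentialVh} shows that every nonzero critical point of $W_h$ is a conformal $\Spin(7)_+$ form at which $W_h$ vanishes, and the origin is a critical point since $W_h$ is of class $C^2$ with vanishing first two differentials at $0$ (cf.\ Remark \ref{rem:WBasis}). What remains is the Morse--Bott property on $\wedge^4_+ V^\ast\setminus\{0\}$ and the positive definiteness of $\dd^2 W_h$ on the normal bundle to the cone, from which the local-minimum conclusion at nonzero critical points is immediate.

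The core argument will be a short dimension count. First I would observe, using Corollary \ref{cor:conformal} and the identification $\cA_+(V,h)\simeq \mathbb{RP}^7$, that $C_+(V,h)\setminus\{0\} = \R_{>0}\cdot \cA_+(V,h)$ is a smooth submanifold of $\wedge^4_+ V^\ast$ of dimension $1+7=8$. Next, a standard argument shows that at any $\Phi\in C_+(V,h)\setminus\{0\}$ the tangent space $T_\Phi C_+(V,h)$ is automatically contained in $\ker \dd^2_\Phi W_h$: differentiating the identity $\dd W_h|_{\gamma(t)}=0$ along any smooth curve $\gamma$ in the critical submanifold with $\gamma(0)=\Phi$ yields $\dd^2_\Phi W_h(\gamma'(0),\cdot)=0$.

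Proposition \ref{prop:ddWh} identifies $\ker \dd^2_\Phi W_h$ with $\wedge^4_1 V^\ast \oplus \wedge^4_7 V^\ast$, which has dimension $1+7=8$, and shows that $\dd^2_\Phi W_h$ restricts to the positive-definite form $16|\Phi|_h \langle\cdot,\cdot\rangle_h$ on the $\langle\cdot,\cdot\rangle_h$-orthogonal complement $\wedge^4_{27} V^\ast$. Combining the previous inclusion with matching dimensions forces the equality
\[
T_\Phi C_+(V,h) = \wedge^4_1 V^\ast \oplus \wedge^4_7 V^\ast, \qquad N_\Phi C_+(V,h) = \wedge^4_{27} V^\ast.
\]
This is precisely the Morse--Bott condition at $\Phi$, and it gives the positivity statement on the normal bundle of any generator of the cone, since $|\Phi|_h>0$. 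Because $W_h$ is constant along its critical submanifold and the transverse Hessian is strictly positive, every nonzero critical point is a local minimum.

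The main subtle point is the identification of $T_\Phi C_+(V,h)$ with the specific direct sum $\wedge^4_1 V^\ast \oplus \wedge^4_7 V^\ast$. A direct geometric derivation would recognise $\wedge^4_1 V^\ast$ as the homothety direction and $\wedge^4_7 V^\ast$ as the tangent to the $\SO(V,h)$-orbit through $\Phi$, whose dimension equals $\dim\SO(8)-\dim\Spin(7)=7$; but the dimension count above sidesteps this and yields the required equality for free, which is why I would organise the proof that way rather than working the orbit tangent out explicitly.
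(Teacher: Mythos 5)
Your proof is correct and reaches the paper's conclusions. The one place where you take a genuinely different route is in establishing the identification $T_\Phi C_+(V,h)=\wedge^4_1 V^\ast\oplus\wedge^4_7 V^\ast$ at a conformal $\Spin(7)_+$ form $\Phi$: you combine the general fact that the tangent space to a critical submanifold lies in $\ker\dd^2_\Phi W_h$ with the dimension count $\dim C_+(V,h)\setminus\{0\}=8=\dim\bigl(\wedge^4_1 V^\ast\oplus\wedge^4_7 V^\ast\bigr)$, which forces the inclusion to be an equality. The paper instead asserts the decomposition from the geometric picture directly, taking $\wedge^4_1 V^\ast=\R\Phi$ to be the homothety direction and $\wedge^4_7 V^\ast$ to be the tangent to the $\SO(V,h)$-orbit of $\Phi$, and explains the vanishing of the Hessian on $T_\Phi C_+(V,h)$ by the homogeneity of $W_h$ under positive rescalings together with its $\SO(V,h)$-invariance. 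Your route is more self-contained in that it sidesteps any explicit description of the orbit tangent and proves the identification outright from Proposition~\ref{prop:ddWh}; the paper's route is more conceptual and makes the vanishing transparent from symmetry. Both rest on Proposition~\ref{prop:ddWh} for the strict positivity on $\wedge^4_{27}V^\ast$, and both implicitly use that $C_+(V,h)\setminus\{0\}$ is an embedded $8$-dimensional submanifold, which in turn holds because $\cA_+(V,h)$ is an orbit of the compact group $\SO(V,h)$ and hence an embedded copy of $\SO(8)/\Spin(7)\simeq\mathbb{RP}^7$, consistent with the paper's diffeomorphism $C_+(V,h)\simeq\R^8/\Z_2$.
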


\noindent Note that $W_h$ is not coercive on $\wedge^4_+ V^\ast$, since its restriction to the cone $C_+(V,h)$ is identically zero. Restricting to the sphere $S_{\sqrt{14}}(\wedge^4_+ V^\ast)$, we find: 

\begin{cor}
\label{cor:HessianMetric}
The restriction:
\begin{equation*}
{\hat W}_h\colon S_{\sqrt{14}}(\wedge^4_+ V^\ast)\rightarrow \R
\end{equation*}
of $W_h$ to the 34-dimensional sphere $S_{\sqrt{14}}(\wedge^4_+ V^\ast)$ of radius $\sqrt{14}$ in the 35-dimensional Euclidean space $(\wedge^4_+ V^\ast, \langle~,~\rangle_h)$ has a local minimum at each metric $\Spin(7)_+$ form on $(V,h)$ and the restricted potential ${\hat W}_h$ vanishes at each such point. 
\end{cor}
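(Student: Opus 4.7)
The plan is to deduce the statement from the Morse--Bott structure already established for the ambient potential $W_h$ in Corollary \ref{cor:Hessian}. First, I would recall that by Corollary \ref{cor:Spin7metricalgebraic} a metric $\Spin(7)_+$ form $\Phi$ on $(V,h)$ is a conformal $\Spin(7)_+$ form with $|\Phi|_h = \sqrt{14}$, so $\Phi \in S_{\sqrt{14}}(\wedge^4_+ V^\ast)$. Moreover, Theorem \ref{thm:PotentialVh} gives $W_h(\Phi) = 0$ and $\dd_\Phi W_h = 0$, so ${\hat W}_h(\Phi) = 0$ and $\Phi$ is automatically a critical point of the restriction ${\hat W}_h$.

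The core step is to compute the intrinsic Hessian of ${\hat W}_h$ at $\Phi$. Because $\Phi$ is a critical point of the ambient function $W_h$, the standard formula relating the intrinsic Hessian of a restriction to the ambient Hessian shows that $\dd^2_\Phi {\hat W}_h$ coincides with the restriction of $\dd^2_\Phi W_h$ to the tangent space $T_\Phi S_{\sqrt{14}}(\wedge^4_+ V^\ast) = \Phi^\perp$. The outward radial direction at $\Phi$ spans the line $\R\Phi = \wedge^4_1 V^\ast$, so under the $\Spin(7)_+$-equivariant decomposition \eqref{eq:odeg} one has
\be
T_\Phi S_{\sqrt{14}}(\wedge^4_+ V^\ast) = \wedge^4_7 V^\ast \oplus \wedge^4_{27} V^\ast~.
\ee
Applying Proposition \ref{prop:ddWh}, this restriction vanishes identically on $\wedge^4_7 V^\ast$ and equals $16\sqrt{14}\,\langle \cdot,\cdot\rangle_h$ on $\wedge^4_{27} V^\ast$, hence is positive semi-definite with kernel exactly $\wedge^4_7 V^\ast$.

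To conclude, I would identify this kernel with the tangent space at $\Phi$ to the submanifold $\cA_+(V,h)\subset S_{\sqrt{14}}(\wedge^4_+ V^\ast)$ of metric $\Spin(7)_+$ forms. Since $\cA_+(V,h)$ is the transverse intersection of the sphere with the cone $C_+(V,h)$, whose tangent space at $\Phi$ equals $\wedge^4_1 V^\ast \oplus \wedge^4_7 V^\ast$ by the discussion preceding Corollary \ref{cor:Hessian}, removing the radial line yields $T_\Phi \cA_+(V,h) = \wedge^4_7 V^\ast$. Because ${\hat W}_h$ vanishes identically on $\cA_+(V,h)$ while its Hessian is strictly positive definite on the transverse directions $\wedge^4_{27} V^\ast$, a direct second-order Taylor expansion (using that the cubic remainder in \eqref{WhExp} is controlled by $|q|_h^3$) produces a neighborhood of $\Phi$ in the sphere on which ${\hat W}_h\geq 0$, with equality precisely along $\cA_+(V,h)$. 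This gives the desired local minimum. The only subtlety is justifying that the intrinsic Hessian of ${\hat W}_h$ agrees with the ambient restriction, which is legitimate precisely because $\Phi$ is already a critical point of $W_h$ before passage to the sphere; the remaining representation-theoretic content is immediate from Proposition \ref{prop:ddWh}.
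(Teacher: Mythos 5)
Your identification of $T_\Phi S_{\sqrt{14}}(\wedge^4_+ V^\ast)=\wedge^4_7 V^\ast\oplus\wedge^4_{27} V^\ast$, your reduction of the intrinsic Hessian to the ambient one at a critical point of $W_h$, and your use of Proposition \ref{prop:ddWh} to read off that the restricted Hessian vanishes on $\wedge^4_7 V^\ast$ and equals $16\sqrt{14}\,\langle\cdot,\cdot\rangle_h$ on $\wedge^4_{27} V^\ast$ are all correct. However, the closing step -- that ``a direct second-order Taylor expansion\ldots\ produces a neighborhood of $\Phi$ in the sphere on which ${\hat W}_h\geq 0$'' -- does not hold as stated. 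Because the restricted Hessian has a nontrivial kernel $\wedge^4_7 V^\ast$, a Taylor expansion anchored at the single point $\Phi$ cannot yield a local lower bound: for tangent vectors $q=q_7+q_{27}$ whose $\wedge^4_{27}$-component is much smaller than the $\wedge^4_7$-component, the quadratic term $\tfrac{1}{2}\cdot 16\sqrt{14}|q_{27}|^2_h$ is too small to absorb a possibly negative cubic remainder of size $O(|q|_h^3)$. The conclusion does require the \emph{Morse--Bott} structure along all of $\cA_+(V,h)$ -- that ${\hat W}_h$ vanishes identically on $\cA_+(V,h)$ and that its Hessian is uniformly positive-definite on the normal bundle of $\cA_+(V,h)$ inside the sphere -- and then an argument in tubular-neighborhood coordinates (or an invocation of the Morse--Bott lemma). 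You name the required ingredients but package them as a pointwise Taylor estimate, which by itself is insufficient.

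There is also a considerably shorter route, which is what the paper implicitly uses: Corollary \ref{cor:Hessian} has already established that the critical set of the \emph{ambient} potential $W_h$ on $\wedge^4_+ V^\ast\setminus\{0\}$ consists of local minima and coincides with $C_+(V,h)$. A metric $\Spin(7)_+$ form $\Phi$ lies in this set (it is conformal $\Spin(7)_+$ with $|\Phi|_h=\sqrt{14}$), hence is an ambient local minimum of $W_h$. Restricting a function to any submanifold through an ambient local minimum preserves the local minimum, and $\hat{W}_h(\Phi)=W_h(\Phi)=0$ by Theorem \ref{thm:PotentialVh}. Your recomputation of the intrinsic Hessian is more informative -- it pinpoints the restricted Hessian and its kernel explicitly -- but it re-proves Morse--Bott structure already available from Corollary \ref{cor:Hessian} rather than simply restricting it.
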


\noindent
We end this section by pointing out a bound satisfied by $W_h$. Write the potential as: 
\ben
\label{Wf}
W_h(\Phi)=\sqrt{14}f_h(\Phi)+4|\Phi|_h^3~,
\een
where $f_h:\wedge_+^4 V^\ast\rightarrow \R$ is the cubic function defined through: 
\be
f_h(\Phi):=-\frac{1}{3}\langle \Phi, \Phi\diamond_h\Phi\rangle_h=\frac{1}{3} \langle \Phi, \Phi\Delta_2^h \Phi\rangle_h \quad \forall \Phi\in \wedge^4_+ V^\ast 
\ee
Notice that: 
\ben
\label{antisymm}
f(-\Phi)=-f(\Phi)\quad \forall \Phi\in \wedge^4_+ V^\ast 
\een

\begin{lemma}
\label{lemma:f}
For any nonzero self-dual four-form $\Phi$, we have: 
\be
|f_h(\Phi)|\leq \frac{1}{3}|\Phi|_h |\Phi\Delta_2^h \Phi|_h<\frac{\sqrt{14}}{3} |\Phi|_h^3~,
\ee
where the first inequality is saturated if and only if $\Phi$ satisfies the equation:
\ben
\label{Phieq}
\Phi\Delta_2^h \Phi = c \Phi 
\een
for some $c\in \R$. In this case, we have: 
\ben
\label{cval}
c=\frac{3f(\Phi)}{|\Phi|_h^2} 
\een
\end{lemma}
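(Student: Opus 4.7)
The statement has three substantive components, each of which reduces quickly to a tool already available in the excerpt. The first inequality $|f_h(\Phi)|\leq \tfrac{1}{3}|\Phi|_h\,|\Phi\Delta_2^h\Phi|_h$ is simply the Cauchy--Schwarz inequality applied to the vectors $\Phi$ and $\Phi\Delta_2^h\Phi$ in the Euclidean space $(\wedge^4 V^\ast,\langle\cdot,\cdot\rangle_h)$, using the defining formula $f_h(\Phi)=\tfrac{1}{3}\langle \Phi,\Phi\Delta_2^h\Phi\rangle_h$. Note that by \eqref{PhiDeltaPhisd} the polyform $\Phi\Delta_2^h\Phi$ is itself self-dual when $\Phi$ is, so both vectors lie in the same Euclidean space and Cauchy--Schwarz applies cleanly.

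The strict second inequality $|\Phi\Delta_2^h\Phi|_h<\sqrt{14}\,|\Phi|_h^2$ is exactly the bound established in Proposition \ref{prop:bound} for any nonzero self-dual four-form, so I would simply invoke it. Chaining the two inequalities yields $|f_h(\Phi)|<\tfrac{\sqrt{14}}{3}|\Phi|_h^3$, as claimed.

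For the equality case in the first inequality, Cauchy--Schwarz in an inner product space is saturated if and only if the two vectors are linearly dependent. Since $\Phi$ is nonzero by hypothesis, this forces $\Phi\Delta_2^h\Phi=c\,\Phi$ for a unique $c\in\R$ (the case $\Phi\Delta_2^h\Phi=0$ corresponds to $c=0$, and the equality $|f_h(\Phi)|=0$ still holds). Conversely, if $\Phi\Delta_2^h\Phi=c\,\Phi$ then
\begin{equation*}
f_h(\Phi)=\tfrac{1}{3}\langle\Phi,\Phi\Delta_2^h\Phi\rangle_h=\tfrac{c}{3}\,|\Phi|_h^2,
\end{equation*}
so $|f_h(\Phi)|=\tfrac{1}{3}|c||\Phi|_h^2=\tfrac{1}{3}|\Phi|_h\,|c\Phi|_h=\tfrac{1}{3}|\Phi|_h\,|\Phi\Delta_2^h\Phi|_h$, confirming saturation. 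The same computation yields the formula $c=3f_h(\Phi)/|\Phi|_h^2$.

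There is no real obstacle here: the only conceptual input is knowing to apply Cauchy--Schwarz to $(\Phi,\Phi\Delta_2^h\Phi)$ and to recognize that Proposition \ref{prop:bound} supplies the strict half of the two-sided bound. The only mild subtlety is that one must note $\Phi\Delta_2^h\Phi\in\wedge^4_+V^\ast$ (from \eqref{PhiDeltaPhisd}) to legitimately pair it against $\Phi$ in the self-dual inner product, but this is immediate from the cited identity.
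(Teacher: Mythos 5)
Your proof is correct and follows essentially the same approach as the paper's: Cauchy--Schwarz for the first inequality, Proposition \ref{prop:bound} for the strict second inequality, and linear dependence of $\Phi$ and $\Phi\Delta_2^h\Phi$ for the equality case. You are somewhat more careful than the paper in spelling out both directions of the "if and only if" in the saturation criterion and in noting that $\Phi\neq 0$ forces the linear dependence to take the specific form $\Phi\Delta_2^h\Phi = c\Phi$ rather than the degenerate alternative.
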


\begin{proof}
By the Cauchy-Schwarz inequality and Proposition \ref{prop:bound}, we have:  
\be
|f_h(\Phi)|=\frac{1}{3}|\langle \Phi, \Phi\Delta_2^h \Phi\rangle_h|\leq \frac{1}{3}|\Phi|_h |\Phi\Delta_2^h \Phi|_h<\frac{\sqrt{14}}{3} |\Phi|_h^3~,
\ee
where the first inequality is saturated when: 
\ben
\label{Phieq0}
\Phi\Delta_2^h \Phi =c \Phi
\een
for some $c\in \R$. Taking the scalar product with $\Phi$ in both sides of this equation gives \eqref{cval}.
\end{proof}

\begin{prop}
\label{prop:Whbound}
For any nonzero self-dual four-form $\Phi$, we have: 
\be
W_h(\Phi)\leq \frac{\sqrt{14}}{3}|\Phi|_h |\Phi\Delta_2^h \Phi|_h+4 |\Phi|_h^3<\frac{26}{3} |\Phi|_h^3~,
\ee
where the first inequality is saturated if and only if $\Phi$ satisfies the conditions of Lemma \ref{lemma:f}.
\end{prop}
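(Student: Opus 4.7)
The plan is to derive both inequalities of the proposition by combining the decomposition \eqref{Wf} of $W_h$ with the two-step bound on $f_h$ already established in Lemma \ref{lemma:f}. Since the analytic content is packaged in that lemma and ultimately in Proposition \ref{prop:bound}, the argument is a short chain of elementary estimates; the only care needed is in tracking when equality can occur.

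First, I would start from $W_h(\Phi) = \sqrt{14}\,f_h(\Phi) + 4|\Phi|_h^3$, the decomposition recorded in \eqref{Wf}. Since $\sqrt{14}>0$, the trivial estimate $f_h(\Phi)\le |f_h(\Phi)|$ together with the Cauchy--Schwarz step of Lemma \ref{lemma:f} gives
\[
W_h(\Phi)\;\le\;\sqrt{14}\,|f_h(\Phi)| + 4|\Phi|_h^3\;\le\;\frac{\sqrt{14}}{3}\,|\Phi|_h\,|\Phi\Delta_2^h\Phi|_h + 4|\Phi|_h^3,
\]
which is the first claimed inequality.

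For the strict second inequality, I would invoke the strict bound from Lemma \ref{lemma:f}, namely $\tfrac{1}{3}|\Phi|_h|\Phi\Delta_2^h\Phi|_h < \tfrac{\sqrt{14}}{3}|\Phi|_h^3$ for nonzero self-dual $\Phi$; this is where submultiplicativity of the Frobenius norm, that is, Proposition \ref{prop:bound}, actually enters. Multiplying by $\sqrt{14}$ and adding $4|\Phi|_h^3$ yields
\[
\frac{\sqrt{14}}{3}\,|\Phi|_h\,|\Phi\Delta_2^h\Phi|_h + 4|\Phi|_h^3 \;<\; \frac{14}{3}|\Phi|_h^3 + 4|\Phi|_h^3 \;=\; \frac{26}{3}|\Phi|_h^3,
\]
closing the second inequality.

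Finally, for the saturation statement, I would trace through the two estimates of the first inequality: equality holds iff both $f_h(\Phi) = |f_h(\Phi)|$ (equivalently $f_h(\Phi)\ge 0$) and the Cauchy--Schwarz step of Lemma \ref{lemma:f} is saturated, i.e.\ iff $\Phi\Delta_2^h\Phi = c\Phi$ for some $c\in\R$; by \eqref{cval} this $c$ has the same sign as $f_h(\Phi)$, so the saturation locus is exactly the set where $\Phi$ satisfies the conditions of Lemma \ref{lemma:f} (with the induced $c\ge 0$). There is no genuine obstacle in any step, since the hard analytic input has already been done; the only subtle point is keeping the signs straight when passing from $|f_h(\Phi)|$ back to $f_h(\Phi)$ in the equality-case analysis.
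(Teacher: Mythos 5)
The paper gives no explicit proof of this proposition --- it appears directly after Lemma~\ref{lemma:f}, so the intended argument is precisely the one you give: combine the decomposition \eqref{Wf} with the two estimates of Lemma~\ref{lemma:f} (the Cauchy--Schwarz step, then the strict bound from Proposition~\ref{prop:bound}), and track equality. Your argument is correct and is the only sensible route.

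One point is worth highlighting: your equality-case analysis actually \emph{refines} the statement of the proposition. As written, the proposition says the first inequality saturates iff ``$\Phi$ satisfies the conditions of Lemma~\ref{lemma:f},'' i.e.\ $\Phi\Delta_2^h\Phi = c\Phi$ for some $c\in\R$. But as you correctly note, one must also have $f_h(\Phi)\ge 0$, equivalently (by \eqref{cval}) $c\ge 0$, in order for the step $\sqrt{14}f_h(\Phi)\le\sqrt{14}|f_h(\Phi)|$ to be an equality. This matters: a conformal $\Spin(7)_+$ form satisfies $\Phi\Delta_2^h\Phi = -\tfrac{12}{\sqrt{14}}|\Phi|_h\Phi$, so $c<0$, and indeed for such $\Phi$ one has $W_h(\Phi)=0$ while the middle expression equals $8|\Phi|_h^3>0$, so the first inequality is strict. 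The saturation condition should therefore read ``$\Phi\Delta_2^h\Phi = c\Phi$ for some $c\ge 0$,'' and your proof makes this explicit where the paper's formulation glosses over it.
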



\subsection{Metric deformations of the potential}
\label{sec:algebraicmetricdef}


In this section, we study the expression in the right hand side of \eqref{eq:cubicfunction} as a function of pairs $(h,\Phi)$, where $h$ is an Euclidean metric on $V$ and $\Phi\in \wedge^4 V^{\ast}$ is a four-form that need not be self-dual with respect to $h$. We thus consider the function:
\ben
\label{eq:metriccubicfunction}
W\colon \Met(V)\times\wedge^4 V^{\ast} \rightarrow \mathbb{R}\, ,\quad (h,\Phi) \mapsto  W(h,\Phi) :=\frac{\sqrt{14}}{3}\langle \Phi\Delta_2^h\Phi , \Phi \rangle_h + 4 \langle \Phi , \Phi \rangle_h^{\frac{3}{2}}~, 
\een

\noindent
where $\Met(V)$ denotes the cone of Euclidean metrics on $V$. 

\begin{definition}
A pair $(h,\Phi)\in \Met(V)\times\wedge^4 V^{\ast}$ is called \emph{self-dual} if $\Phi$ is self-dual relative to $h$. A pair $(h,\Phi)\in \Met(V)\times\wedge^4 V^{\ast}$ is called a \emph{conformal $\Spin(7)_+$ pair} on the oriented eight-dimensional space $V$ if $\Phi$ is  conformal $\Spin(7)_+$ form on the oriented Euclidean vector space $(V,h)$.
\end{definition}

\noindent
We denote by:
\begin{equation*}
\dd_{(h,\Phi)} W\colon V^{\ast}\odot V^{\ast} \oplus \wedge^4 V^{\ast} \to \mathbb{R}
\end{equation*}

\noindent
the differential of $W$ at the point $(h,\Phi)\in \Met(V)\times \wedge^4 V^{\ast}$. 

\begin{lemma}
\label{lemma:metricdeformations}
Let $(h,\Phi)\in \Met(V)\times \wedge^4 V^{\ast}$ be a conformal Spin(7) structure on $V$. Then:
\begin{equation*}
(\dd_{(h,\Phi)} W)(k,0) = 0
\end{equation*}

\noindent
for every $k\in V^{\ast}\odot V^{\ast}$.
\end{lemma}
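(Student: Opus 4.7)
The plan is to avoid direct computation by exploiting symmetry. The key observation is that $W$ is $\GL_+(V)$-equivariant in the sense that $W(g^\ast h, g^\ast \Phi) = W(h,\Phi)$ for every $g \in \GL_+(V)$, since the scalar product $\langle\cdot,\cdot\rangle_h$ and the generalized product $\Delta_2^h$ both transform naturally under pullback of forms. Let $G \subset \GL_+(V)$ denote the $\Spin(7)$-stabilizer of $\Phi$, which (by the preliminaries) preserves the induced metric $h_\Phi = \alpha_\Phi h$ and hence also preserves $h$. Restricting the identity above to $g \in G$ gives $W(g^\ast h, \Phi) = W(h, \Phi)$, so the function $h' \mapsto W(h',\Phi)$ on $\Met(V)$ is $G$-invariant and fixes $h$.

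Consequently, the linear functional $k \mapsto (\dd_{(h,\Phi)}W)(k,0)$ on $T_h\Met(V) \cong V^\ast \odot V^\ast$ is $G$-invariant. Since the $\Spin(7)$-representation $V$ (the spin representation inside $\SO(8)$) is irreducible, one has the decomposition $V^\ast \odot V^\ast = \R\, h \oplus S^2_0 V^\ast$ into a trivial line and an irreducible $35$-dimensional summand. By Schur's lemma the space of $G$-invariant linear functionals on $V^\ast \odot V^\ast$ is one-dimensional, spanned by the trace $\mathrm{tr}_h \colon k \mapsto h^{ij}k_{ij}$. Therefore there exists $c \in \R$ such that
\[
(\dd_{(h,\Phi)}W)(k,0) = c\,\mathrm{tr}_h(k) \qquad \forall k \in V^\ast \odot V^\ast.
\]

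It remains to pin down $c$ by testing against the direction $k = h$. The expressions $\langle \Phi, \Phi\rangle_h$ and $\langle \Phi \Delta_2^h \Phi, \Phi\rangle_h$ are polynomial in $h^{-1}$ of degrees $4$ and $6$ respectively, so a quick scaling check gives $W(\lambda h, \Phi) = \lambda^{-6} W(h,\Phi)$ for all $\lambda > 0$. Differentiating at $\lambda = 1$ yields $(\dd_{(h,\Phi)}W)(h, 0) = -6\, W(h, \Phi)$, while $\mathrm{tr}_h(h) = 8$, so $c = -\tfrac{3}{4}W(h,\Phi)$. Since $(h,\Phi)$ is a conformal $\Spin(7)$ pair, Theorem \ref{thm:PotentialVh} applied to $W_h(\Phi) = W(h,\Phi)$ gives $W(h,\Phi) = 0$, hence $c = 0$ and the differential vanishes on all of $V^\ast \odot V^\ast$.

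The main step that needs care is the Schur argument: one must justify that the $8$-dimensional $\Spin(7)$-representation on $V$ (which is the irreducible spin representation when $\Spin(7)$ sits inside $\SO(V,h_\Phi) \cong \SO(8)$) produces an irreducible complement to the line $\R h$ in $V^\ast \odot V^\ast$. This is a standard representation-theoretic fact but is the lynchpin of the argument; once it is in place, the rest follows by the invariance/scaling combination without any index manipulation.
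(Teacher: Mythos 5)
Your proof is correct, and it takes a genuinely different and more conceptual route than the paper's. The paper's own proof is a direct index computation: it writes out $W(h,\Phi)$ in full tensorial form, differentiates in $h$, and then collapses the result using the Karigiannis contraction identities for $\Spin(7)$ forms,
\[
\Phi_{i_1 i_2 i_3 i_4}\Phi_{j_1 j_2 j_3 j_4}h_\Phi^{i_3 j_3}h_\Phi^{i_4 j_4}
= 6\,(h_\Phi)_{i_1 j_1}(h_\Phi)_{i_2 j_2} - 6\,(h_\Phi)_{i_1 j_2}(h_\Phi)_{i_2 j_1} - 4\,\Phi_{i_1 i_2 j_1 j_2}
\]
and
$\Phi_{i_1 i_2 i_3 i_4}\Phi_{j_1 j_2 j_3 j_4}h_\Phi^{i_2 j_2}h_\Phi^{i_3 j_3}h_\Phi^{i_4 j_4} = 42\,(h_\Phi)_{i_1 j_1}$,
after which the two surviving terms cancel. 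You replace this computation by three soft observations: (i) naturality of $\Delta_2^h$ and $\langle\cdot,\cdot\rangle_h$ under pull-back makes $W$ equivariant under $\GL_+(V)$, so the metric differential at a $\Spin(7)$ pair is $\Spin(7)$-invariant; (ii) $S^2_0 V^\ast$ is irreducible as a $\Spin(7)$-module (because the restriction of the vector representation of $\SO(8)$ to $\Spin(7)$ is the real spin representation $\Delta_7$, and $S^2_0\Delta_7\cong V_{35}$), so by Schur the invariant functional is a multiple of $\tr_h$; and (iii) the scaling $W(\lambda h,\Phi)=\lambda^{-6}W(h,\Phi)$ together with $\tr_h(h)=8$ pins the multiple down to $-\tfrac{3}{4}W(h,\Phi)$, which vanishes by Theorem \ref{thm:PotentialVh}. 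The two proofs trade different inputs: the paper uses the (nontrivial but tabulated) $\Spin(7)$ contraction identities, whereas you use the (also nontrivial but standard) irreducibility of $S^2_0\Delta_7$. Your argument has the advantage of being coordinate-free and of making transparent why the two terms in the paper's computation must cancel: any $\Spin(7)$-invariant linear functional on $V^\ast\odot V^\ast$ has to be proportional to $\tr_h$, and the proportionality constant is a multiple of $W(h,\Phi)$, which vanishes on the critical locus. One small stylistic note: you write that the decomposition $V^\ast\odot V^\ast = \R h\oplus S^2_0 V^\ast$ into a trivial and an irreducible piece follows ``since the $\Spin(7)$-representation $V$ is irreducible''; irreducibility of $V$ alone does not imply irreducibility of $S^2_0 V$ (the adjoint of $\SU(3)$ is a counterexample), and you are right to flag this as the lynchpin --- it is better to state it as the specific fact $S^2_0\Delta_7\cong V_{35}$, which can be read off from $\Delta_7\otimes\Delta_7 \cong \R\oplus V_7\oplus V_{21}\oplus V_{35}$ with $\wedge^2\Delta_7\cong V_7\oplus V_{21}$ (the restriction of $\mathfrak{so}(8)$ to $\Spin(7)$).
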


\begin{proof}
First, recall that by Equation \eqref{PhiDeltaomega} we have:
\be
\Phi\Delta_2^h \Phi=\frac{1}{8}\Phi_{ijmn}\Phi^{mn}_{\,\,\,\,\,\,\,\,\,\, kl} e^i\wedge e^j\wedge e^k\wedge e^l
\ee

\noindent
for every $\Phi\in \wedge^4 V^\ast$ and any basis $(e^1,\hdots ,e^8)$. Hence:
\begin{equation*}
\langle \Phi\Delta_2^h\Phi , \Phi \rangle_h = \frac{1}{8}\Phi_{ijmn}\Phi^{mn}_{\,\,\,\,\,\,\,\,\,\, kl} \Phi^{ijkl} = \frac{1}{8} \Phi_{i_1 i_2 i_3 i_4}\Phi_{j_1 j_2 j_3 j_4}\Phi_{k_1 k_2 k_3 k_4} h^{i_1 k_1}h^{i_2 k_2}h^{i_3 j_1}h^{i_4 j_2}h^{j_3 k_3}h^{j_4 k_4}
\end{equation*}

\noindent
as well as:
\be
\vert\Phi\vert^2_h = \frac{1}{4!} \Phi_{ijkl} \Phi^{ijkl} = \frac{1}{4!} \Phi_{i_1 i_2 i_3 i_4} \Phi_{j_1 j_2 j_3 j_4} h^{i_1 j_1}h^{i_2 j_2}h^{i_3 j_3}h^{i_4 j_4}
\ee

\noindent
Therefore:
\be
W(h,\Phi) = \frac{\sqrt{14}}{24} \Phi_{i_1 i_2 i_3 i_4}\Phi_{j_1 j_2 j_3 j_4}\Phi_{k_1 k_2 k_3 k_4} h^{i_1 k_1}h^{i_2 k_2}h^{i_3 j_1}h^{i_4 j_2}h^{j_3 k_3}h^{j_4 k_4} + \frac{4}{(4!)^{\frac{3}{2}}} (\Phi_{i_1 i_2 i_3 i_4} \Phi_{j_1 j_2 j_3 j_4} h^{i_1 j_1}h^{i_2 j_2}h^{i_3 j_3}h^{i_4 j_4})^{\frac{3}{2}} 
\ee 

\noindent
Using this expression for $W(h,\Phi)$, which we have written explicitly in terms of all the metric contractions for clarity in the exposition, a direct computation gives:
\beqa
& (\dd_{(h,\Phi)} W)(k,0) = - \frac{\sqrt{14}}{4} \Phi_{i_1 i_2 i_3 i_4}\Phi_{j_1 j_2 j_3 j_4}\Phi_{k_1 k_2 k_3 k_4} k^{i_1 k_1}h^{i_2 k_2}h^{i_3 j_1}h^{i_4 j_2}h^{j_3 k_3}h^{j_4 k_4} \\
& - \vert\Phi\vert_h  \Phi_{i_1 i_2 i_3 i_4} \Phi_{j_1 j_2 j_3 j_4} k^{i_1 j_1}h^{i_2 j_2}h^{i_3 j_3}h^{i_4 j_4} = - 4^{-1} 14^{-\frac{3}{4}} \vert\Phi\vert_h^{\frac{5}{2}}\Phi_{i_1 i_2 i_3 i_4}\Phi_{j_1 j_2 j_3 j_4}\Phi_{k_1 k_2 k_3 k_4} k^{i_1 k_1}h^{i_2 k_2}_{\Phi} h^{i_3 j_1}_{\Phi} h^{i_4 j_2}_{\Phi} h^{j_3 k_3}_{\Phi} h^{j_4 k_4}_{\Phi}\\
& - 14^{-\frac{3}{4}} \vert\Phi\vert^{\frac{5}{2}}_h  \Phi_{i_1 i_2 i_3 i_4} \Phi_{j_1 j_2 j_3 j_4} k^{i_1 j_1}h^{i_2 j_2}_{\Phi} h^{i_3 j_3}_{\Phi} h^{i_4 j_4}_{\Phi}
\eeqa 

\noindent
where we have used Equation \eqref{hPhi} to relate $h$ and the metric $h_{\Phi}$ induced by $\Phi$. Since $h_{\Phi}$ is induced by $\Phi$ and the latter is a Spin(7) structure, the following well-known identities hold \cite{KarigiannisDefs,KarigiannisFlows}:
\beqa
& \Phi_{i_1 i_2 i_3 i_4} \Phi_{j_1 j_2 j_3 j_4} h^{i_3 j_3}_{\Phi} h^{i_4 j_4}_{\Phi} = 6 (h_{\Phi})_{i_1 j_1} (h_{\Phi})_{i_2 j_2} - 6 (h_{\Phi})_{i_1 j_2} (h_{\Phi})_{i_2 j_1} - 4 \Phi_{i_1 i_2 j_1 j_2}\\
& \Phi_{i_1 i_2 i_3 i_4} \Phi_{j_1 j_2 j_3 j_4} h^{i_2 j_2}_{\Phi} h^{i_3 j_3}_{\Phi} h^{i_4 j_4}_{\Phi} = 42 (h_{\Phi})_{i_1 j_1}
\eeqa

\noindent
Using these identities, we obtain:
\beqa
& (\dd_{(h,\Phi)} W)(k,0)  =  14^{-\frac{3}{4}} 42 \vert\Phi\vert_h^{\frac{5}{2}}  k^{i_1 k_1} (h_{\Phi})_{i_1 j_1} - 14^{-\frac{3}{4}} 42 \vert\Phi\vert^{\frac{5}{2}}_h  k^{i_1 k_1} (h_{\Phi})_{i_1 j_1} = 0
\eeqa 

\noindent
and thus we conclude.
\end{proof}

\noindent
Using this lemma we adapt Theorem \ref{thm:PotentialVh} to the case of the function $W\colon \Met(V)\times\wedge^4 V^{\ast} \rightarrow \mathbb{R}$.

\begin{thm}
\label{thm:MetricPotentialVh}
A pair $(h,\Phi)\in \Met(V)\times \wedge^4 V^{\ast}$ is a conformal Spin(7) structure on $V$ if and only if it is a self-dual critical point of the function $W\colon \Met(V)\times\wedge^4 V^{\ast} \rightarrow \mathbb{R}$. 
\end{thm}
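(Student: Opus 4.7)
The plan is to reduce the statement to the algebraic characterization of conformal $\Spin(7)_+$ forms in Theorem \ref{thm:Spin7algebraic} together with the metric-direction vanishing of Lemma \ref{lemma:metricdeformations}. The only new calculation needed is the form-direction partial derivative of $W$ in an arbitrary tangent direction $q\in \wedge^4 V^\ast$, as opposed to the self-dual $q$ handled in Corollary \ref{cor:dWh}. A direct Taylor expansion of $W(h,\Phi+q)$ using the cyclic identity \eqref{cyc4form} of Lemma \ref{lemma:cyc4form} (valid on pure four-forms without any self-duality hypothesis) yields
\begin{equation*}
(\dd_{(h,\Phi)} W)(0,q)=\bigl\langle \sqrt{14}\,\Phi\Delta^h_2\Phi+12|\Phi|_h\Phi\,,\,q\bigr\rangle_h\quad \forall q\in \wedge^4 V^\ast\, ,
\end{equation*}
for every nonzero $\Phi$.

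Given this formula, both implications follow quickly. For the forward direction, if $(h,\Phi)$ is a conformal $\Spin(7)$ structure then $\Phi\in \wedge^4_+ V^\ast$, so the pair is self-dual; Theorem \ref{thm:Spin7algebraic} gives $\sqrt{14}\,\Phi\Delta^h_2\Phi+12|\Phi|_h\Phi=0$, which through the displayed identity kills $(\dd_{(h,\Phi)}W)(0,q)$ for every $q$, while Lemma \ref{lemma:metricdeformations} kills $(\dd_{(h,\Phi)}W)(k,0)$ for every symmetric $k$. Conversely, if $(h,\Phi)$ is a self-dual critical point with $\Phi\neq 0$, the vanishing of the form partial on all $q\in \wedge^4 V^\ast$ combined with nondegeneracy of $\langle\,,\,\rangle_h$ forces $\sqrt{14}\,\Phi\Delta^h_2\Phi+12|\Phi|_h\Phi=0$; note that this combination is automatically self-dual by \eqref{PhiDeltaPhisd}, consistent with the self-duality of $\Phi$. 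Theorem \ref{thm:Spin7algebraic} then identifies $\Phi$ as a conformal $\Spin(7)_+$ form on $(V,h)$, so $(h,\Phi)$ is a conformal $\Spin(7)$ structure.

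The main obstacle is essentially bookkeeping: extending Corollary \ref{cor:dWh} from self-dual test directions to arbitrary four-form directions, which relies on the observation that the cyclic identity used in the proof of Proposition \ref{prop:d2Phi} holds for any triple of pure four-forms regardless of duality type. The deeper content making the whole argument possible is really Lemma \ref{lemma:metricdeformations}: it ensures that no further constraint beyond the conformal $\Spin(7)$ condition arises from criticality in the metric variable, which is what allows the characterization to be solely in terms of the quadratic equation \eqref{eq:algebraicconditionII} already controlling conformal $\Spin(7)_+$ forms at fixed $h$.
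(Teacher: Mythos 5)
Your argument follows the same path as the paper's proof: reduce form-direction criticality to the algebraic characterization of Theorem~\ref{thm:Spin7algebraic}, and use Lemma~\ref{lemma:metricdeformations} for the metric direction. The one thing you do more carefully is to compute $(\dd_{(h,\Phi)}W)(0,q)$ for \emph{all} $q\in\wedge^4 V^\ast$ rather than only self-dual $q$; the paper cites Corollary~\ref{cor:dWh}, whose formula is stated only for $q\in\wedge^4_+V^\ast$, and silently relies on the fact that $\sqrt{14}\,\Phi\Delta^h_2\Phi+12|\Phi|_h\Phi$ is itself self-dual (via \eqref{PhiDeltaPhisd}) to conclude that its vanishing also kills the derivative in anti-self-dual form directions. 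Your explicit extension of the derivative formula to arbitrary $q$ makes that step transparent rather than implicit. Both treatments tacitly exclude the degenerate critical point $\Phi=0$, consistent with the $\Phi\neq 0$ hypothesis in Theorem~\ref{thm:PotentialVh}.
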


\begin{proof}
Let $(h,\Phi)\in \Met(V)\times \wedge^4 V^{\ast}$ be a self-dual pair. Then, by Corollary \ref{cor:dWh} we have: 
\be
(\dd_{(h,\Phi)} W) (0,q)=\langle\sqrt{14}\Phi\Delta_2^h\Phi + 12 |\Phi|_h \Phi, q\rangle_h \quad \forall q\in \wedge^4_+ V^\ast
\ee

\noindent
Hence, condition $(\dd_{(h,\Phi)} W) (0,q) = 0$ holds for all $q\in \wedge^4_+ V^\ast$ if and only if $\langle\sqrt{14}\Phi\Delta_2^h\Phi + 12 |\Phi|_h \Phi, q\rangle_h = 0$, which by Theorem \ref{thm:Spin7algebraic} is equivalent to $\Phi$ being a conformal Spin(7) structure on $(V,h)$. Since $(g,\Phi)$ is a conformal Spin(7) structure, Lemma \ref{lemma:metricdeformations} implies that $(\dd_{(h,\Phi)} W) (k,0) = 0$ for every $k\in V^{\ast}\odot V^{\ast}$ and thus we conclude.
\end{proof}

\noindent
Hence by considering the variations of $W\colon \Met(V)\times \wedge^4 V^{\ast} \to \mathbb{R}$ with respect to pairs $(h,\Phi)$ we can describe all 
$\Spin(7)_+$ structures on $V$, as opposed to those that are conformal relative to a fixed Euclidean metric. It would be interesting to investigate the geometric significance of those critical points of $W$ which are not self-dual.


\section{Conformal Spin(7) forms on compact eight-manifolds}


In this section, we globalize the algebraic results of the previous sections to an oriented smooth eight-manifold $M$. Given a Riemannian metric $g$ on $M$, we will denote its associated Riemannian volume form by $\nu_g$. Theorem \ref{thm:Spin7algebraic} immediately implies the following result on $(M,g)$.

\begin{cor}
A nowhere-vanishing smooth self-dual four-form $\Phi\in \Omega^4_+(M)$ is a conformal $\Spin(7)_+$ form on $(M,g)$ if and only if it satisfies the degree-two homogeneous equation:
\be
\sqrt{14} \Phi\Delta_2^g \Phi + 12\vert\Phi\vert_g \Phi = 0 
\ee
In this case, the Riemannian metric induced by $\Phi$ is $g_\Phi=14^{-\frac{1}{4}} \vert \Phi\vert_g^{\frac{1}{2}}g$.
\end{cor}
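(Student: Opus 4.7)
The proof is essentially a pointwise reduction to Theorem \ref{thm:Spin7algebraic} and Lemma \ref{lemma:conformalconstant}. The plan is to observe that all of the relevant data are defined fiberwise: the Hodge operator $\ast_g$, the norm $|\cdot|_g$, and the second generalized product $\Delta_2^g$ are constructed pointwise from $g_p$ on each cotangent space $T_p^*M$, and the condition that $\Phi$ be a conformal $\Spin(7)_+$ form on $(M,g)$ means by definition that for every $p\in M$ the fiber value $\Phi_p\in \wedge^4 T_p^*M$ is a conformal $\Spin(7)_+$ form on the oriented Euclidean vector space $(T_pM, g_p)$ in the sense of Definition~\ref{def:Spin7formsII}. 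Since $\Phi$ is nowhere-vanishing, $\Phi_p\neq 0$ for every $p$, so the hypothesis of Theorem~\ref{thm:Spin7algebraic} is satisfied at every point.

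First I would fix $p\in M$ and set $(V,h) := (T_pM, g_p)$, together with the self-dual four-form $\Phi_p \in \wedge_+^4 V^*$. Applying Theorem~\ref{thm:Spin7algebraic} to $\Phi_p$, the form $\Phi_p$ is a conformal $\Spin(7)_+$ form on $(V,h)$ if and only if
\begin{equation*}
\sqrt{14}\,\Phi_p\Delta_2^{g_p}\Phi_p + 12\,|\Phi_p|_{g_p}\,\Phi_p = 0.
\end{equation*}
Since both sides are smooth sections of $\wedge^4 T^*M$ whose values at $p$ are given by the left-hand side above, the pointwise equivalences assemble into the global equivalence claimed in the statement: $\Phi$ is a conformal $\Spin(7)_+$ form on $(M,g)$ if and only if the displayed equation holds identically on $M$.

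For the induced-metric formula, I would apply Lemma~\ref{lemma:conformalconstant} pointwise: at each $p$, the conformal constant of $\Phi_p$ relative to $g_p$ equals $\alpha_{\Phi_p} = 14^{-1/4}|\Phi_p|_{g_p}^{1/2}$, and the metric induced by $\Phi_p$ on $T_pM$ is $(g_\Phi)_p = 14^{-1/4}|\Phi_p|_{g_p}^{1/2}\, g_p$. Varying $p$, this gives the global relation $g_\Phi = 14^{-1/4}|\Phi|_g^{1/2}\, g$ as a smooth Riemannian metric on $M$, with smoothness inherited from the smoothness of $\Phi$ and $g$ together with the nowhere-vanishing assumption.

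There is essentially no obstacle here, since both sides of the equation and the definition of conformal $\Spin(7)_+$ structure are genuinely fiberwise; the only subtlety to mention is that the nowhere-vanishing hypothesis on $\Phi$ is exactly what ensures $\Phi_p\neq 0$ everywhere, hence allows the application of Theorem~\ref{thm:Spin7algebraic} at each point and also guarantees that the conformal factor $14^{-1/4}|\Phi|_g^{1/2}$ is everywhere positive and smooth, so that $g_\Phi$ is a bona fide Riemannian metric.
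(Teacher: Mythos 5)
Your proof is correct and matches the paper's own treatment, which simply states that the corollary follows immediately from Theorem~\ref{thm:Spin7algebraic} (and, for the induced-metric formula, Lemma~\ref{lemma:conformalconstant}) applied fiberwise. The only ingredient you flag beyond the bare citation --- that the nowhere-vanishing hypothesis is what allows the pointwise application of the theorem and guarantees $g_\Phi$ is a genuine metric --- is exactly the right thing to make explicit.
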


\noindent 
Let $\wedge^4_+ T^\ast M$ be the vector bundle of self-dual four-forms on $(M,g)$, whose space of sections we denote by $\Omega^4_+(M)$. The potential $W_h$ of Section \ref{sec:Wh} globalizes to a function $W_g\colon \wedge^4_+ T^\ast M \rightarrow \R$ which vanishes on the zero section and restricts on each fiber to a function which is homogeneous of degree three. The vertical critical locus of $W_g$ defines a bundle of cones $C_+(M,g)\subset \wedge^4_+ T^{\ast}M$ that is fiber-wise isomorphic to $C_+(V,h)$. The conformal $\Spin(7)_+$ forms on $(M,g)$ are sections of the bundle of pointed cones $C_+(M,g)\backslash \left\{ 0\right\}$ obtained from $C_+(M,g)$ by removing the zero section of $\wedge^4_+ T^\ast M$.

For the remainder of this section, we assume that $M$ is compact and without boundary. Consider the following functional defined on the Fréchet space $\Omega^4_{+}(M)$ of smooth self-dual four-forms over $(M,g)$, which is the natural global version of the cubic potential \eqref{eq:cubicfunction}:
\be
\cW_g \colon  \Omega^4_{+}(M) \to \mathbb{R}\, ,\quad \Phi \to \cW_g(\Phi) := \int_M  \left[\frac{\sqrt{14}}{3}\langle \Phi\Delta_2^g \Phi , \Phi \rangle_g + 4 \langle \Phi , \Phi \rangle^{\frac{3}{2}}_g\right] \nu_g~. 
\ee
Here $\langle~,~\rangle_g$ is the fiberwise Euclidean pairing induced by $g$ on $\wedge T^\ast M$. Since the restriction of the integrand of $W_g$ to the fibers of the vector bundle $\wedge_+ T^\ast M$ of self-dual four-forms coincides with the cubic potential of the previous section, Theorem \ref{thm:PotentialVh} implies the following:

\begin{cor}
A smooth nowhere-vanishing self-dual four-form is a smooth conformal $\Spin(7)_+$ form on $(M,g)$ if and only if it is a critical point of 
the functional $\cW_g \colon  \Omega^4_{+}(M) \to \mathbb{R}$. Moreover, $\cW_g$ vanishes when evaluated on any smooth conformal $\Spin(7)_+$ form. 
\end{cor}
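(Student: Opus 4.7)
The plan is to reduce the global variational problem to the already-established pointwise algebraic characterization of Theorem \ref{thm:PotentialVh}. First I would compute the first variation of $\cW_g$ along a smooth self-dual perturbation $q\in \Omega^4_+(M)$ by differentiating under the integral sign. Since the integrand of $\cW_g$ at a point $x\in M$ is precisely the pointwise potential $W_{g_x}$ evaluated on $\Phi(x)$, Corollary \ref{cor:dWh} applied fiberwise gives
\begin{equation*}
(\dd_\Phi \cW_g)(q) \;=\; \int_M \bigl\langle \sqrt{14}\,\Phi \Delta_2^g \Phi + 12\,|\Phi|_g\,\Phi,\; q \bigr\rangle_g \,\nu_g.
\end{equation*}
For this formula to make sense globally I need $|\Phi|_g$ to be smooth, which is exactly guaranteed by the nowhere-vanishing hypothesis.

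For the ``if'' direction, suppose $\Phi$ is a smooth conformal $\Spin(7)_+$ form on $(M,g)$. Then at each point $\Phi(x)$ satisfies the algebraic equation \eqref{eq:algebraicconditionII} by Theorem \ref{thm:Spin7algebraic}, so the integrand of the first variation vanishes identically and $\Phi$ is a critical point of $\cW_g$. The same pointwise argument combined with the identity $W_h(\Phi)=0$ from Theorem \ref{thm:PotentialVh} shows that the integrand of $\cW_g$ itself vanishes identically, giving $\cW_g(\Phi)=0$.

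For the ``only if'' direction, suppose $\Phi\in \Omega^4_+(M)$ is smooth, nowhere-vanishing, and critical for $\cW_g$. The combination $E := \sqrt{14}\,\Phi \Delta_2^g \Phi + 12\,|\Phi|_g\,\Phi$ is a smooth section of $\wedge^4_+T^\ast M$: self-duality follows from \eqref{PhiDeltaPhisd} applied fiberwise, while smoothness of $|\Phi|_g=\sqrt{\langle \Phi,\Phi\rangle_g}$ uses the nowhere-vanishing hypothesis. Plugging the admissible test variation $q=E\in \Omega^4_+(M)$ into the first variation formula gives $\int_M |E|_g^2\,\nu_g = 0$, so $E=0$ pointwise, and then by Theorem \ref{thm:Spin7algebraic} each $\Phi(x)$ is a conformal $\Spin(7)_+$ form on $(T_xM,g_x)$. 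There is no substantial obstacle: the only delicate point is the smoothness of $|\Phi|_g$, which is precisely why the nowhere-vanishing assumption appears in the statement; beyond that, the argument is a routine fiberwise application of the already-established algebraic results together with the fundamental lemma of the calculus of variations.
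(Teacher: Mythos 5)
Your proposal is correct and takes essentially the same route as the paper, which simply observes in the sentence preceding the corollary that the integrand of $\cW_g$ restricts fiberwise to the pointwise potential of Theorem \ref{thm:PotentialVh} and thereby inherits its critical-point characterization. You supply the standard calculus-of-variations details that the paper leaves implicit: the first-variation formula obtained by differentiating under the integral sign (valid since the nowhere-vanishing hypothesis keeps $|\Phi|_g$ smooth), and the identification of the Euler--Lagrange term $E=\sqrt{14}\,\Phi\Delta_2^g\Phi+12|\Phi|_g\Phi$ as an admissible self-dual test section via \eqref{PhiDeltaPhisd}, so that testing against $E$ itself forces $E=0$ pointwise; this last step is a slightly slicker substitute for invoking the fundamental lemma directly, but the argument is the same in substance.
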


\noindent
As in Subsection \ref{sec:algebraicmetricdef}, we extend $\cW_g$ to a functional $\cW \colon \Met(M)\times \Omega^4(M) \to \mathbb{R}$ defined on arbitrary pairs $(g,\Phi)\in \Met(M)\times \Omega^4(M)$ by the same formula:
\ben
\label{eq:potentialmetricspin7}
\cW(g,\Phi) := \int_M  \left[\frac{\sqrt{14}}{3}\langle \Phi\Delta_2^g \Phi , \Phi \rangle_g + 4 \langle \Phi , \Phi \rangle^{\frac{3}{2}}_g\right] \nu_g~, 
\een

\noindent
where $\Met(M)$ denotes the convex cone of Riemannian metrics on $M$. 
\begin{definition}
A pair $(g,\Phi)\in \Met(M)\times\Omega^4(M)$ is called \emph{self-dual} if the four-form $\Phi$ is self-dual with respect to the metric $h$. A pair $(g,\Phi)\in \Met(M)\times\Omega^4(M)$ is called a \emph{conformal $\Spin(7)_+$ pair} on $M$ if $\Phi$ is a conformal $\Spin(7)_+$ form on the Riemannian manifold $(M,g)$.
\end{definition}

\noindent
Theorem \ref{thm:MetricPotentialVh} implies the following:

\begin{cor}
\label{cor:MetricPotentialVh}
Suppose that the eight-manifold $M$ is compact and spin. Then a self-dual pair $(g,\Phi)\in \Met(M)\times \Omega^4(M)$ is a conformal $\Spin(7)_+$ pair on $M$ if and only if it is a critical point of the functional $\cW\colon \Met(M)\times\Omega^4(M) \rightarrow \mathbb{R}$. 
\end{cor}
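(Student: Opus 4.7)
The plan is to transfer the pointwise result of Theorem~\ref{thm:MetricPotentialVh} to the integrated setting. Since the integrand of $\cW$ coincides with the algebraic function $W$ of Section~\ref{sec:algebraicmetricdef} multiplied by the Riemannian volume form $\nu_g$, the entire proof reduces to organizing the first variation correctly and invoking the fundamental lemma of the calculus of variations.

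For the direct implication, suppose $(g,\Phi)$ is a conformal $\Spin(7)_+$ pair. By Theorem~\ref{thm:PotentialVh} applied fiberwise, the integrand vanishes identically: $W(g_x,\Phi_x)=0$ for every $x\in M$. Consequently, when I take an arbitrary variation $(k,q)\in C^\infty(S^2T^*M)\oplus\Omega^4(M)$, the contribution of the variation of $\nu_g$ drops out and
\begin{equation*}
\delta \cW|_{(g,\Phi)}(k,q)=\int_M (\dd_{(g_x,\Phi_x)} W)(k_x,q_x)\,\nu_g.
\end{equation*}
Pointwise, Lemma~\ref{lemma:metricdeformations} gives $(\dd_{(g,\Phi)} W)(k,0)=0$, while Corollary~\ref{cor:dWh} combined with Theorem~\ref{thm:Spin7algebraic} gives $(\dd_{(g,\Phi)} W)(0,q_+)=0$ on the self-dual part of $q$. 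The anti-self-dual part $q_-$ contributes zero automatically, since both $\Phi$ and $\Phi\Delta_2^g\Phi$ are self-dual (the latter by \eqref{PhiDeltaPhisd}) and hence $\langle\cdot,\cdot\rangle_g$-orthogonal to $\wedge^4_- T^*M$. Summing these contributions yields $\delta\cW|_{(g,\Phi)}\equiv 0$.

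For the converse, suppose $(g,\Phi)$ is a self-dual critical point. Restricting to variations $(0,q)$ with $q\in\Omega^4(M)$ arbitrary, Corollary~\ref{cor:dWh} gives
\begin{equation*}
\int_M \bigl\langle \sqrt{14}\,\Phi\Delta_2^g\Phi + 12\,|\Phi|_g\,\Phi,\, q \bigr\rangle_g\,\nu_g = 0.
\end{equation*}
Since the Euler--Lagrange density $\eta:=\sqrt{14}\,\Phi\Delta_2^g\Phi+12|\Phi|_g\,\Phi$ is itself a smooth four-form (in fact self-dual, by \eqref{PhiDeltaPhisd}), setting $q:=\eta$ in the identity above gives $\int_M |\eta|_g^2\,\nu_g=0$, so that $\eta$ vanishes identically on $M$. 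Applying Theorem~\ref{thm:Spin7algebraic} fiberwise on the open set where $\Phi\neq 0$ then shows that $\Phi$ is a conformal $\Spin(7)_+$ form there, which is the desired conclusion.

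The only subtlety to be careful about is the variation of the Riemannian volume form: generically $\delta\nu_g$ contributes an extra term of the form $\tfrac{1}{2}\int_M W(g,\Phi)\,\tr_g(k)\,\nu_g$ to $\delta\cW$, but at a conformal $\Spin(7)_+$ pair this term vanishes identically because $W(g,\Phi)=0$ pointwise, which is precisely what allows the fiberwise Lemma~\ref{lemma:metricdeformations} to be used without any additional global analysis. All the genuine work has already been done at the algebraic level, and the compactness and absence of boundary of $M$ enter only through the legitimacy of the integration by parts in the fundamental lemma.
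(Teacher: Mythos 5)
Your proof is essentially correct and takes the same approach the paper implicitly uses (the paper states the corollary as an immediate consequence of Theorem~\ref{thm:MetricPotentialVh} without supplying details, and you supply exactly those details by applying the pointwise variational identities under the integral and concluding with the standard non-degeneracy argument). Two minor imprecisions are worth flagging. First, the closing remark that compactness enters ``through the legitimacy of the integration by parts'' is misleading: the Euler--Lagrange density $\eta=\sqrt{14}\,\Phi\Delta_2^g\Phi+12|\Phi|_g\Phi$ is purely algebraic in $\Phi$ and $g$ (no derivatives appear), so there is no integration by parts anywhere; compactness serves only to ensure that $\cW$ is finite and that the test variation $q:=\eta$ is admissible. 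Second, your converse only establishes that $\Phi$ is a conformal $\Spin(7)_+$ form on the open set $\{\Phi\neq 0\}$, whereas the statement requires $\Phi$ to be nowhere-vanishing (witness $\Phi\equiv 0$, which is a self-dual critical point of $\cW$ but not a conformal $\Spin(7)_+$ pair); the same implicit nowhere-vanishing hypothesis appears in the paper's Theorem~\ref{thm:MetricPotentialVh}, so you are faithful to the source, but the conclusion ``which is the desired conclusion'' overstates what the argument alone yields. Your observations about the $W(g,\Phi)=0$ identity killing the $\delta\nu_g$ term and about the anti-self-dual variations being automatically orthogonal to $\eta$ via \eqref{PhiDeltaPhisd} are both correct and worth making explicit; in fact, since $\eta=0$ pointwise at a conformal $\Spin(7)_+$ pair, the self-dual/anti-self-dual split is not even needed for the direct implication, though it is a clean way to see why Corollary~\ref{cor:dWh} (stated for self-dual $q$) suffices.
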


\noindent
Corollary \ref{cor:MetricPotentialVh} gives a variational description of all $\Spin(7)_+$ structures on a compact spin manifold, modulo the self-duality condition that needs to be checked independently. In particular, every $\Spin(7)_+$ form $\Phi$ identifies canonically with a point of the set $\Crit_{+}(\cW)$ of self-dual pairs which are critical points of $\cW$ as follows:
\begin{equation*}
\Phi \mapsto (g_{\Phi}, \Phi) \in \Crit_{+}(\cW)
\end{equation*}

\noindent
where $g_{\Phi}$ is the Riemannian defined by $\Phi$ on $M$. It would be interesting to study the geometric significance of the non-self-dual critical points of $\cW$, which might provide a weakening of the notion of $\Spin(7)_+$ structure on $M$.

\begin{remark}
Of course, there is no assurance in general that any smooth conformal $\Spin(7)_+$ structure exists on $(M,g)$ --- and hence the critical locus of $\cW_g$ or $\cW$ may be empty. In fact, it is well-known that the following statements are equivalent for a compact and spin Riemannian eight-manifold  $(M,g)$ whose half spinor bundles we denote by $S^+$ and $S^-$ (see \cite[Theorem 10.7]{LM}):
\begin{itemize}
\item $M$ admits a $\Spin(7)$ form.
\item $M$ is spin and either $\chi(S^{+}) = 0$ or $\chi(S^{-}) = 0$.
\item The following relation holds for an appropriate choice of orientation on $M$:
\begin{equation}
\label{eq:topologicalcondition}
p_1(M)^2 - 4 p_2 (M) + 8 \chi(M) = 0~,
\end{equation}
where $\chi(M)$ is the Euler class of $M$, $\chi(S^{\pm})$ is the Euler class of $S^{\pm}$, and $p_1(M)$ is the first Pontryagin class of $M$. 
\end{itemize}
It follows that the existence of critical points of the functional $\cW_g$ are obstructed by the topological condition \eqref{eq:topologicalcondition}. 
\end{remark}

We note that the potential $\cW_g$ extends naturally to a function defined on the Banach space $\L^3(\wedge^4_+ T^\ast M)$ of self-dual cubic integrable four-forms on $(M,g)$. This follows by writing $\cW_g$ as: 
\ben
\label{cWdiamond}
\cW_g(\Phi)= \int_M  \left[\frac{\sqrt{14}}{3}\langle \Phi\Delta_2^h\Phi, \Phi \rangle_g + 4 |\Phi|^3_g\right] \nu_g
\een
and noticing that the Cauchy-Schwarz inequality and Corollary \ref{cor:normdiamond} imply: 
\be
|\langle \Phi\Delta_2^h\Phi, \Phi\rangle_g|_g\leq \sqrt{16}|\Phi|_g^3 
\ee
Using the generalized H\"{o}lder inequality, it follows that the integrands of the two terms appearing in \eqref{cWdiamond} are absolutely integrable functions (elements of $\L^1(M,g)$) provided that $\Phi\in \L^3(\wedge^4_+ T^\ast M)$. 

We end this section by considering a natural coupling of the functional \eqref{eq:potentialmetricspin7} to the Einstein-Hilbert action. Consider the following functional which depends on an arbitrary constant $\lambda\in \mathbb{R}$:
\ben
\label{eq:Einsteinpotential}
\cL_{\lambda}(g,\Phi) := \int_M  \left[ s^g + \frac{\sqrt{14}}{3} \lambda \langle \Phi\Delta_2^h\Phi , \Phi \rangle_g + 4 \lambda |\Phi|^3_g\right] \nu_g~.
\een

\noindent
Here $s^g$ is the scalar curvature of the Riemannian metric $g\in \Met(M)$ and $\Phi\in \Omega^4(M)$ is any four-form. Lemma \ref{lemma:metricdeformations}, implies the following characterization of the self-dual critical points of $\cL_{\lambda}$.

\begin{prop}
\label{prop:EinsteinSpin7}
A self-dual pair $(g,\Phi)\in \Met(M)\times \Omega^4(M)$ is a critical point of the functional:
\be
\cL_{\lambda} \colon \Met(M)\times \Omega^4(M) \to \mathbb{R}
\ee

\noindent
if and only if the metric $g$ is Ricci flat and $\Phi$ is a conformal $\Spin(7)_+$ structure on $(M,g)$.
\end{prop}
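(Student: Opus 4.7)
The plan is to split the first variation of $\cL_\lambda$ at a self-dual pair $(g,\Phi)$ into its two independent components: a four-form variation $q\in\Omega^4(M)$ with the metric fixed, and a metric variation $k\in\Gamma(S^2 T^\ast M)$ with $\Phi$ fixed. Because the Einstein-Hilbert term carries no $\Phi$-dependence, the $q$-variation is governed entirely by $\lambda\cW$, and I would use it first to isolate the $\Spin(7)_+$ condition before handling the metric variation. Throughout I assume $\lambda\neq 0$, which is implicit for the proposition to have content.

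For the $q$-variation, the pointwise calculation underlying Corollary \ref{cor:dWh}, integrated over $M$, gives
$$\dd_{(g,\Phi)}\cL_\lambda(0,q) \;=\; \lambda\int_M \bigl\langle \sqrt{14}\,\Phi\Delta_2^g \Phi + 12\,|\Phi|_g\,\Phi,\; q\bigr\rangle_g\,\nu_g$$
for every $q\in\Omega^4(M)$. Requiring this to vanish for all $q$ yields the pointwise algebraic equation $\sqrt{14}\,\Phi\Delta_2^g \Phi + 12\,|\Phi|_g\,\Phi = 0$, which by Theorem \ref{thm:Spin7algebraic} is equivalent to $\Phi$ being a conformal $\Spin(7)_+$ form on $(M,g)$. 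Note that the bracketed four-form is automatically self-dual by \eqref{PhiDeltaPhisd}, so testing against self-dual $q$ is equivalent to testing against all $q$.

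For the metric variation, I would combine the standard Einstein-Hilbert formula
$$\dd_g\!\int_M s^g\,\nu_g\,(k) \;=\; \int_M \bigl\langle \tfrac{1}{2}s^g g - \Ric^g,\; k\bigr\rangle_g\,\nu_g$$
with the vanishing of the $\cW$-contribution. Differentiating $\cW(g,\Phi)=\int_M W(g,\Phi)\,\nu_g$ in $g$ produces two pieces: the pointwise derivative $(\dd_g W)(k,0)$ and the term $\tfrac{1}{2}W(g,\Phi)\,\tr_g k$ coming from $\dd_g \nu_g$. Once the preceding step has established that $\Phi$ is conformal $\Spin(7)_+$, Lemma \ref{lemma:metricdeformations} kills the first piece pointwise, and Theorem \ref{thm:PotentialVh} gives $W(g,\Phi)=0$ pointwise, killing the second. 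Thus the full metric variation of $\cL_\lambda$ collapses to the Einstein-Hilbert piece alone; vanishing for all symmetric $k$ forces $\Ric^g=\tfrac{1}{2}s^g g$, and tracing this with $g^{-1}$ together with $\dim M = 8\neq 2$ yields $s^g=0$ and hence $\Ric^g=0$.

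The converse is essentially a re-reading of the same calculation: if $g$ is Ricci flat and $\Phi$ is a conformal $\Spin(7)_+$ form on $(M,g)$, then $\Phi$ is automatically self-dual, the $q$-variation vanishes by Theorem \ref{thm:Spin7algebraic}, and the $k$-variation vanishes because both the Einstein-Hilbert contribution and the $\cW$ contribution vanish separately. The only delicate point is logical rather than technical: Lemma \ref{lemma:metricdeformations} presupposes the conformal $\Spin(7)_+$ condition, so the $q$-variation must be exploited first to establish that condition, and only afterwards may it be used to decouple the Einstein equation from the $\cW$ contribution inside the metric variation. There is no analytical obstacle to worry about since all variations are taken formally on the Fr\'echet space $\Met(M)\times\Omega^4(M)$.
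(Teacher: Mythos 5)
Your proof is correct and follows the same route the paper gestures at: the paper's own ``proof'' of Proposition \ref{prop:EinsteinSpin7} is literally the single sentence citing Lemma \ref{lemma:metricdeformations}, and you have simply supplied the missing bookkeeping — using the $\Phi$-variation (Corollary \ref{cor:dWh} plus Theorem \ref{thm:Spin7algebraic}) to first obtain the conformal $\Spin(7)_+$ condition, then Lemma \ref{lemma:metricdeformations} to kill the pointwise $\dd_g W$ term, Theorem \ref{thm:PotentialVh} (so that $W(g,\Phi)=0$) to kill the $\dd_g\nu_g$ contribution, and the Einstein--Hilbert first-variation formula with $\dim M = 8$ to close the argument. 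Your remarks that $\lambda\neq 0$ must be assumed, that anti-self-dual test forms $q$ contribute nothing because $\Phi\Delta_2^g\Phi$ is again self-dual by \eqref{PhiDeltaPhisd}, and that the logical order ($q$-variation before $k$-variation) matters, are all correct and are exactly the details the paper leaves implicit.
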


\noindent
Exploring this potential in more detail could be interesting, especially in the context of gradient flows of Spin(7) structures. The geometric or topological significance of the non-self-dual critical points of $\cL_{\lambda}$, if any, remains to be ascertained.


\appendix


\end{document}